\documentclass{article}  
\usepackage[mathscr]{euscript}

\setlength{\textwidth}{15.5cm}
\setlength{\textheight}{22.5cm}
\setlength{\topmargin}{-1cm}
\hoffset-1.5cm
\newcommand{\la}{\lambda}
\newcommand{\La}{\Lambda}
\newcommand{\al}{\alpha}
\newcommand{\til}{\tilde}
\newcommand{\ti}{\times}
\newcommand{\ga}{\gamma}
\newcommand{\Ga}{\Gamma}
\newcommand{\Om}{\Omega}
\newcommand{\sig}{\sigma}

\newcommand{\eps}{\epsilon}
\newcommand{\var}{\varphi}
\newcommand{\del}{\delta}
\newcommand{\Del}{\Delta}
\newcommand{\f}{\frac}

\newcommand{\subs}{\subseteq}
\newcommand{\ovs}{\overset}

\newcommand{\BN}{\Bbb{N}}
\newcommand{\BZ}{\Bbb{Z}}
\newcommand{\BR}{\Bbb{R}}
\newcommand{\BC}{\Bbb{C}}

\newcommand{\cH}{{\cal{H}}}

\newcommand{\lo}{\longrightarrow}
\newcommand{\Lo}{\Longrightarrow}
\newcommand{\LO}{\Longleftrightarrow}
\newcommand{\rig}{\rightarrow}

\newcommand{\Max}{\operatorname{Max}}

\newcommand{\Sign}{\operatorname{Sign}}
\newcommand{\bP}{{\bold{P}}}

\newcommand{\bS}{{\bold{S}}}
\newcommand{\bT}{{\bold{T}}}
\newcommand{\bA}{{\bold{A}}}
\newcommand{\sA}{\mathscr{A}}

\input{epsf}

    \setcounter{page}{1}
    \usepackage{graphicx}
    \usepackage{amsmath}
    \usepackage{amsthm}
    \usepackage{amssymb}
    \usepackage{latexsym}

\newtheorem{theorem}{Theorem}[section]
\newtheorem{lemma}[theorem]{Lemma}
\newtheorem{proposition}[theorem]{Proposition}
\newtheorem{corollary}[theorem]{Corollary}
\theoremstyle{definition}
\newtheorem{definition}[theorem]{Definition}

\newtheorem{remark}[theorem]{Remark}

\title{An iterative thresholding algorithm for linear inverse problems with 
mixed multi-constraints and its applications}
\author{Saman Khoramian\thanks{{\it E-mail address:}
 saman.khoramian@gmail.com}}
\date{ }

\begin{document}

\maketitle

\begin{abstract}
In this paper, we will present a generalization for a minimization problem 
from I. Daubechies, M. Defrise, and C. Demol [3].
This generalization is useful for solving many practical problems in which 
more than one constraint are involved. In this regard, we will conclude the 
findings of many papers (most of which are on image processing) from this 
generalization. It is hoped that the approach proposed in this paper will be a 
suitable reference for some applied works where multi-frames, multi-wavelets, 
or multi-constraints are present in linear inverse problems.
\end{abstract}
{\it Keywords:} Hilbert Space; Linear inverse problem; Minimizer; 
Multi-constraints; Multi-frame; Regularization.

\section{Introduction}
Many practical problems in sciences especially in 
 applied sciences, can expressed as the 
following operator equation 
$$Kf=h$$
where $K:X\lo Y$ is a linear bounded operator between normed spaces $X,Y$, and 
$h\in Y$ is fixed.

The observations or {\it data}
 are typically not exactly equal to $h=Kf$, but 
rather to a distortion  of $h$. This distortion is often modeled by an 
{\it additive noise} or {\it error} term $e$, i.e.
$$g=h+e=Kf+e.$$
To find an estimate of $f$ from the observed $g$, one can minimize the 
{\it discrepancy} $\Del(f)$,
$$\Del(f)=\|Kf-g\|^2$$
In some problems, we have a priori knowledge about the solution. For instance, 
we know $\|f\|=\rho$ or $\|f\|\leq \rho$, where $\rho$ is constant. In such 
situations, we define the following functional 
$$\Phi(f;g)=\|Kf-g\|^2+\mu\|f\|^2$$
where $\mu$ is some positive constant called the {\it
 regularization parameter.}
Minimization of the following functionals has been considered in [1,2,6].
\begin{align*}
& \|Kf-g\|^2_Y+ \mu\|f\|_X^2,\\
& \|I(f)-g\|^2_{L_2(I)}+\la \|f\|^2_{W^\beta(L_2(I))}
; \|f\|^2_{W^\beta((L_2(I))}=\sum_{0\leq k} 
\sum_{j\in\BZ_k^2}\sum_{\psi\in \psi_k} 2^{2\beta k}|c_{j,k,\psi}|^2; 
\beta<\f{1}{2}\\
& \|I(f)-g\|^2_{L_2(I)}+\la \|f\|^\tau_{B^\beta_\tau(L_\tau(I))}
; \|f\|^\tau_{B^\beta_\tau((L_\tau(I))}=\sum_{0\leq k} 
\sum_{j\in\BZ_k^2}\sum_{\psi\in \psi_k} |c_{j,k,\psi}|^\tau 
\end{align*}
All of above functionals are special cases of the following functional 
$$\Phi(f)=\|Kf-g\|^2_{\cH'}+|\!|\!|f|\!|\!|^p_{W,p}$$
where $\cH,\cH'$ are Hilbert spaces and $K$ is linear bounded operator 
from $\cH$ to $\cH'$ and 
$$|\!|\!|f|\!|\!|_{W,p}=\left( \sum_{\ga\in\Ga} w_\ga | 
<f,\var_\ga>|^p\right)^{\f{1}{p}}$$
for $1\leq p\leq 2$, is a weighted $l^p$-norm of the coefficients of $f$ with 
respect to an orthonormal basis $(\var_\ga)_{\ga\in\Ga}$ of $\cH$, and a 
sequence of strictly positive weights $W=(w_\ga)_{\ga\in\Ga}$. Studying the
minimizer of $\Phi$ is useful for some applied sciences (see [3]). The 
minimizer of $\Phi$ has not been found, but $I$. Daubechies, M. Defrise, 
and C. 
Demol [3] have found a sequence in $\cH$ that converges strongly to the 
minimizer of $\Phi$. In this paper, we will do the same to the following 
functional, 
\begin{equation*}
\Phi(f)=\|Kf-g\|^2_{\cH'} 
+|\!|\!|f|\!|\!|^{p_1}_{W_1,p_1}+\dots+|\!|\!|f|\!|\!|^{p_n}_{W_n,p_n} \tag{$*$}
\end{equation*}
where 
$$|\!|\!|f|\!|\!|_{W_i,p_i}=\left( \sum_{\ga\in\Ga_i} w_\ga| <f,\var_\ga> 
|^{p_i}\right)^{\f{1}{p_i}}$$
for $1\leq i\leq n; 1\leq p_i\leq 2$, $W_i=\{w_\ga\}_{\ga\in\Ga_i}, 
\Ga=\Ga_1\cup\dots\cup\Ga_{n}$, $\{\var_\ga\}_{\ga\in\Ga}$ is an orthonormal 
basis for Hilbert space $\cH$.

We call $\sum_{i=1}^n |\!|\!|f|\!|\!|^{p_i}_{W_i,p_i}$ {\it mixed  
multi-constraints} 
and show them by the following shorthand notation, 
$$|\!|\!|f|\!|\!|_{W,\bP}^{\bP} = 
|\!|\!|f|\!|\!|^{p_1}_{W_1,p_1}+\dots+|\!|\!|f|\!|\!|^{p_n}_{W_n,p_n}$$
such that $\bP=\{p_1,p_2,\dots,p_n\}$ and $W=W_1\cup\dots\cup W_n$.

In the special case of $p=p_1=p_2=\dots=p_n$, we will have
$$|\!|\!|f|\!|\!|_{W,\bP}^{\bP}=\sum_{i=1}^n 
|\!|\!|f|\!|\!|^{p_i}_{W_i,p_i}=\sum_{i=1}^n 
\sum_{\ga\in\Ga_i} w_\ga| <f,\var_\ga>|^p=\sum_{\ga\in\Ga} w_\ga 
|<f,\var_\ga>|^p=|\!|\!|f|\!|\!|^p_{W,p}$$
For regularization, we will define for $\al=(\al_1,\dots,\al_n)$ the 
functional $\Phi_{\al;g}$ on $\cH$ by 
$$\Phi_{\al;g}(f)=\|Kf-g\|^2 
+\al_1|\!|\!|f|\!|\!|^{p_1}_{W_1,p_1}+\al_2|\!|\!|f|\!|\!|^{p_2}_{W_2,p_2}+
\dots+\al_n |\!|\!|f|\!|\!|^{p_n}_{W_n,p_n}$$
Also we
 will suppose $f^*_{\al;g}$ is minimizer of the functional $\Phi_{\al;g}$. By 
assuming $$\al(\eps)=(\al_1(\eps),\al_2(\eps),\dots,\al_n(\eps))$$
 such that 
$$\lim_{\eps\rig 0} \al_i(\eps)=0,\quad \lim_{\eps\rig 0} 
\f{\eps^2}{\al_i(\eps)}=0,\quad \lim_{\eps\rig 0} 
\f{\al_i(\eps)}{\al_j(\eps)}=1$$
for every $i,j; 1\leq i,j\leq n$, we will show, for any $f_0\in \cH$,
$$\lim_{\eps\rig 0} \left[ \sup_{\|g-Kf_0\|<\eps} 
\|f^*_{\al(\eps);g}-f^\dagger\|\right]=0$$
where $f^\dagger$ is the unique minimal element  with regard to 
$|\!|\!|\cdot|\!|\!|^{\bP}_{W,\bP}$ in $S=N(K)+f_0=\{f;K(f)=K(f_0)\}$.

Here, we adopt the same procedure as in I. Daubechies, M. Defrise, and C.DeMol 
[3]. Although the novelty of the paper lies in the section related to 
regularization, since changing $|\!|\!|\cdot|\!|\!|^{p}_{W,p}$ to 
$|\!|\!|\cdot|\!|\!|^{p_1}_{W_1,p_1}+\dots+
|\!|\!|\cdot|\!|\!|^{p_n}_{W_n,p_n}$ is a fundamental change, in other 
sections we repeat the procedure adopted in [3] for accuracy, and provide the 
proofs of the theorems to which some change, albeit minor, has been made. 
Moreover, for a more logical presentation of the material, the order of the 
theorems has been slightly changed.

This generalization can be used in applied problems. To show this in section 
3, we conclude the finding of the following papers from this generalization.

\begin{itemize}
\item[{[4]}] 
I.Daubechies, G. Teschke, Variational image restoration by means of wavelets: 
Simultaneous decomposition, deblurring and denoising, Appl. Comput. Harmon. 
Anal. 19(1) (2005) 1-16.
\item[{[5]}]
 M. Defrise, C. Demol, Inverse imaging with mixed penalties, in: Conference 
Proceedings, 2004.
\item[{[9]}]
 G. Teschke, Multi-frame representations in linear inverse problems 
with mixed multi-constraints, Appl. Comput. Harmon. Anal. 22(2007), 43-60. 
\end{itemize}

Furthermore, we point out that in subsection 3.1 we have managed to prove a 
regularization theorem required by G. Teschke [7].

\section{An iterative thresholding algorithm for linear inverse problems with 
mixed multi-constraints}
\subsection{An iterative algorithm through surrogate functionals}
At first, we present a Lemma which has been proved in [3]:
\begin{lemma}\label{2.1}
The minimizer of the function $M(x)=x^2-2bx+c|x|^p$ for $p\geq 1$, $c>0$ is 
$S_{c,p}(b)$, where
 the function $S_{c,p}$ from $R$ to itself is defined by 
$$S_{c,p}(t)=\begin{cases}
F^{-1}_{c,p}(t) & p>1\\
t-\f{c}{2} & p=1,t> \f{c}{2}\\
0 & p=1, |t|\leq \f{c}{2} \\
t+\f{c}{2} & p=1,t< -\f{c}{2}
\end{cases}$$
where the function $F_{c,p}$ is defined by 
$$F_{c,p}(t)=t+\f{cp}{2} \Sign (t) |t|^{p-1}.$$
\end{lemma}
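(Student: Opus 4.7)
The plan is to exploit strict convexity of $M$ and treat the smooth case $p>1$ and the non-smooth case $p=1$ separately. First I would observe that $M(x)=x^2-2bx+c|x|^p$ is strictly convex on $\mathbb{R}$: the quadratic $x^2$ is strictly convex, $-2bx$ is affine, and $c|x|^p$ is convex for $p\geq 1$ and $c>0$. Consequently $M$ admits a unique minimizer, and the first-order optimality condition $0\in\partial M(x)$ is both necessary and sufficient.

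For $p>1$, the map $x\mapsto c|x|^p$ is $C^1$ with derivative $cp\,\Sign(x)|x|^{p-1}$, so $M$ is differentiable and the optimality condition reads
$$2x-2b+cp\,\Sign(x)|x|^{p-1}=0,$$
which is exactly $F_{c,p}(x)=b$. It then remains to check that $F_{c,p}$ is a homeomorphism of $\mathbb{R}$: it is continuous with $F_{c,p}(\pm\infty)=\pm\infty$, and on $\mathbb{R}\setminus\{0\}$ its derivative equals $1+\tfrac{cp(p-1)}{2}|x|^{p-2}>0$. Strict monotonicity on each open half-line combined with continuity at $0$ gives strict monotonicity on $\mathbb{R}$, so $F_{c,p}$ is a bijection and the unique minimizer is $F_{c,p}^{-1}(b)$.

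For $p=1$, differentiability fails at $0$ and I would use subdifferential calculus: $\partial|0|=[-1,1]$, so $\partial M(0)=[-2b-c,\,-2b+c]$, while $M$ is smooth off the origin. On $\{x>0\}$, $M'(x)=2x-2b+c=0$ gives $x=b-\tfrac{c}{2}$, admissible iff $b>\tfrac{c}{2}$; on $\{x<0\}$, $M'(x)=2x-2b-c=0$ gives $x=b+\tfrac{c}{2}$, admissible iff $b<-\tfrac{c}{2}$; and if $|b|\leq \tfrac{c}{2}$ then $0\in\partial M(0)$, forcing the minimizer to be $0$. These three regimes are mutually exclusive, cover all values of $b$, and match the piecewise definition of $S_{c,1}(b)$ in the statement.

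The only mildly delicate point is the monotonicity of $F_{c,p}$ when $1<p<2$, where the derivative formula is singular at the origin; it must be deduced from strict monotonicity on each half-line together with continuity at $0$, rather than from a global derivative lower bound. Every other step is routine first-order calculus and uniqueness from strict convexity.
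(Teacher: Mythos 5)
Your argument is correct. Note, however, that the paper does not actually prove Lemma 2.1: it is quoted from [3], where essentially your argument (convexity, the variational equation $F_{c,p}(x)=b$ for $p>1$, and a three-case analysis at the kink for $p=1$) appears. The closest thing to a proof inside this paper is the proof of the generalized Lemma 3.10 in subsection 3.3, and that proof takes a genuinely different route: existence of a minimizer is deduced from continuity and coercivity ($M(x)\to+\infty$ as $x\to\pm\infty$) rather than from convexity, and the regime $|b|\leq c/2$ versus $|b|>c/2$ is settled not by a subdifferential inclusion but by enumerating the candidate critical points on each open half-line and invoking an auxiliary ``Fact'' to show $M(F_1^{-1}(b))\leq 0=M(0)$, i.e.\ by directly comparing values at the finitely many candidates. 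Your convexity/subdifferential route is shorter, gives uniqueness for free, and makes the necessity and sufficiency of the first-order condition automatic; the paper's comparison argument is more elementary (no convex analysis) and is the one that is actually reused for the mixed penalty $\sum_i c_i|x|^{p_i}$ in Lemma 3.10. Two small points you should make explicit: first, strict convexity alone does not guarantee existence of a minimizer, so either record the coercivity bound $M(x)\geq x^2-2|b|\,|x|$ or observe that your case analysis exhibits, for every $b$, a point with $0\in\partial M(x)$, which suffices; second, your remark about the delicate monotonicity of $F_{c,p}$ for $1<p<2$ is well taken, but it is even easier to note that $t\mapsto \Sign(t)|t|^{p-1}$ is nondecreasing on all of $R$, so $F_{c,p}$ is the sum of a strictly increasing function and a nondecreasing one, hence strictly increasing, continuous, and surjective.
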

In this paper, we use the shorthand notation $f_\ga$ for $<f,\var_\ga>$, 
$h_\ga$ for $<h,\var_\ga>$, etc.

\begin{proposition}
Suppose $K:\cH\lo \cH'$ is an operator, with $\|KK^*\|<1$,
$(\var_\ga)_{\ga\in\Ga}$ is an orthonormal basis for $\cH$, and 
$W=(w_\ga)_{\ga\in\Ga}$ is a sequence such that $\forall \ga\in\Ga$ 
$w_\ga>c>0$. Further suppose $g$ is an element of $\cH'$. Let 
$\Ga=\Ga_1\cup\Ga_2\cup \dots\cup \Ga_n$, $W=W_1\cup W_2\cup\dots\cup W_n$, 
$\bP =\{p_1,\dots,p_n\}$ such that $p_i\geq 1$ for $1\leq i\leq n$. 
Choose $a\in 
\cH$ and define the functional $\Phi^{S\cup R}_{W,\bP}(f;a)$ on $\cH$ by 
$$\Phi^{S\cup R}_{W,\bP} (f;a)=\|Kf-g\|^2+ \sum_{i=1}^n \sum_{\ga\in\Ga_i} 
w_\ga |f_\ga|^{p_i}+ \|f-a\|^2 - \|K(f-a)\|^2.$$
Also, define operators $\bS_{W,\bP}$ by 
$$\bS_{W,\bP}(h)=\sum_{i=1}^n \sum_{\ga\in\Ga_i}S_{w_\ga,p_i}(h_\ga)\var_\ga$$
with functions $S_{w,p}$ from $R$ to itself given by Lemma 2.1.

By these assumptions, we will have 

A) $f_{\min}$=minimizer of the functional $\Phi_{W,\bP}^{S\cup 
R}=\bS_{W,\bP}(a+K^*(g-Ka))$

B) for all $h\in \cH$, one has
$$\Phi^{S\cup R}_{W,\bP}(f_{\min}+h;a) \geq \Phi^{S\cup 
R}_{W,\bP}(f_{\min};a)+\|h\|^2.$$
\end{proposition}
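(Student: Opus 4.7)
The plan is to first manipulate $\Phi^{S\cup R}_{W,\bP}(f;a)$ so that the operator $K$ disappears from the quadratic part, turning the problem into an uncoupled collection of one-variable minimizations to which Lemma~\ref{2.1} can be applied coordinate-wise. Expanding
\[
\|Kf-g\|^2+\|f-a\|^2-\|K(f-a)\|^2
=\|f\|^2-2\langle f,\,a+K^*(g-Ka)\rangle+C(a,g),
\]
where $C(a,g)$ collects terms independent of $f$, one sees that the operator $K$ only enters through the element $b:=a+K^*(g-Ka)\in\cH$. Writing everything in the basis $(\var_\ga)$ and noting that the partition $\Ga=\bigsqcup\Ga_i$ induces a partition of the sum, I would obtain
\[
\Phi^{S\cup R}_{W,\bP}(f;a)=\sum_{i=1}^{n}\sum_{\ga\in\Ga_i}\bigl(f_\ga^{2}-2f_\ga b_\ga+w_\ga|f_\ga|^{p_i}\bigr)+C(a,g).
\]
Since the inner summand depends only on the single scalar $f_\ga$, the infimum is attained by minimizing each term separately, and Lemma~\ref{2.1} gives $(f_{\min})_\ga=S_{w_\ga,p_i}(b_\ga)$ for $\ga\in\Ga_i$. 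Reassembling these coefficients yields exactly $f_{\min}=\bS_{W,\bP}\bigl(a+K^*(g-Ka)\bigr)$, which proves (A). A small check is needed to verify that $f_{\min}\in\cH$, i.e.\ that the sequence of thresholded coefficients is square-summable, which follows from $w_\ga>c>0$ and $b\in\cH$ since each $S_{w_\ga,p_i}$ is a contraction toward $0$.

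For part (B), I would compute
\[
\Phi^{S\cup R}_{W,\bP}(f_{\min}+h;a)-\Phi^{S\cup R}_{W,\bP}(f_{\min};a)
=\sum_{i=1}^{n}\sum_{\ga\in\Ga_i}\Bigl[2\bigl((f_{\min})_\ga-b_\ga\bigr)h_\ga+h_\ga^{2}+w_\ga\bigl(|(f_{\min})_\ga+h_\ga|^{p_i}-|(f_{\min})_\ga|^{p_i}\bigr)\Bigr].
\]
The $\sum h_\ga^{2}$ part already contributes $\|h\|^{2}$, so it remains to show that the remaining two pieces sum to something non-negative on each coordinate. This is exactly the first-order optimality inequality for $x\mapsto x^{2}-2b_\ga x+w_\ga|x|^{p_i}$ at its minimizer $(f_{\min})_\ga$, combined with convexity of $|\cdot|^{p_i}$. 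Concretely, the subdifferential of the convex function $x\mapsto w_\ga|x|^{p_i}$ at the optimum contains $-2((f_{\min})_\ga-b_\ga)$, and the subgradient inequality then gives
\[
w_\ga\bigl(|(f_{\min})_\ga+h_\ga|^{p_i}-|(f_{\min})_\ga|^{p_i}\bigr)\ \geq\ -2\bigl((f_{\min})_\ga-b_\ga\bigr)h_\ga.
\]

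The cleanest subtlety, and the step I expect to be most delicate, is handling the non-smooth case $p_i=1$ at $(f_{\min})_\ga=0$: there the derivative is replaced by the whole subdifferential $[-w_\ga,w_\ga]$, and one must use the characterization of $S_{w_\ga,1}$ (namely $|b_\ga|\leq w_\ga/2$ when the thresholded value is $0$) to check that $|h_\ga|\geq\frac{2|b_\ga|}{w_\ga}\cdot\Sign(h_\ga)\,h_\ga$ suffices. Once each coordinate inequality is established, summing over $\ga$ and $i$ yields (B). Finally, since $\|h\|^{2}>0$ for $h\neq0$, (B) also certifies that $f_{\min}$ is the \emph{unique} minimizer, which is the extra information needed later when iterating the surrogate construction.
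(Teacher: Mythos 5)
Your proposal is correct and follows essentially the same route as the paper: expand the surrogate functional so that $K$ enters only through $b=a+K^*(g-Ka)$, diagonalize over the orthonormal basis respecting the partition $\Ga=\Ga_1\cup\dots\cup\Ga_n$, and apply Lemma 2.1 coordinate-wise for (A), then reduce (B) to a per-coordinate inequality. The only (minor, and welcome) difference is in (B), where you replace the paper's case split --- citing [3] for $p_i=1$ and using the explicit inequality $|1+\al|^{p_i}-1-p_i\al\geq 0$ for $p_i>1$, $f_\ga\neq 0$ --- by a single subgradient inequality for the convex map $x\mapsto w_\ga|x|^{p_i}$ at the coordinate minimizer, which handles all $p_i\geq 1$ and the point $f_\ga=0$ uniformly; your remark that one must check square-summability of the thresholded coefficients is a detail the paper leaves implicit.
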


\begin{proof}
A) 
\begin{align*}
\Phi^{S\cup R}_{W,\bP}(f;a) &=\|Kf-g\|^2 +\sum_{\ga\in\Ga} 
w_\ga|<f,\var_\ga>|^{p_\ga} -\|Kf-Ka\|^2+\|f-a\|^2\\
&=\|f\|^2-2<f,a+K^*g-K^*Ka>+\sum_{\ga\in\Ga} w_\ga |f_\ga|^{p_\ga} +\|g\|^2+ 
\|a\|^2- \|Ka\|^2\\
&=\sum_{\ga\in\Ga} [|f_\ga|^2 -2f_\ga(a+K^*g-K^*Ka)_\ga+ w_\ga|f_\ga|^{p_\ga}] 
+ \|g\|^2+\|a\|^2-\|Ka\|^2\\
&=\sum_{i=1}^n \sum_{\ga\in\Ga_i} 
[f^2_\ga-2f_\ga(a+K^*g-K^*Ka)_\ga+w_\ga|f_\ga|^{p_i}] +\|g\|^2+\|a\|^2-\|Ka\|^2
\end{align*}
By Lemma 2.1, we have
\begin{align*}
f_{\min} &=\sum_{i=1}^n \left( \sum_{\ga\in\Ga_i} 
S_{w_\ga,p_i}((a+K^*g-K^*Ka)_\ga)\var_\ga\right) \\
&=\sum_{i=1}^n \bS_{W_i,p_i} (a+K^*g-K^*Ka)=\bS_{W,\bP}(a+K^*g-K^*Ka)
\end{align*}

B) 
\begin{align*}
\Phi_{W,\bP}^{S\cup R}(f+h;a) &-\Phi_{W,\bP}^{S\cup R} (f;a) \\
&=\sum_{\ga\in\Ga} w_\ga 
(|f_\ga+h_\ga|^{p_\ga}-|f_\ga|^{p_\ga})+2<h,f-a-K^*(g-Ka)>+\|h\|^2\\
&=\sum_{i=1}^n \left( \sum_{\ga\in\Ga_i} 
w_\ga|f_\ga+h_\ga|^{p_i}-w_\ga|f_\ga|^{p_i}\right) + \sum_{\ga\in\Ga} 
2h_\ga(f-a-K^*(g-Ka))_\ga+\|h\|^2\\
&=\sum_{i=1}^n \left( \sum_{\ga\in\Ga_i} 
w_\ga|f_\ga+h_\ga|^{p_i}-w_\ga|f_\ga|^{p_i} 
+2h_\ga(f-a-K^*(g-Ka))_\ga\right)+\|h\|^2
\end{align*}
In [3], the following has been established for $p_i=1$ 
$$w_\ga|f_\ga+h_\ga|^{p_i} -w_\ga|f_\ga|^{p_i} +2h_\ga(f-a-K^*(g-Ka))_\ga\geq 
0$$
If $p_i>1$, since $f_\ga=S_{w_\ga,p_i}((a+K^*g-K^*Ka)_\ga)$, we have
$$2(f-a-K^*(g-Ka))_\ga=-w_\ga p_i\Sign (f_\ga)|f_\ga|^{p_i-1}$$
Also, for $f_\ga\neq 0$, there is  $\al\in R$ such that $h_\ga=\al f_\ga$. Then
\begin{align*}
w_\ga |f_\ga+h_\ga|^{p_i} - & w_\ga|f_\ga|^{p_i} +2h_\ga(f-a-K^*(g-Ka))_\ga\\
&=w_\ga|f_\ga+h_\ga|^{p_i}-w_\ga |f_\ga|^{p_i} -w_\ga p_i h_\ga \Sign 
(f_\ga)|f_\ga|^{p_i-1}\\
&=w_\ga|f_\ga+\al f_\ga|^{p_i}-w_\ga|f_\ga|^{p_i} -p_i \al w_\ga f_\ga 
\Sign(f_\ga)|f_\ga|^{p_i-1}\\
&=w_\ga|f_\ga|^{p_i}(|1+\al|^{p_i}-1-p_i\al)\geq 0.
\end{align*}
\end{proof}

\begin{definition}\label{2.3}
Pick $f^0$ in $\cH$. We define the functions $f^n$
 recursively by the following algorithm:
\begin{align*}
f^1 &=\arg-\min(\Phi^{S\cup R}_{W,\bP}(f;f^0)), \quad f^2=\arg-\min(\Phi^{S\cup
R}_{W,\bP}(f,f^1)),\dots,\\
f^n &=\arg-\min(\Phi^{S\cup R}_{W,\bP}(f;f^{n-1})).
\end{align*}
\end{definition}

\begin{corollary}\label{2.4}
By Definition 2.3, we clearly have
$$f^n=\bS_{W,\bP}(f^{n-1}+K^*(g-Kf^{n-1})).$$
\end{corollary}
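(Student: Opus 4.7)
The plan is very short because this statement is essentially an immediate application of Proposition 2.2 (A) to the recursive definition. First I would note that Definition 2.3 sets $f^n = \arg\min_f \Phi^{S\cup R}_{W,\bP}(f;f^{n-1})$, so what needs to be shown is merely the identity between this minimizer and the claimed surrogate-shrinkage expression.

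Then I would appeal to part (A) of Proposition 2.2, which is already proved and which asserts, for an arbitrary anchor $a\in\cH$, that the minimizer of $\Phi^{S\cup R}_{W,\bP}(\cdot\,;a)$ is exactly $\bS_{W,\bP}(a+K^*(g-Ka))$. Specializing $a=f^{n-1}$ immediately yields
\begin{equation*}
f^n=\bS_{W,\bP}\bigl(f^{n-1}+K^*(g-Kf^{n-1})\bigr),
\end{equation*}
which is the claimed formula. Uniqueness of the minimizer (needed so that writing $f^n$ as ``the'' $\arg\min$ is unambiguous) follows from part (B) of Proposition 2.2: the strict inequality $\Phi^{S\cup R}_{W,\bP}(f_{\min}+h;a)\geq \Phi^{S\cup R}_{W,\bP}(f_{\min};a)+\|h\|^2$ forces any two minimizers to coincide.

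There is no real obstacle here; the only thing to watch is keeping the hypotheses of Proposition 2.2 satisfied at each step of the recursion, which is automatic since $f^{n-1}\in\cH$ by construction and the operator $K$, the basis $\{\var_\ga\}$, and the weights $W$ are fixed throughout. So the ``corollary'' is really just a restatement of Proposition 2.2 (A) along the orbit of the iteration.
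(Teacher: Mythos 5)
Your proposal is correct and is exactly the argument the paper intends (the paper marks this as ``clearly'' following from Definition 2.3, i.e., an application of Proposition 2.2(A) with $a=f^{n-1}$). Your extra remark that part (B) guarantees uniqueness of the minimizer, so that the $\arg\min$ notation is well defined, is a sensible addition but does not change the route.
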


\begin{definition}\label{2.5}
We define operator $\bT$ from $\cH$ to $\cH$ by 
$$\bT(f)=\bS_{W,\bP}(f+K^*(g-Kf)).$$
\end{definition}

\begin{corollary}\label{2.6}
By Definition 2.5, we have
$$\bT^n(f^0)=f^n.$$
\end{corollary}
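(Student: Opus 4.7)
The plan is to observe that this statement is essentially an immediate consequence of combining Corollary~\ref{2.4} with Definition~\ref{2.5}, followed by a routine induction on $n$.

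First, I would compare the two formulas side by side. Definition~\ref{2.5} stipulates that $\bT(f)=\bS_{W,\bP}(f+K^*(g-Kf))$ for every $f\in\cH$, while Corollary~\ref{2.4} tells us that the recursively defined sequence $(f^n)$ satisfies $f^n=\bS_{W,\bP}(f^{n-1}+K^*(g-Kf^{n-1}))$. Substituting $f=f^{n-1}$ into the definition of $\bT$ therefore gives the one-step recursion $f^n=\bT(f^{n-1})$ for every $n\geq 1$.

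From this one-step identity, the claim $\bT^n(f^0)=f^n$ follows by induction on $n$. The base case $n=1$ is just $f^1=\bT(f^0)$, which we have already noted. For the inductive step, assuming $f^{n-1}=\bT^{n-1}(f^0)$ holds, we apply $\bT$ to both sides and use the recursion to get $f^n=\bT(f^{n-1})=\bT(\bT^{n-1}(f^0))=\bT^n(f^0)$.

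There is no real obstacle here: the only content in the argument is the observation that the fixed map whose iterates appear on the left really is the same map that produces the sequence on the right, and the rest is bookkeeping. Since Corollary~\ref{2.4} has already absorbed the nontrivial minimization step (via Proposition 2.2), the proof of Corollary~\ref{2.6} is essentially a tautology that just packages the recursive definition into the language of iterated operators.
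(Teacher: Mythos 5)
Your argument is correct and is exactly what the paper intends: Corollary 2.4 together with Definition 2.5 gives $f^n=\bT(f^{n-1})$, and the identity $\bT^n(f^0)=f^n$ then follows by the routine induction you describe (the paper simply states the corollary as immediate without writing this out).
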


\subsection{Weak convergence of the $f^n$}
The following lemma can be proved in the same manner as its corresponding
lemma in [3]:
\begin{lemma}\label{2.7}
If $f^*\in \{f\in\cH|\bT(f)=f\}$ then $f^*$ is a minimizer of $\Phi_{W,\bP}$.
\end{lemma}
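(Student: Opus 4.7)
The plan is to exploit Proposition 2.2 with the fixed point itself serving as the base point of the surrogate functional. Observe that the surrogate differs from $\Phi_{W,\bP}$ only by the term $\|f-a\|^2 - \|K(f-a)\|^2$. In particular, when $a = f^*$ we get $\Phi^{S\cup R}_{W,\bP}(f^*; f^*) = \Phi_{W,\bP}(f^*)$, and for any $h\in\cH$,
$$\Phi^{S\cup R}_{W,\bP}(f^*+h; f^*) = \Phi_{W,\bP}(f^*+h) + \|h\|^2 - \|Kh\|^2.$$

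Next, I would use the hypothesis $\bT(f^*) = f^*$. By Definition 2.5 this reads $f^* = \bS_{W,\bP}(f^* + K^*(g - Kf^*))$, which by Proposition 2.2(A) is precisely the statement that $f^*$ minimizes $\Phi^{S\cup R}_{W,\bP}(\,\cdot\,; f^*)$. Therefore I may apply Proposition 2.2(B) with the base point $a = f^*$ and with $f_{\min} = f^*$, yielding
$$\Phi^{S\cup R}_{W,\bP}(f^*+h; f^*) \geq \Phi^{S\cup R}_{W,\bP}(f^*; f^*) + \|h\|^2$$
for every $h \in \cH$.

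Combining the two displayed identities and this inequality gives
$$\Phi_{W,\bP}(f^*+h) + \|h\|^2 - \|Kh\|^2 \geq \Phi_{W,\bP}(f^*) + \|h\|^2,$$
so $\Phi_{W,\bP}(f^*+h) \geq \Phi_{W,\bP}(f^*) + \|Kh\|^2 \geq \Phi_{W,\bP}(f^*)$. Since $h$ was arbitrary, $f^*$ is a minimizer of $\Phi_{W,\bP}$.

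There is essentially no obstacle here; the work has already been done in Proposition 2.2. The only conceptual point to double-check is the bookkeeping that $\bT(f^*)=f^*$ exactly matches the formula for the minimizer from part (A), so that part (B) applies with $f_{\min} = f^*$. Once that identification is made, the surrogate structure automatically absorbs the unwanted $\|f-a\|^2 - \|K(f-a)\|^2$ term via the assumption $\|KK^*\| < 1$ (which guarantees $\|Kh\|^2 \leq \|h\|^2$ and, more importantly, gives the $+\|h\|^2$ slack in Proposition 2.2(B)).
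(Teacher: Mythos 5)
Your proof is correct and is essentially the argument the paper relies on: the paper omits the proof, stating it goes "in the same manner as its corresponding lemma in [3]," and the argument in [3] is exactly this one — identify the fixed point of $\bT$ with the minimizer of the surrogate $\Phi^{S\cup R}_{W,\bP}(\cdot;f^*)$ via Proposition 2.2(A), apply part (B), and cancel the $\|h\|^2$ terms to get $\Phi_{W,\bP}(f^*+h)\geq\Phi_{W,\bP}(f^*)+\|Kh\|^2$. The bookkeeping you flag (that $\bT(f^*)=f^*$ matches the formula for $f_{\min}$ with $a=f^*$) is indeed the only point to check, and you have it right.
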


By the following lemma, we state characteristics of function $T$ and the 
sequence $\{f^n\}_{n=1}^\infty$ which were introduced in Definition 2.3 and 
Definition 2.5.

\begin{lemma}\label{2.8}
Let $K$ be a bounded linear operator from $\cH$ to $\cH'$, with 
the norm strictly bounded by 1. 
Take $p_i\in [1,2]$ for $1\leq i\leq n$, and let $\bS_{W,\bP}$ be the operator 
defined by 
$$\bS_{W,\bP}(h)=\sum_{\ga\in\Ga} S_{w_\ga,p_\ga}(h_\ga)\var_\ga$$
where the sequence $W=(w_\ga)_{\ga\in \Ga}$ is uniformly bounded below away 
from zero, i.e. there exists a constant $c>0$ such that $\forall \ga\in\Ga; 
w_\ga\geq c$ and we have $p_i=p_\ga$ for every $\ga\in\Ga_i$. Let,
$\Ga=\Ga_1\cup \Ga_2\cup \dots\cup \Ga_{n}, W_i =\{w_\ga\}_{\ga\in\Ga_i},$ 
$\bP=\{p_1,p_2,\dots,p_n\}$ and suppose 
$|\!|\!|f|\!|\!|^{p_i}_{W_i,p_i}=\sum_{\ga\in\Ga_i} w_\ga|f_\ga|^{p_i}$,
\begin{align*}
|\!|\!|f|\!|\!|^{\bP}_{W,\bP} &=\sum_{\ga\in\Ga} w_\ga |f_\ga|^{p_\ga} \\
&=\sum_{i=1}^n \sum_{\ga\in\Ga_i}w_\ga |f_\ga|^{p_i} =|\!|\!| 
f|\!|\!|^{p_1}_{W_1,p_1}+\dots+ |\!|\!|f|\!|\!|^{p_n}_{W_n,p_n} 
\end{align*}
\begin{align*}
\Phi_{W,\bP}(f) &=\|Kf-g\|^2+|\!|\!|f|\!|\!|^{\bP}_{W,\bP},\\
\Phi^{S\cup R}_{W,\bP}(f)&=\Phi_{W,\bP}(f)+\|f-a\|^2-\|K(f-a)\|^2
\end{align*}
Then 

A) $\forall v,v'\in \cH$; $\|\bS_{W,\bP}(v)-\bS_{W,\bP}(v')\|\leq \|v-v'\|$

B) $\forall v,v'\in \cH; \|\bT(v)-\bT(v')\|\leq \|v-v'\|$

C) Both $(\Phi_{W,\bP}(f^n))_{n\in\BN}$ and $(\Phi_{W,\bP}^{S\cup 
R}(f^{n+1};f^n))_{n\in\BN}$ are non-increasing sequences.

D) $\exists M>0; \forall n\in N; \|f^n\|<M$

E) $\sum_{n=0}^\infty \|f^{n+1}-f^n\|^2<\infty$

F) $\|\bT^{n+1}(f^0)-\bT^n(f^0)\|=\|f^{n+1}-f^n\|\lo 0$ for $n\lo\infty$.

Parts B,C,E,F have similar proofs to their corresponding lemmas in [3]; thus,
we just prove parts A,D, which have some  differences in their proofs.
\end{lemma}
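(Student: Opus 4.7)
For part (A), my plan is to reduce the Hilbert-space Lipschitz bound to a scalar fact. Since $\bS_{W,\bP}$ acts diagonally in the orthonormal basis $(\var_\ga)$, Parseval gives
$$\|\bS_{W,\bP}(v)-\bS_{W,\bP}(v')\|^2=\sum_{\ga\in\Ga}|S_{w_\ga,p_\ga}(v_\ga)-S_{w_\ga,p_\ga}(v'_\ga)|^2,$$
so it suffices to show that each scalar map $S_{c,p}\colon\BR\to\BR$ is $1$-Lipschitz. For $p=1$ this is the classical soft-thresholding and follows from a three-case check. For $p>1$, since $S_{c,p}=F_{c,p}^{-1}$ and $F'_{c,p}(t)=1+\f{cp(p-1)}{2}|t|^{p-2}\geq 1$ for $t\neq 0$, while $F_{c,p}$ is a strictly increasing continuous bijection of $\BR$, its inverse has derivative in $(0,1]$ away from the origin, is continuous at $0$, and is therefore $1$-Lipschitz on all of $\BR$. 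Summing the componentwise inequality over $\ga$ proves (A).

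For part (D), I will use part (C) to bound the mixed-constraint term uniformly, then convert to an $\ell^2$-bound using $p_i\leq 2$. Setting $C_0:=\Phi_{W,\bP}(f^0)$, the inequality $\Phi_{W,\bP}(f^n)\leq C_0$ together with $\|Kf^n-g\|^2\geq 0$ yields $|\!|\!|f^n|\!|\!|^{\bP}_{W,\bP}\leq C_0$, and the lower bound $w_\ga\geq c$ produces
$$\sum_{\ga\in\Ga_i}|f^n_\ga|^{p_i}\leq C_0/c\qquad\text{for every }i=1,\dots,n.$$
I then invoke the pointwise fact that for any nonnegative sequence $a$ and any $p\in[1,2]$, $\bigl(\sum_k a_k^2\bigr)^{1/2}\leq\bigl(\sum_k a_k^p\bigr)^{1/p}$, proved by normalizing so every entry is at most $1$ and applying $a_k^2\leq a_k^p$. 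Applied on each $\Ga_i$ this gives $\sum_{\ga\in\Ga_i}|f^n_\ga|^2\leq(C_0/c)^{2/p_i}$, and summing over the disjoint index sets $\Ga_i$, whose union is $\Ga$, yields the uniform bound $\|f^n\|^2\leq\sum_{i=1}^n(C_0/c)^{2/p_i}=:M^2$, independent of $n$.

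The main obstacle is in part (D): converting a bound on the weighted $\ell^{p_i}$-type quantity into a bound on $\|f^n\|_{\cH}$. The required sequence-space embedding $\ell^{p_i}\subset\ell^2$ for $p_i\leq 2$ runs opposite to the $L^p$-inclusions on finite measure spaces and must be invoked explicitly; the hypothesis $p_i\leq 2$ from the lemma is used essentially at this step. Part (A) is then essentially routine once one observes that $F'_{c,p}\geq 1$, with the only care needed being the non-smooth point $t=0$ for $1<p<2$.
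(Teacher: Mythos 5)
Your proposal is correct and follows essentially the same route as the paper: part (A) is the identical Parseval reduction to the scalar nonexpansivity of $S_{c,p}$ (which the paper simply cites from [3] while you reprove it), and part (D) arrives at exactly the same blockwise bound $\sum_{\ga\in\Ga_i}|f^n_\ga|^2\leq c^{-2/p_i}|\!|\!|f^n|\!|\!|^2_{W_i,p_i}\leq (C_0/c)^{2/p_i}$ before summing over $i$. The only cosmetic difference is that you strip the weights first and invoke the unweighted embedding $\ell^{p_i}\subseteq\ell^2$ as a named fact, whereas the paper rederives the same inequality inline with the weights via a $\sup$ argument.
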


\begin{proof} (A) By [3], we have
$$\forall p\geq 1\quad \forall x,x'\in R\;
|\; S_{w_\ga,p}(x)-S_{w_\ga,p}(x')|\leq |x-x'|$$
Therefore,
\begin{align*}
\|\bS_{W,\bP}(v)-\bS_{W,\bP}(v')\|^2 
& = \sum_{\ga\in\Ga} 
|S_{w_\ga,p_\ga}(v_\ga)-S_{w_\ga,p_\ga}(v'_\ga)|^2\\
& =\sum_{i=1}^n \sum_{\ga\in\Ga_i} 
|S_{w_\ga,p_i}(v_\ga)-S_{w_\ga,p_i}(v'_\ga)|^2\leq \sum_{i=1}^n 
\sum_{\ga\in\Ga_i} |v_\ga-v'_\ga|^2\\
&=\sum_{\ga\in\Ga} |v_\ga-v'_\ga|^2=\|v-v'\|^2
\end{align*}
(D) 
$$|\!|\!|f^n|\!|\!|^{p_i}_{W_i,p_i} \leq |\!|\!|f^n|\!|\!|^{\bP}_{W,\bP} \leq 
\|K(f^n)-g\|^2+ 
|\!|\!|f^n|\!|\!|^{\bP}_{W,\bP}=\Phi_{W,\bP}(f^n)\leq \Phi_{W,\bP}(f^0)$$
For every $\ga\in \Ga_i$ we have
\begin{align*}
w_\ga^{\f{2-p_i}{p_i}} |f_\ga|^{2-p_i} &=(w_\ga|f_\ga|^{p_i})^{\f{2-p_i}{p_i}} 
\leq \left( \sum_{\ga\in\Ga_i} w_\ga|f_\ga|^{p_i}\right)^{\f{2-p_i}{p_i}} = 
|\!|\!|f|\!|\!|^{2-p_i}_{W_i,p_i}\\
&\Lo \sup_{\ga\in\Ga_i} \left[ w_\ga^{\f{2-p_i}{p_i}} |f_\ga|^{2-p_i}\right] 
\leq |\!|\!|f|\!|\!|^{2-p_i}_{W_i,p_i} \tag{$1$}
\end{align*}
For every $p_i\in [1,2]$,
\begin{align*}
\forall \ga\in \Ga_i \;\;\;
 c\leq w_\ga & \Lo c^{\f{2}{p_i}} \leq w_\ga^{\f{2}{p_i}} 
=w_\ga w_\ga^{\f{2-p_i}{p_i}} \\
&\Lo \f{1}{w_\ga} \leq \f{w_\ga^{\f{2-p_i}{p_i}}}{c^{\f{2}{p_i}}} \tag{$2$}
\end{align*}
We deduce from (1), (2) that
\begin{align*}
\sum_{\ga\in\Ga_i} |f_\ga|^2 &=\sum_{\ga\in\Ga_i} \f{1}{w_\ga} |f_\ga|^{2-p_i} 
w_\ga|f_\ga|^{p_i} \\
&\leq (\sup_{\ga\in\Ga_i} \f{1}{w_\ga} |f_\ga|^{2-p_i} )\sum_{\ga\in\Ga_i} 
w_\ga|f_\ga|^{p_i} \\
&\leq (\sup_{\ga\in\Ga_i} \f{w_\ga^{\f{2-p_i}{p_i}}}{c^{\f{2}{p_i}}} 
|f_\ga|^{2-p_i}) |\!|\!|f|\!|\!|^{p_i}_{W_i,p_i} \\
&\leq c^{-\f{2}{p_i}} |\!|\!|f|\!|\!|^{2-p_i}_{W_i,p_i} 
|\!|\!|f|\!|\!|^{p_i}_{W_i,p_i} 
= c^{-\f{2}{p_i}} |\!|\!|f|\!|\!|^2_{W_i,p_i} 
\end{align*}
Consequently, for every $i; 1\leq i\leq n$, we have
\begin{align*}
\sum_{\ga\in\Ga_i} |(f^n)_\ga|^2 & \leq c^{-\f{2}{p_i}} 
|\!|\!|f^n|\!|\!|^2_{W_i,p_i} \\
&\leq c^{-\f{2}{p_i}} \Phi_{W,\bP}(f^0)^{\f{2}{p_i}}:=M_i \\
\Lo \|f^n\|^2&=\sum_{\ga\in\Ga} |(f^n)_\ga|^2=\sum_{i=1}^n \sum_{\ga\in\Ga_i} 
|(f^n)_\ga|^2\leq \sum_{i=1}^n M_i\\
\Lo \exists M>0;& \forall n\in N\quad \|f^n\|\leq M.
\end{align*}
\end{proof}

We recall Lemma 2.9 and Lemma 2.10, from [8] and Appendix B in [3]
with minor modification.

\begin{lemma}\label{2.9}
Suppose for mapping $\bA$ from $\cH$ to $\cH$ and $v_0\in\cH$ we have 
following conditions 
\begin{itemize}
\item[(i)]  $\forall v,v'\in \cH$; $\|\bA(v)-\bA(v')\| \leq \|v-v'\|$ 
\item[(ii)] $\|\bA^{n+1}(v_0)-\bA^n(v_0)\|\lo 0$ \quad as\quad $n\lo\infty$
\item[(iii)] $\{v\in\cH| \bA(v)=v\}\neq \phi$.
\end{itemize}
Then 
$$\exists v^*\in {\cH}; \quad \bA(v^*)=v^*, A^n(v_0)\ovs{weakly}{\lo} v^*.$$
\end{lemma}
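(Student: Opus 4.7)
The plan is to reproduce the classical Opial-type argument for weak convergence of Krasnosel'skii--Mann iterations of a nonexpansive map on a Hilbert space. I would split the work into three steps: boundedness of the orbit, demiclosedness of $I-\bA$ at $0$, and uniqueness of the weak cluster point.

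First, using assumption (iii), pick any fixed point $w\in\cH$ with $\bA(w)=w$. Setting $v^n=\bA^n(v_0)$, the nonexpansiveness (i) gives
\[
\|v^{n+1}-w\|=\|\bA(v^n)-\bA(w)\|\leq \|v^n-w\|,
\]
so the sequence $(\|v^n-w\|)_n$ is non-increasing and in particular converges to some $\ell(w)\geq 0$; this already shows $(v^n)_n$ is bounded. By the Banach--Alaoglu theorem it has weakly convergent subsequences in $\cH$.

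Next I would show that every weak cluster point is a fixed point of $\bA$ (the demiclosedness principle). Suppose $v^{n_k}\rightharpoonup v^*$. The Hilbert space identity
\[
\|v^{n_k}-u\|^2=\|v^{n_k}-v^*\|^2+2\langle v^{n_k}-v^*,\,v^*-u\rangle+\|v^*-u\|^2
\]
together with weak convergence gives, for every $u\in\cH$,
\[
\limsup_k\|v^{n_k}-u\|^2=\limsup_k\|v^{n_k}-v^*\|^2+\|u-v^*\|^2.
\]
Applying this with $u=\bA(v^*)$, and using nonexpansiveness plus condition (ii) (so that $\|v^{n_k+1}-v^{n_k}\|\to 0$),
\[
\|v^{n_k}-\bA(v^*)\|\leq \|v^{n_k}-v^{n_k+1}\|+\|\bA(v^{n_k})-\bA(v^*)\|\leq \|v^{n_k}-v^{n_k+1}\|+\|v^{n_k}-v^*\|,
\]
which forces $\limsup_k\|v^{n_k}-\bA(v^*)\|^2\leq \limsup_k\|v^{n_k}-v^*\|^2$. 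Comparing with the identity above yields $\|\bA(v^*)-v^*\|^2\leq 0$, hence $\bA(v^*)=v^*$. This is the step I expect to be the main obstacle, because it is the only place where one must genuinely combine weak convergence, the Hilbert-space geometry, and the asymptotic regularity (ii); it is the heart of Opial's lemma.

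Finally, to upgrade subsequential weak convergence to full weak convergence, suppose $v^{n_k}\rightharpoonup v^*$ and $v^{m_j}\rightharpoonup v^{**}$ along two subsequences. By the first step applied to the fixed points $v^*$ and $v^{**}$, both $\|v^n-v^*\|$ and $\|v^n-v^{**}\|$ converge, say to $\ell_1$ and $\ell_2$. Expanding
\[
\|v^n-v^*\|^2=\|v^n-v^{**}\|^2+2\langle v^n-v^{**},v^{**}-v^*\rangle+\|v^{**}-v^*\|^2
\]
and taking the limit first along $n_k$ and then along $m_j$ (using the weak limits to evaluate the inner product) yields, respectively,
\[
\ell_1^2=\ell_2^2-\|v^*-v^{**}\|^2\qquad\text{and}\qquad \ell_1^2=\ell_2^2+\|v^*-v^{**}\|^2,
\]
which forces $v^*=v^{**}$. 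Hence the whole sequence converges weakly to a common fixed point $v^*$, completing the lemma.
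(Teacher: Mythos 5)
Your proof is correct and is precisely the classical Opial argument (boundedness of the orbit from a fixed point, demiclosedness of $I-\bA$ at $0$ via the Hilbert-space identity and asymptotic regularity, and uniqueness of the weak cluster point), which is exactly the route the paper takes by simply citing Opial [8] and Appendix B of [3] rather than reproducing the proof. No discrepancies to report.
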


\begin{lemma}\label{2.10}
Suppose for mapping $\bA$ from $\cH$ to $\cH$ and $v_0\in\cH$ we have the
following conditions 
\begin{itemize}
\item[(i)]  $\forall v,v'\in \cH$; $\|\bA(v)-\bA(v')\| \leq \|v-v'\|$ 
\item[(ii)] $\|\bA^{n+1}(v_0)-\bA^n(v_0)\|\lo 0$ \quad as\quad $n\lo\infty$
\item[(iii)] $\exists 
\{A^{n_k}(v_0)\}_{k=1}^{+\infty}\subs\{A^n(v_0)\}_{n=1}^{+\infty}$, $\exists 
v'\in{\cH}$; $A^{n_k}(v_0)\ovs{weakly}{\lo} v'$
\end{itemize}
Then 
$$ A(v')= v'.$$
\end{lemma}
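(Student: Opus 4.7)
The plan is to exploit Opial's inequality, which is automatic in a Hilbert space: if $x_k\rightharpoonup x$ weakly and $y\neq x$, then
$$\liminf_k\|x_k-y\|\;>\;\liminf_k\|x_k-x\|,$$
as one sees from the expansion $\|x_k-y\|^2=\|x_k-x\|^2+2\langle x_k-x,\,x-y\rangle+\|x-y\|^2$ and the fact that the middle term tends to $0$ by weak convergence.

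Set $w_k:=\bA^{n_k}(v_0)$, so that $w_k\rightharpoonup v'$ by hypothesis (iii). The key observation is that the asymptotic regularity condition (ii) tells us
$$\|\bA(w_k)-w_k\|\;=\;\|\bA^{n_k+1}(v_0)-\bA^{n_k}(v_0)\|\;\longrightarrow\;0,$$
(using that $\|\bA^{m+1}(v_0)-\bA^m(v_0)\|$ is the tail of a sequence converging to $0$, hence the subsequence also tends to $0$). Combining this with the nonexpansiveness assumption (i) applied to the pair $(w_k,v')$ gives, via the triangle inequality,
$$\|w_k-\bA(v')\|\;\leq\;\|w_k-\bA(w_k)\|+\|\bA(w_k)-\bA(v')\|\;\leq\;\|w_k-\bA(w_k)\|+\|w_k-v'\|.$$
Passing to $\liminf$ and using that $\|w_k-\bA(w_k)\|\to 0$ yields
$$\liminf_k\|w_k-\bA(v')\|\;\leq\;\liminf_k\|w_k-v'\|.$$

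To finish, I would argue by contradiction: assume $\bA(v')\neq v'$ and apply Opial's inequality with $x=v'$, $y=\bA(v')$. This produces the strict inequality $\liminf_k\|w_k-\bA(v')\|>\liminf_k\|w_k-v'\|$, contradicting the display above. Hence $\bA(v')=v'$, as required. The only mildly delicate point is the Opial step itself; everything else is just the triangle inequality together with the two given hypotheses. No use of compactness or of the particular form of $\bT$ and $\bS_{W,\bP}$ is needed; the statement is purely an abstract fact about nonexpansive asymptotically regular mappings on a Hilbert space, which is exactly how it is invoked later in the paper.
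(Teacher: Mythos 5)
Your proof is correct: the paper itself gives no argument for Lemma 2.10, merely citing Opial [8] and Appendix B of [3], and the argument in those sources is exactly the one you reproduce — asymptotic regularity plus nonexpansiveness give $\liminf_k\|w_k-\bA(v')\|\leq\liminf_k\|w_k-v'\|$, which contradicts Opial's inequality unless $\bA(v')=v'$. The Opial step itself is justified by your expansion of $\|x_k-y\|^2$ together with the boundedness of weakly convergent sequences, so no gap remains.
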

By Lemma2.9 and Lemma 2.10, we clearly have the following 
corollary:

\begin{corollary}\label{2.11}
Suppose for mapping $\bA$ from $\cH$ to $\cH$ and $v_0\in\cH$ we have the
following conditions 
\begin{itemize}
\item[(i)] $\forall v,v'\in \cH$; $\|\bA(v)-\bA(v')\|\leq \|v-v'\|$
\item[(ii)] $\|\bA^{n+1}(v_0)-\bA^n(v)\|\lo 0$\quad as\quad $n\lo\infty$
\item[(iii)] $\exists \{\bA^{n_k}(v_0)\}_{k=1}^\infty \subs 
\{\bA^n(v_0)\}_{n=1}^{+\infty}$, $\exists v'\in\cH; \bA^{n_k}(v_0) 
\ovs{weakly}{\lo} v'$.
\end{itemize}
Then 
$$\exists v^*\in{\cH};\quad \bA(v^*)=v^*, \bA^n(v_0) \ovs{weakly}{\lo} v^*.$$
\end{corollary}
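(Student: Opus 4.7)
The plan is to simply chain Lemma 2.9 and Lemma 2.10 together, since the corollary's hypotheses (i)--(iii) are exactly the hypotheses of Lemma 2.10, and Lemma 2.10's conclusion supplies the missing ingredient --- a fixed point --- needed to apply Lemma 2.9.

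First I would apply Lemma 2.10 to the mapping $\bA$ and the starting point $v_0$. Conditions (i), (ii), (iii) of the corollary are literally the hypotheses of Lemma 2.10, so its conclusion $\bA(v')=v'$ follows immediately. In particular the set $\{v\in\cH\mid \bA(v)=v\}$ is nonempty, because it contains $v'$.

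Next I would invoke Lemma 2.9 on the same $\bA$ and $v_0$. Hypotheses (i) and (ii) of Lemma 2.9 coincide with (i) and (ii) of the corollary; hypothesis (iii) of Lemma 2.9 --- nonemptiness of the fixed point set --- has just been established in the previous step. Therefore Lemma 2.9 furnishes some $v^{*}\in \cH$ with $\bA(v^{*})=v^{*}$ and $\bA^{n}(v_{0})\ovs{weakly}{\lo}v^{*}$, which is exactly the conclusion sought.

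There is essentially no obstacle: the only subtle point is that the fixed point $v^{*}$ produced by Lemma 2.9 and the vector $v'$ produced by Lemma 2.10 can, if desired, be identified, because weak limits are unique and $\bA^{n_k}(v_0)\ovs{weakly}{\lo} v'$ forces $v^{*}=v'$. This identification is not needed for the stated conclusion, so the proof is essentially a two-line composition of the two lemmas.
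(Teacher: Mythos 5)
Your proposal is correct and is exactly the argument the paper intends: the corollary is stated immediately after Lemmas 2.9 and 2.10 with the remark that it follows ``clearly'' from them, i.e.\ Lemma 2.10 supplies the fixed point that makes hypothesis (iii) of Lemma 2.9 hold, and Lemma 2.9 then gives the conclusion. Nothing further is needed.
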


We recall the following functional analysis lemma from [7]:
\begin{lemma}\label{2.12}
Every bounded sequence in a reflexive space has a weakly convergent 
subsequence.
\end{lemma}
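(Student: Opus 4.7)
The plan is to reduce to a separable closed subspace and then combine a Cantor diagonal extraction with reflexivity to produce the weak limit. Let $(x_n)$ be a bounded sequence in a reflexive Banach space $X$, and let $Y$ be the closed linear span of $\{x_n : n \in \BN\}$. Then $Y$ is a separable closed subspace of $X$, and since reflexivity passes to closed subspaces (the canonical embedding $Y \hookrightarrow Y^{**}$ inherits surjectivity from $X \hookrightarrow X^{**}$), $Y$ is itself a reflexive Banach space. A separable reflexive space has separable dual: if $Y^*$ were not separable it would fail to be reflexive via a standard biduality argument. Fix a countable dense set $\{y_k^*\}_{k=1}^\infty$ in $Y^*$.

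Next I would perform the diagonal extraction. Because $(x_n)$ is bounded and each $y_k^*$ is continuous, the scalar sequence $(\langle y_1^*, x_n \rangle)_n$ is bounded and admits a convergent subsequence; iterating and diagonalizing in the familiar way produces a subsequence $(x_{n_j})$ such that $\lim_j \langle y_k^*, x_{n_j}\rangle$ exists for every $k$. Using the uniform bound $\|x_{n_j}\| \le M$ together with density of $\{y_k^*\}$ in $Y^*$, a three-$\eps$ argument upgrades this to convergence of $\langle y^*, x_{n_j}\rangle$ for every $y^* \in Y^*$.

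Finally I would identify the limit via reflexivity. Define $\La : Y^* \to \BR$ by $\La(y^*) = \lim_j \langle y^*, x_{n_j}\rangle$; linearity is immediate and $|\La(y^*)| \le M\|y^*\|$, so $\La \in Y^{**}$. By reflexivity of $Y$ there exists $x^* \in Y$ with $\La(y^*) = \langle y^*, x^*\rangle$ for all $y^* \in Y^*$, i.e.\ $x_{n_j} \rig x^*$ weakly in $Y$. Since every functional in $X^*$ restricts to a functional in $Y^*$ (via Hahn--Banach, restriction is surjective onto $Y^*$), the weak convergence $x_{n_j} \rig x^*$ holds in $X$ as well.

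The main obstacle is really just assembling the structural prerequisites correctly, rather than any single hard estimate: one must know that reflexivity descends to closed subspaces and that a separable reflexive space has separable dual, since without these two facts the diagonal argument has no countable dense set to work with and no candidate element of $Y$ to serve as the weak limit. Once these are in hand, the extraction and the use of reflexivity to realize $\La$ as evaluation at some $x^* \in Y$ are routine.
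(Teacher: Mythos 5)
Your proof is correct, and it is the classical argument for sequential weak compactness of bounded sets in reflexive Banach spaces: pass to the separable closed span, use that reflexivity is inherited by closed subspaces and that a separable reflexive space has separable dual, run a Cantor diagonal over a countable dense subset of $Y^*$, upgrade by a density/three-$\eps$ argument, realize the resulting limit functional in $Y^{**}$ as an element of $Y$ by reflexivity, and transfer back to $X$ by restriction of functionals. There is nothing to compare against in the paper itself: Lemma 2.12 is stated there without proof, being simply recalled as a standard fact from the functional analysis text cited as [7] (and in the paper's actual use the space is a Hilbert space, where reflexivity is automatic and one could even shortcut via Riesz representation and Banach--Alaoglu or a direct Gram--Schmidt-free diagonal argument on inner products against a countable dense set). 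Two of your supporting facts are invoked rather than proved --- that reflexivity descends to closed subspaces, and that separable reflexive implies separable dual --- and your parenthetical justifications for them are more gestures than arguments; both are genuinely standard theorems, so this is acceptable for a lemma of this nature, but if you wanted the proof self-contained those are the two places where real work remains.
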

We can now establish the following.

\begin{lemma}\label{2.13}
There exists $f^*\in\cH$ such that $\bT(f^*)=f^*$, $\bT^n(f^0)\ovs{weakly}{\lo}
f^*$.
\end{lemma}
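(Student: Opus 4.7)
The plan is to recognize Lemma 2.13 as an immediate application of Corollary 2.11 to the operator $\bT$ and the starting point $f^0$, so the proof reduces to checking the three hypotheses (i), (ii), (iii) of that corollary.

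First, I would verify hypothesis (i), non-expansiveness of $\bT$. This is exactly part B of Lemma 2.8, so nothing new needs to be done. Second, hypothesis (ii), asymptotic regularity $\|\bT^{n+1}(f^0)-\bT^n(f^0)\|\to 0$, is exactly part F of Lemma 2.8, again applied directly. So the only work left is to establish hypothesis (iii): the existence of some weakly convergent subsequence $\bT^{n_k}(f^0)\rightharpoonup v'$.

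For hypothesis (iii), I would invoke part D of Lemma 2.8, which provides a uniform bound $\|f^n\|\leq M$ for all $n$, where by Corollary 2.6 we have $f^n=\bT^n(f^0)$. Since $\cH$ is a Hilbert space, it is reflexive, so Lemma 2.12 applies and yields a subsequence $\bT^{n_k}(f^0)$ converging weakly to some $v'\in\cH$. This gives (iii).

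With (i), (ii), (iii) in place, Corollary 2.11 immediately delivers $f^*\in\cH$ with $\bT(f^*)=f^*$ and $\bT^n(f^0)\overset{weakly}{\lo}f^*$, which is exactly the claim. There is no real obstacle here beyond chaining together the previously established facts; the conceptual content was already packaged into Lemma 2.8 (parts B, D, F), Lemma 2.12, and Corollary 2.11, and the proof is essentially a one-line invocation once those are in hand.
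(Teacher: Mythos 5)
Your proposal is correct and follows exactly the same route as the paper: parts B and F of Lemma 2.8 give hypotheses (i) and (ii) of Corollary 2.11, while part D of Lemma 2.8 together with reflexivity of $\cH$ and Lemma 2.12 gives hypothesis (iii). Nothing is missing; the paper's own proof is the same one-line chaining of these facts.
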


\begin{proof}
Parts B,F of Lemma 2.8
 satisfy conditions (i), (ii) of Corollary 2.11. On the 
other hand, since $\cH$ is a reflexive space, Lemma 2.12
 and part D of Lemma 2.8 
satisfy condition (iii) of Corollary 2.11.
 Therefore Lemma 2.13 is proved, i.e., 
there exists $f^*\in\cH$ such that $\bT(f^*)=f^*$, $f^n=\bT^n(f^0) 
\ovs{weakly}{\lo} f^*$.
\end{proof}

In the last part of this subsection, we present the
 weak convergence theorem of 
sequence $\{f^n\}_{n=1}^{+\infty}$.

\begin{theorem}\label{2.14} 
{\bf (Weak Convergence)} Make the same assumptions as in Lemma 2.8, then we  have 

A) There exists $f^*\in\cH$ such that $f^n\ovs{weakly}{\lo} f^*$ and $f^*$ is 
the minimizer of $\Phi_{W,\bP}$.

B) If either there exist $i; 1\leq i\leq n$ such that $p_i>1$ or $N(K)=0$, 
then the minimizer of $\Phi_{W,\bP}$ is unique.
\end{theorem}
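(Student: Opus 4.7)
Part~A is immediate from the machinery already in place. Corollary~\ref{2.6} identifies $f^n = \bT^n(f^0)$, Lemma~\ref{2.13} produces $f^* \in \cH$ with $\bT(f^*) = f^*$ and $\bT^n(f^0) \to f^*$ weakly, and Lemma~\ref{2.7} says every fixed point of $\bT$ minimizes $\Phi_{W,\bP}$; chaining these three statements gives $f^n \to f^*$ weakly with $f^*$ a minimizer, which is exactly Part~A.

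For Part~B, I would argue by contradiction. Suppose $f_1 \neq f_2$ are both minimizers and put $h = f_1 - f_2$. Convexity of $\Phi_{W,\bP}$ (it is the sum of $\|Kf-g\|^2$ and the convex weighted $\ell^{p_\ga}$-terms) implies that the entire segment $\{f_2 + th : t \in [0,1]\}$ also minimizes, so the map $t \mapsto \Phi_{W,\bP}(f_2 + th)$ is constant on $[0,1]$. Writing
\begin{equation*}
\Phi_{W,\bP}(f_2+th) = \|K(f_2+th) - g\|^2 + \sum_{\ga\in\Ga} w_\ga\,|f_{2,\ga}+th_\ga|^{p_\ga},
\end{equation*}
both summands on the right are convex functions of $t$; since their sum is constant, each one must in fact be affine in $t$. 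The first summand is a quadratic whose $t^2$-coefficient is $\|Kh\|^2$, so affinity forces $Kh = 0$. The $\ga$-th contribution to the second summand is affine in $t$ iff $h_\ga = 0$ (when $p_\ga > 1$, by strict convexity of $|\cdot|^{p_\ga}$) or iff $f_{2,\ga}$ and $f_{2,\ga}+h_\ga$ have a common sign (when $p_\ga = 1$, so that the kink of $|\cdot|$ falls outside $[0,1]$).

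Under the hypothesis $N(K) = \{0\}$, the relation $Kh = 0$ by itself gives $h = 0$, contradicting $f_1 \neq f_2$. Under the hypothesis that some $p_i > 1$, the coefficient analysis above yields $h_\ga = 0$ for every $\ga \in \Ga_i$, and the remaining task is to propagate this to $h = 0$ on all of $\Ga$. This propagation step is what I expect to be the main obstacle. My plan here is to exploit the fixed-point identities $f_j = \bS_{W,\bP}(f_j + K^*(g - Kf_j))$ from Corollary~\ref{2.4}: since $Kh = 0$, the two vectors $f_1 + K^*(g-Kf_1)$ and $f_2 + K^*(g-Kf_2)$ differ precisely by $h$, and a coordinate-wise comparison using the explicit soft-thresholding formulas of Lemma~\ref{2.1} should pin down $h_\ga = 0$ on each remaining index $\ga \in \bigcup_{j:\,p_j=1} \Ga_j$ and close the contradiction.
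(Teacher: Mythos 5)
Part A of your plan is exactly the paper's proof: Corollary 2.6, Lemma 2.13 and Lemma 2.7 chained together, nothing to add. For Part B, your segment-convexity analysis is in essence the paper's midpoint argument, but you carry it out more carefully: you correctly observe that strict convexity is only available on a block $\Ga_i$ with $p_i>1$, so that convexity alone yields only $Kh=0$ together with $h_\ga=0$ for $\ga\in\bigcup_{i:\,p_i>1}\Ga_i$, and you correctly flag the propagation of $h=0$ to the $p_\ga=1$ coordinates as the real difficulty. The paper's proof simply asserts that the inequality ``becomes strict'', which presupposes that $f_1$ and $f_2$ already differ somewhere on a block with $p_i>1$; it never addresses the case you isolate.

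The repair you propose does not close the gap. Writing $b=f_2+K^*(g-Kf_2)$, the fixed-point identities together with $Kh=0$ give $S_{w_\ga,1}(b_\ga+h_\ga)=S_{w_\ga,1}(b_\ga)+h_\ga$ on the $p_\ga=1$ coordinates; but $S_{w,1}$ is an exact translate of the identity on each of its two unbounded branches, so this relation is satisfied by every $h_\ga$ that keeps $b_\ga$ and $b_\ga+h_\ga$ on the same branch, and no contradiction results. In fact no argument can succeed, because Part B is false as stated. Take $\cH=\BR^3$ with the standard basis, $\Ga_1=\{1\}$ with $p_1=2$, $\Ga_2=\{2,3\}$ with $p_2=1$, all weights equal to $1$, and $K(x,y,z)=\eps(x,\,y+z)$ with $\eps$ small, so that $\|K\|<1$ and the hypothesis ``some $p_i>1$'' holds. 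Then
$$\Phi_{W,\bP}(x,y,z)=|\eps x-g_1|^2+x^2+|\eps(y+z)-g_2|^2+|y|+|z|,$$
and since $|y|+|z|\geq|y+z|$ with equality on all of $\{y\geq0,\,z\geq0\}$, the set of minimizers is $\{x^*\}\times\{(y,z):y,z\geq0,\ y+z=s^*\}$, where $s^*>0$ is the soft-thresholded minimizer of $s\mapsto|\eps s-g_2|^2+|s|$ for $g_2$ large enough. The minimizer is therefore not unique. Uniqueness requires a stronger hypothesis, e.g.\ $N(K)=\{0\}$, or $p_i>1$ for every $i$, or that no nonzero element of $N(K)$ is supported in $\bigcup_{i:\,p_i=1}\Ga_i$. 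In short: the obstacle you identified is genuine, your proposed way around it fails, and the same gap is present, unacknowledged, in the paper's own proof.
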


\begin{proof}
Part (A) is the direct result of Lemma 2.7, 2.13.
 For part (B), note that the following 
inequality becomes strict for one $p_i>1$:
$$\sum_{\ga\in\Ga_i} w_\ga \left| (\f{f_1+f_2}{2})_\ga\right|^{p_i} \leq 
\f{\sum_{\ga\in\Ga_i} w_\ga|(f_1)_\ga|^{p_i}+ \sum_{\ga\in\Ga_i} 
w_\ga|(f_2)_\ga|^{p_i}}{2}$$
Consequently
$$\sum_{i=1}^n |\!|\!|\f{f_1+f_2}{2}|\!|\!|^{p_i}_{W_i,p_i} < \f{\sum_{i=1}^n 
|\!|\!|f_1|\!|\!|^{p_i}_{W_i,p_i} +\sum_{i=1}^n 
|\!|\!|f_2|\!|\!|^{p_i}_{W_i,p_i}}{2}$$
Then, we have
$$|\!|\!| \f{f_1+f_2}{2} |\!|\!|^{\bP}_{W,\bP} < 
\f{|\!|\!|f_1|\!|\!|^{\bP}_{W,\bP}+|\!|\!|f_2|\!|\!|^{\bP}_{W,\bP}}{2}.$$
\end{proof}
\subsection{Strong Convergence of the $f^n$}
First, let us introduce the following shorthand notations:
$$f^*=w-\lim f^n,\quad u_n=f^n-f^*,\quad h=f^*+K^*(g-Kf^*)$$

\begin{theorem}\label{2.15}
{\bf (Strong Convergence)} Considering all assumptions of Lemma 2.8, we have 

A) $\|Ku^n\|\lo 0$ for $n\lo\infty$.

B) $\|\bS_{W,\bP}(h+u^n)-\bS_{W,\bP}(h)-u^n\|\lo 0$ for $n\lo\infty$.

C) If for some $a\in\cH$, and some sequence $(v^n)_{n\in\BN}$, 
$w-\lim_{n\rig\infty} v^n=0$ and $\lim_{n\rig\infty}\|\bS_{W,\bP}(a+v^n) 
-\bS_{W,\bP}(a)-v^n\|=0$ then $\|v^n\|\lo 0$ for $n\lo\infty$.

D) $\|u^n\|\lo 0$ for $n\lo\infty$.
\end{theorem}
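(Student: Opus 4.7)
For Part A, my plan is to combine non-expansiveness of $\bS_{W,\bP}$ (Lemma 2.8(A)) with the fixed-point property $\bT(f^*)=f^*$. Since $f^{n+1}=\bT(f^n)$ and $\bT$ differs from $\bS_{W,\bP}$ only by the affine shift $K^*(g-K\,\cdot\,)$, this yields
\[
\|u^{n+1}\|^2 \;\le\; \|(I-K^*K)u^n\|^2 \;=\; \|u^n\|^2 - 2\|Ku^n\|^2 + \|K^*Ku^n\|^2.
\]
Using $\|K^*Ku^n\|^2 \le \|K\|^2 \|Ku^n\|^2$ and the strict bound $\|K\|<1$ from Lemma 2.8, this rearranges to $(2-\|K\|^2)\|Ku^n\|^2 \le \|u^n\|^2 - \|u^{n+1}\|^2$; telescoping gives $\sum_n \|Ku^n\|^2 < \infty$, hence $\|Ku^n\|\to 0$. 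For Part B, I would exploit the identities $\bS_{W,\bP}(h+u^n-K^*Ku^n)=\bT(f^n)=f^{n+1}$ and $\bS_{W,\bP}(h)=\bT(f^*)=f^*$ to write
\[
\bS_{W,\bP}(h+u^n)-\bS_{W,\bP}(h)-u^n \;=\; \bigl[\bS_{W,\bP}(h+u^n)-\bS_{W,\bP}(h+u^n-K^*Ku^n)\bigr] + (u^{n+1}-u^n).
\]
The bracketed term has norm at most $\|K^*Ku^n\|\le\|K\|\|Ku^n\|\to 0$ by Part A, and $\|u^{n+1}-u^n\|=\|f^{n+1}-f^n\|\to 0$ by Lemma 2.8(F).

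Part C is where the real work lies; the approach adapts Lemma 3.17 of [3] to the mixed-exponent setting. Write $D_\ga^n=S_{w_\ga,p_\ga}(a_\ga+v_\ga^n)-S_{w_\ga,p_\ga}(a_\ga)-v_\ga^n$, so $\sum_\ga |D_\ga^n|^2\to 0$ by hypothesis and $v_\ga^n\to 0$ coordinatewise from the weak convergence of $v^n$. For any $\epsilon>0$, since $a\in\cH$ I would choose a finite $F\subset \Ga$ with $\sum_{\ga\notin F}|a_\ga|^2<\epsilon^2$; on the finite part $F$, coordinatewise convergence upgrades to $\sum_{\ga\in F}|v_\ga^n|^2\to 0$. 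Off $F$, where $|a_\ga|$ is $\epsilon$-small, the goal is to show that $|v_\ga^n|$ is controlled by $|D_\ga^n|$ plus an $\epsilon$-contribution, using the uniform weight bound $w_\ga\ge c>0$ and the explicit form of $S_{w,p}$: for $p_\ga=1$, the soft-thresholding ``dead zone'' $[-w_\ga/2,w_\ga/2]$ forces $D_\ga^n=-v_\ga^n$ whenever both $|a_\ga|$ and $|a_\ga+v_\ga^n|$ lie inside it, while near its boundary a direct case analysis yields an analogous bound; for $p_\ga>1$, the strict monotone shrinkage of $S_{w,p}$ (inherited from the convex function $F_{w,p}$ of Lemma 2.1) produces a quantitative lower bound on $|D_\ga^n|$ in terms of $|v_\ga^n|$, uniform in $\ga$ under $w_\ga\ge c$. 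The decomposition $\Ga=\Ga_1\cup\cdots\cup\Ga_n$ with $p_\ga$ constant on each $\Ga_i$ means these estimates reduce to the one-exponent argument of [3] applied in parallel on each $\Ga_i$, and the disjointness of the $\Ga_i$ lets us sum cleanly. Summing and letting $\epsilon\to 0$ delivers $\|v^n\|\to 0$; the case-by-case quantitative estimate off $F$ is the main obstacle.

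For Part D, I would simply combine Parts B and C. Since $f^n\to f^*$ weakly by Theorem 2.14(A), $u^n\to 0$ weakly; Part B is exactly the hypothesis of Part C with $a=h$ and $v^n=u^n$; applying Part C yields $\|u^n\|\to 0$, i.e., the claimed strong convergence $f^n\to f^*$.
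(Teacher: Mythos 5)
Your proposal is correct and follows essentially the same route as the paper: parts A and B are the standard fixed-point/non-expansiveness arguments from [3] (which the paper merely cites as ``similar proofs''), and parts C and D reproduce the paper's strategy of splitting $\Gamma$ into the blocks $\Gamma_i$, running the $p_\gamma>1$ (uniform contraction of $S_{w,p}$ on bounded sets) and $p_\gamma=1$ (dead-zone plus counting) arguments of [3] in parallel on each block, summing over the finitely many blocks, and then feeding B into C with $a=h$, $v^n=u^n$. Your part C remains a sketch, but only at the same level of detail as the paper's own proof, which likewise defers the per-exponent quantitative estimates entirely to [3].
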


\begin{proof}
Parts A,B have similar proofs to corresponding lemmas in [3]. 

C)

We know for every $i$, $p_i\geq 1$ and ${\bP}=\{p_1,\dots,p_n\}$.
Suppose $D_1=\{i| p_i>1\}$ and $D_2=\{i| p_i=1\}$. Consquently 
$$D_1\cup D_2=\{1,2,\dots,n\}$$
To prove this part, it is sufficient to consider the two distinct cases $i\in 
D_1$, $i\in D_2$, as has been done for $p>1$ and $p=1$, respectively, in [3]. 
We will have:
\begin{align*}
&\forall i\in D_1\quad \sum_{\ga\in\Ga_i} |v_\ga^n|^2\lo 0\quad as\quad 
n\lo\infty,\\
&\forall i\in D_2\quad \sum_{\ga\in\Ga_i} |v_\ga^n|^2\lo 0\quad as\quad 
n\lo\infty
\end{align*}
Then 
$$\|v^n\|^2=\sum_{\ga\in\Ga}|v^n_\ga|^2=\sum_{i=1}^n \sum_{\ga\in\Ga_i} 
|v_\ga^n|^2\lo 0\quad as \quad n\lo\infty$$
Therefore,
$$\|v^n\|\lo 0\quad as \quad n\lo\infty.$$
\end{proof}

\subsection{A Regularization Theorem}
The following Lemma has been estabilished in [3]:

\begin{lemma}\label{2.16}
Functions $S_{w,p}$ from $R$ to itself, defined in Lemma 2.1 satisfy 
$$|S_{w,p}(x)-x| \leq \f{wp}{2} |x|^{p-1}$$
\end{lemma}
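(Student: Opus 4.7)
The plan is to split the argument according to the two regimes in the piecewise definition of $S_{w,p}$, namely $p=1$ and $p>1$, since the underlying mechanism (inversion of $F_{w,p}$ vs.\ the explicit soft-thresholding formula) is different in the two cases.

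For $p=1$ the inequality reduces to $|S_{w,1}(x)-x|\leq w/2$, because $|x|^{p-1}=1$. This is immediate from the three-branch description of $S_{w,1}$ recorded in Lemma 2.1: in the central region $|x|\leq w/2$ one has $S_{w,1}(x)=0$, hence $|S_{w,1}(x)-x|=|x|\leq w/2$, while in the two outer regions $S_{w,1}(x)-x$ equals exactly $\mp w/2$ depending on the sign of $x$.

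For $p>1$ I would set $y=S_{w,p}(x)$ and read off from $x=F_{w,p}(y)=y+\f{wp}{2}\Sign(y)|y|^{p-1}$ the exact identity
$$|x-y|=\f{wp}{2}|y|^{p-1}.$$
The whole task therefore collapses to the comparison $|y|^{p-1}\leq |x|^{p-1}$. Because $F_{w,p}$ preserves sign, $x$ and $y$ share a sign; for $y>0$ the relation $x=y+\f{wp}{2}y^{p-1}\geq y\geq 0$ yields $|x|\geq |y|$, and the symmetric statement holds for $y<0$, while $y=0$ forces $x=0$ so both sides of the claimed inequality vanish. Since $p-1\geq 0$, monotonicity of $t\mapsto t^{p-1}$ on $[0,\infty)$ promotes $|y|\leq |x|$ to $|y|^{p-1}\leq |x|^{p-1}$, and combining with the identity above closes the estimate.

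The only real subtlety I anticipate is bookkeeping of signs and the trivial boundary case $y=0$ in the $p>1$ step; once the key observation $|y|\leq |x|$ (which says that inverting $F_{w,p}$ can only shrink the argument) is isolated, everything else is a direct unwinding of the definitions.
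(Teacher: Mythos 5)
Your argument is correct and is essentially the standard one: the paper does not reprove this lemma but simply cites it from [3], where the proof proceeds exactly as you do --- trivially for $p=1$, and for $p>1$ via the identity $|F_{w,p}(y)-y|=\f{wp}{2}|y|^{p-1}$ together with the observation that $|S_{w,p}(x)|\leq |x|$ because $F_{w,p}$ is sign-preserving and moves points away from the origin. Nothing is missing.
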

To prove the regularization theorem, we will need the following lemma.

\begin{lemma}\label{2.17}
If the sequence of vectors $(v_k)_{k\in\BN}$ converges weakly in $\cH$ to 
$v$, and $\lim_{k\rig\infty} |\!|\!|v_k|\!|\!|_{W,\bP}^{\bP}=|\!|\!|v|\!|\!|
^{\bP}_{W,\bP}$, 
then $(v_k)_{k\in\BN}$ converges to $v$ in the $\cH$-norm, i.e.
$$\lim_{k\rig\infty} \|v-v_k\|=0.$$
\end{lemma}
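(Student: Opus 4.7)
The plan is to run the classical Radon--Riesz / Kadec--Klee trick, adapted to the mixed weighted $\ell^p$-functional, by splitting the coordinate sum over $\Ga$ into a large finite ``head'' $F$ and a ``tail'' $\Ga\setminus F$. Weak convergence gives coordinatewise convergence $(v_k)_\ga=\langle v_k,\var_\ga\rangle\to v_\ga$, and this alone handles the head; the hypothesis that $|\!|\!|v_k|\!|\!|^{\bP}_{W,\bP}\to |\!|\!|v|\!|\!|^{\bP}_{W,\bP}$ will pin down the mass of the tail, and then the inequality used in the proof of Lemma~2.8(D) will convert a small mixed-norm tail into a small $\ell^2$ tail.

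First I would fix $\eps>0$ and, using that $|\!|\!|v|\!|\!|^{\bP}_{W,\bP}=\sum_{\ga\in\Ga}w_\ga|v_\ga|^{p_\ga}<\infty$, pick a finite subset $F\subs\Ga$ with $\sum_{\ga\notin F}w_\ga|v_\ga|^{p_\ga}<\eps$. On the finite set $F$, weak convergence yields $(v_k)_\ga\to v_\ga$ for each $\ga\in F$, hence both $\sum_{\ga\in F}|v_\ga-(v_k)_\ga|^2\to 0$ and $\sum_{\ga\in F}w_\ga|(v_k)_\ga|^{p_\ga}\to \sum_{\ga\in F}w_\ga|v_\ga|^{p_\ga}$. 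Subtracting this from the assumed convergence $|\!|\!|v_k|\!|\!|^{\bP}_{W,\bP}\to|\!|\!|v|\!|\!|^{\bP}_{W,\bP}$ gives
$$\sum_{\ga\notin F}w_\ga|(v_k)_\ga|^{p_\ga}\;\lo\;\sum_{\ga\notin F}w_\ga|v_\ga|^{p_\ga}<\eps,$$
so for all sufficiently large $k$ this tail is $<2\eps$.

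Next I would bound the $\ell^2$-tail by the mixed-norm tail. Intersecting with each $\Ga_i$ and repeating verbatim the computation from Lemma~2.8(D) (which used only $w_\ga\geq c$ and $1\leq p_i\leq 2$), I obtain, restricted to $\ga\notin F$,
$$\sum_{\ga\in\Ga_i,\,\ga\notin F}|(v_k)_\ga|^2\;\leq\; c^{-\f{2}{p_i}}\Bigl(\sum_{\ga\in\Ga_i,\,\ga\notin F}w_\ga|(v_k)_\ga|^{p_i}\Bigr)^{\f{2}{p_i}}\;\leq\; c^{-\f{2}{p_i}}(2\eps)^{\f{2}{p_i}},$$
and the same bound with $\eps$ in place of $2\eps$ for $v$. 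Summing over $i=1,\dots,n$ and applying the triangle inequality $|v_\ga-(v_k)_\ga|^2\leq 2(|v_\ga|^2+|(v_k)_\ga|^2)$, the tail contribution to $\|v-v_k\|^2$ is controlled by $2\sum_{i=1}^n c^{-2/p_i}\bigl[(2\eps)^{2/p_i}+\eps^{2/p_i}\bigr]$, which tends to $0$ as $\eps\to 0$. Combined with the head estimate from Step~1, this shows $\limsup_k\|v-v_k\|^2$ is arbitrarily small, so $\|v-v_k\|\to 0$.

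The only subtle step is the transition from a small mixed $\ell^{p_i}$-tail to a small $\ell^2$-tail, since the $p_i$ differ on the different pieces $\Ga_i$; this is why I would carry out the Lemma~2.8(D) calculation separately on each $\Ga_i$ before summing. Everything else is bookkeeping once the weak $\Rightarrow$ coordinatewise convergence is invoked on the finite head $F$.
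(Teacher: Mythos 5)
Your proof is correct, but it follows a genuinely different route from the paper's. The paper first reduces the claim to showing $\lim_k\|v_k\|=\|v\|$ (via the standard fact that weak convergence plus norm convergence implies strong convergence in a Hilbert space), and then establishes norm convergence coefficientwise: using the uniform bound $|v_{k,\ga}|\leq M$ and the inequality $|a^r-b^r|\leq r|a-b|\max\{a,b\}^{r-1}$ it dominates $\bigl||v_{k,\ga}|^2-|v_\ga|^2\bigr|$ by $M'\bigl||v_{k,\ga}|^{p_\ga}-|v_\ga|^{p_\ga}\bigr|$, writes the latter as $|v_{k,\ga}|^{p_\ga}+|v_\ga|^{p_\ga}-2\min(|v_{k,\ga}|,|v_\ga|)^{p_\ga}$, and kills the sum by dominated convergence applied to the min-sequence together with the hypothesis on the mixed norms. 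You instead attack $\|v-v_k\|$ directly with a head/tail decomposition: the finite head is handled by coordinatewise (weak) convergence, the mixed-norm mass of the tail of $v_k$ is pinned down by subtracting the head from the convergent totals, and the tail is converted from mixed-norm smallness to $\ell^2$ smallness by rerunning the Lemma~2.8(D) estimate on each $\Ga_i\setminus F$ separately --- which is legitimate, since that computation uses only $w_\ga\geq c$ and $1\leq p_i\leq 2$ and is stable under restriction to a subset of indices. Your version buys a more self-contained argument (no appeal to the weak-plus-norm-convergence fact, no dominated convergence, no uniform coefficient bound $M$), at the cost of the $\eps$/$2\eps$ bookkeeping; the paper's version is shorter once one grants those standard tools and isolates the reusable statement that the mixed norm is ``sequentially Kadec--Klee'' through the ordinary norm. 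Both hinge on the same underlying point, namely that on each block $\Ga_i$ control of $\sum w_\ga|\cdot|^{p_i}$ controls $\sum|\cdot|^2$.
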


\begin{proof}
It is a standard result that if $w-\lim_{k\rig\infty} v_k=v$, and 
$\lim_{k\rig\infty} \|v_k\|=\|v\|$, then 
$$\lim_{k\rig\infty} \|v-v_k\|^2 =\lim_{k\rig\infty} (\|v\|^2+ 
\|v_k\|^2-2<v,v_k>) = (\|v\|^2+\|v\|^2-2<v,v>)=0.$$
We thus need to prove only that $\lim_{k\rig\infty} \|v_k\|=\|v\|$. 

Since, the $v_k$ converge weakly, they are uniformly bounded. It follows that 
the $|v_{k,\ga}|=|<v_k,\var_\ga>|$ are bounded uniformly in $k$ and $\ga$ by 
some finite number $M$. For $a,b>0$ and $1\leq r\leq 2$, we have 
$$|a^r-b^r| \leq r|a-b| \Max\{a,b\}^{r-1}$$
Consequently 
$$\|v_{k,\ga}|^2-|v_\ga|^2 | = |(|v_{k,\ga}|^{p_\ga})^{\f{2}{p_\ga}} 
-(|v_\ga|^{p_\ga})^{\f{2}{p_\ga}}| \leq \f{2}{p_\ga} M^{2-p_\ga} 
\|v_{k,\ga}|^{p_\ga}-|v_\ga|^{p_\ga}|$$
Let $M':=\Max\{\f{2}{p_i} M^{2-p_i}; 1\leq i\leq n\}$ then 
$$\forall \ga\in\Ga \; \forall k\in N\; ||v_{k,\ga}|^2-|v_\ga|^2| \leq 
M'||v_{k,\ga}|^{p_\ga}-|v_\ga|^{p_\ga}|$$
Since the $v_k$ convergence weakly, we have
\begin{align*}
\forall& \ga\in\Ga; <v_k,\var_\ga> \lo <v,\var_\ga> \quad as \quad k\lo\infty\\
\Lo& \forall \ga\in\Ga ; |v_{k,\ga}|  \lo |v_\ga| \quad as \quad k\lo\infty
\end{align*}
Define now $u_{k,\ga}=\min(|v_{k,\ga}|,|v_\ga|)$. Clearly $\forall \ga\in\Ga$: 
$\lim_{k\rig\infty} u_{k,\ga}=|v_\ga|$; since $\sum_{\ga\in\Ga_i} 
w_\ga|v_\ga|^{p_i}<\infty$, it follows by the dominated convergence theorem 
that $\lim_{k\rig\infty} \sum_{\ga\in\Ga_i}w_\ga 
u^{p_i}_{k,\ga}=\sum_{\ga\in\Ga_i} w_\ga|v_\ga|^{p_i}$, consequently
\begin{align*}
\lim_{k\rig\infty} \sum_{\ga\in\Ga} w_\ga u_{k,\ga}^{p_\ga} &=
\lim_{k\rig\infty} \sum_{i=1}^n\sum_{\ga\in\Ga_i}w_\ga u_{k,\ga}^{p_i}\\
&=\sum_{i=1}^n \lim_{k\rig\infty}\sum_{\ga\in\Ga_i}w_\ga u_{k,\ga}^{p_i}\\
&=\sum_{i=1}^n\sum_{\ga\in\Ga_i} w_\ga|v_\ga|^{p_i} \\
&=\sum_{\ga\in\Ga} w_\ga |v_\ga|^{p_\ga} \tag{$6$}
\end{align*}
On the other hand, we have
\begin{align*}
| \|v_k\|^2-\|v\|^2|
 & = \left| \sum_{\ga\in\Ga} |v_{k,\ga}|^2-\sum_{\ga\in\Ga}|v_\ga|^2\right| \\
&\leq \sum_{\ga\in\Ga} | |v_{k,\ga}|^2-|v_\ga|^2| \\
&\leq M' \sum_{\ga\in\Ga} | |v_{k,\ga}|^{p_\ga} - |v_\ga|^{p_\ga}|\\
&\leq \f{M'}{c} \sum_{\ga\in\Ga} w_\ga | |v_{k,\ga}|^{p_\ga} - 
|v_\ga|^{p_\ga}|\\
&=\f{M'}{c}\sum_{\ga\in\Ga} w_\ga (|v_{k,\ga}|^{p_\ga} +|v_\ga|^{p_\ga} 
-2u_{k,\ga}^{p_\ga}) \\
&=\f{M'}{c} \left( \sum_{\ga\in\Ga} w_\ga |v_{k,\ga}|^{p_\ga} 
+\sum_{\ga\in\Ga} w_\ga | v_\ga|^{p_\ga} -2\sum_{\ga\in\Ga} w_\ga 
u_{k,\ga}^{p_\ga}\right).
\end{align*}
Since we have (6), the last expression tends to 0 as $k$ tends to $\infty$.
\end{proof}

The following existence lemma is necessary for providing Proposition 2.19 and 
Theorem 2.20.

\begin{lemma}\label{2.18}
Suppose $S=N(K)+f_0=\{f: K(f)=K(f_0)\}$ and assume that either there exists 
$j$ such that $p_j>1$ or $N(K)=\{0\}$. Then there is a
 unique minimal element 
with regard to $|\!|\!|\cdot|\!|\!|^{\bP}_{W,\bP}$ in $S$.
\end{lemma}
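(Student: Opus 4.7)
The plan is to establish existence via the direct method of the calculus of variations and then derive uniqueness from strict convexity.

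For existence, I would pick a minimizing sequence $(f_k)\subset S$ with $|\!|\!|f_k|\!|\!|^{\bP}_{W,\bP}\to \inf_{f\in S}|\!|\!|f|\!|\!|^{\bP}_{W,\bP}$. The uniform bound $|\!|\!|f_k|\!|\!|^{\bP}_{W,\bP}\leq C$ together with $w_\ga\geq c>0$ yields a uniform norm bound $\|f_k\|\leq M$ by exactly the interpolation argument used in part (D) of Lemma \ref{2.8} (each $|\!|\!|f_k|\!|\!|_{W_i,p_i}^{p_i}\leq C$ gives $\sum_{\ga\in\Ga_i}|(f_k)_\ga|^2\leq c^{-2/p_i}C^{2/p_i}$, then sum over $i$). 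Since $\cH$ is reflexive, Lemma \ref{2.12} supplies a subsequence with $f_{k_l}\rightharpoonup f^*$. The affine set $S=f_0+N(K)$ is closed and convex, hence weakly closed, so $f^*\in S$. The final ingredient is weak lower semicontinuity of $|\!|\!|\cdot|\!|\!|^{\bP}_{W,\bP}$: for any finite $F\subset\Ga$ the truncation $f\mapsto \sum_{\ga\in F} w_\ga|\langle f,\var_\ga\rangle|^{p_\ga}$ is weakly continuous, and Fatou (or monotone passage to the limit along an exhaustion $F_n\uparrow\Ga$) then yields $|\!|\!|f^*|\!|\!|^{\bP}_{W,\bP}\leq\liminf_l|\!|\!|f_{k_l}|\!|\!|^{\bP}_{W,\bP}$. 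Hence $f^*$ attains the infimum.

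For uniqueness, I would split into the two alternative hypotheses. If $N(K)=\{0\}$, then $S=\{f_0\}$ and uniqueness is automatic. Otherwise some $p_j>1$; supposing $f_1,f_2\in S$ are both minimizers, set $\tilde f=(f_1+f_2)/2\in S$. Minimality forces equality in the convexity inequality $|\!|\!|\tilde f|\!|\!|^{\bP}_{W,\bP}\leq \tfrac{1}{2}(|\!|\!|f_1|\!|\!|^{\bP}_{W,\bP}+|\!|\!|f_2|\!|\!|^{\bP}_{W,\bP})$, and in turn equality in each scalar term $w_\ga|(\tilde f)_\ga|^{p_\ga}\leq \tfrac{1}{2}w_\ga(|(f_1)_\ga|^{p_\ga}+|(f_2)_\ga|^{p_\ga})$. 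Strict convexity of $t\mapsto|t|^{p_j}$ forces $(f_1)_\ga=(f_2)_\ga$ for every $\ga\in\Ga_j$, which is the coordinate agreement one then leverages, together with $f_1-f_2\in N(K)$, to conclude $f_1=f_2$.

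The main obstacle I anticipate is the closing step of the uniqueness argument in the truly mixed case. Strict convexity of $t\mapsto|t|^{p_i}$ is only available when $p_i>1$, whereas on each $\Ga_i$ with $p_i=1$ the inequality $|(x+y)/2|\leq(|x|+|y|)/2$ may be an equality without $x=y$. Extracting $f_1=f_2$ from coordinate agreement on $\Ga_j$ alone therefore requires a careful interplay between that agreement and the affine constraint $f_1-f_2\in N(K)$; this is where the hypothesis must really be used, and where I would spend most of the work. The existence half, by contrast, is comparatively routine once the weight-based coercivity already developed in part (D) of Lemma \ref{2.8} is cited.
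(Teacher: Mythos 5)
Your existence argument is correct, and it is a genuinely different (and sounder) route than the paper's. The paper instead takes, for each $i$, an element $f_i\in S$ of minimal $|\!|\!|\cdot|\!|\!|_{W_i,p_i}$-norm and splices their coordinates together, setting $f^\dagger=\sum_{i=1}^n\sum_{\ga\in\Ga_i}\langle f_i,\var_\ga\rangle\var_\ga$; this makes $|\!|\!|f^\dagger|\!|\!|^{\bP}_{W,\bP}$ equal to the sum of the $n$ separate minima, but nothing guarantees that the spliced element $f^\dagger$ lies in $S$ (and the existence of each $f_i$ is itself asserted without proof). Your direct-method argument --- coercivity from part (D) of Lemma \ref{2.8}, weak compactness from Lemma \ref{2.12}, weak closedness of the closed affine set $S$, and weak lower semicontinuity of $|\!|\!|\cdot|\!|\!|^{\bP}_{W,\bP}$ via Fatou --- avoids this entirely and is complete, up to the harmless caveat that $S$ should contain at least one element of finite $|\!|\!|\cdot|\!|\!|^{\bP}_{W,\bP}$.

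For uniqueness, the obstacle you flagged is not merely a step you ran out of time for; it is a genuine gap that cannot be closed, and the paper's own proof silently commits the error you were careful not to. Strict convexity of $t\mapsto|t|^{p_j}$ gives only $(f_1)_\ga=(f_2)_\ga$ for $\ga\in\Ga_j$, and this together with $f_1-f_2\in N(K)$ does not force $f_1=f_2$. Concretely, take $\cH=\cH'=\BR^3$ with the standard basis, $\Ga_1=\{1\}$, $p_1=2$, $\Ga_2=\{2,3\}$, $p_2=1$, all weights equal to $1$, and $K(x_1,x_2,x_3)=\f{1}{2}(x_1,x_2+x_3,0)$, so that $N(K)$ is spanned by $(0,1,-1)$. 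With $f_0=(0,1,0)$ one has $S=\{(0,1+t,-t):t\in\BR\}$ and $|\!|\!|(0,1+t,-t)|\!|\!|^{\bP}_{W,\bP}=|1+t|+|t|$, which attains its minimum $1$ at every $t\in[-1,0]$: the hypothesis that some $p_j>1$ holds, yet the minimizer is not unique. The paper's proof hides this by asserting $|\!|\!|\f{f_1+f_2}{2}|\!|\!|^{p_j}_{W_j,p_j}<\f{1}{2}\bigl(|\!|\!|f_1|\!|\!|^{p_j}_{W_j,p_j}+|\!|\!|f_2|\!|\!|^{p_j}_{W_j,p_j}\bigr)$ for arbitrary $f_1\neq f_2$, which is false when $f_1$ and $f_2$ agree on the $\Ga_j$-coordinates (as they do in the example above, where both $(0,1,0)$ and $(0,0,1)$ are minimizers). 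So your instinct was right: as stated, the uniqueness half needs a stronger hypothesis (for instance $p_i>1$ for all $i$, or $N(K)=\{0\}$, or a compatibility condition between $N(K)$ and the blocks with $p_i=1$) before it can be proved.
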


\begin{proof}
{\bf uniqueness:} If
 $N(K)=0$, then $S=\{f_0\}$. For the case where there exists $j$ such that 
$p_j>1$, suppose $f_1,f_2$ are minimal elements with regard to 
$|\!|\!|\cdot|\!|\!|^{\bP}_{W,\bP}$ in $S$ and $f_1\neq f_2$;
consequently $|\!|\!|f_1|\!|\!|^{\bP}_{W,\bP}=|\!|\!|f_2|\!|\!|_{W,\bP}^{\bP}$,
\begin{align*}
p_j>1 \Lo |\!|\!| \f{f_1+f_2}{2} |\!|\!|^{p_j}_{W_j,p_j}& < 
\f{|\!|\!|f_1|\!|\!|^{p_j}_{W_j,p_j}+|\!|\!|f_2|\!|\!|^{p_j}_{W_j,p_j}}{2}\\
\Lo|\!|\!|f_1|\!|\!|^{\bP}_{W,\bP} & \leq 
|\!|\!| \f{f_1+f_2}{2}|\!|\!|^{\bP}_{W,\bP} = 
|\!|\!| \f{f_1+f_2}{2}|\!|\!|^{p_1}_{W_1,p_1} +\dots+
|\!|\!| \f{f_1+f_2}{2}|\!|\!|^{p_n}_{W_n,p_n} \\
&< \f{|\!|\!|f_1|\!|\!|^{p_1}_{W_1,p_1}
 + |\!|\!|f_2|\!|\!|^{p_1}_{W_1,p_1}}{2} + \dots + 
\f{|\!|\!|f_1|\!|\!|^{p_n}_{W_n,p_n} + |\!|\!|f_2|\!|\!|^{p_n}_{W_n,p_n}}{2}\\
&=\f{|\!|\!|f_1|\!|\!|^{\bP}_{W,\bP} +|\!|\!|f_2|\!|\!|^{\bP}_{W,\bP}}{2} \\
&=\f{2|\!|\!| f_2|\!|\!|^{\bP}_{W,\bP}}{2}\\
&=|\!|\!| f_2|\!|\!|^{\bP}_{W,\bP}
\end{align*}
Therefore
$$|\!|\!|f_1|\!|\!|^{\bP}_{W,\bP} < |\!|\!|f_2|\!|\!|^{\bP}_{W,\bP}$$
which is a contradiction.\\[0.2cm]
{\bf Existence:}
Note that $|\!|\!|\cdot|\!|\!|^{\bP}_{W,\bP}$ is not a norm, but 
$|\!|\!|\cdot|\!|\!|_{W_i,p_i}$ 
are norms for $1\leq i\leq n$. Suppose $f_i$ is the element of minimum 
$|\!|\!|\cdot|\!|\!|_{W_i,p_i}$-norm in $S$.

Now, let $f^\dagger=\sum_{i=1}^n \sum_{\ga\in\Ga_i} <f_i,\var_\ga>\var_\ga$,
then we have
\begin{align*}
|\!|\!|f^\dagger |\!|\!|^{\bP}_{W,\bP} &=|\!|\!| 
f^\dagger|\!|\!|^{p_1}_{W_1,p_1} 
+\dots+|\!|\!|f^\dagger|\!|\!|^{p_n}_{W_n,p_n} \\
&=|\!|\!|f_1|\!|\!|^{p_1}_{W_1,p_1} +\dots+|\!|\!| f_n|\!|\!|^{p_n}_{W_n,p_n}
\end{align*}
because $<f^\dagger,\var_\ga>=<f_i,\var_\ga>$ for $\ga\in\Ga_i$. Consequently, 
$f^\dagger$ is a minimum with regard to $|\!|\!|\cdot|\!|\!|^{\bP}_{W,\bP}$ in 
$S$.
\end{proof}

\begin{proposition}\label{2.19}
Assume that $K$ is a bounded operator from $\cH$ to $\cH'$ with $\|K\|<1$,
$(\var_\ga)_{\ga\in\Ga}$ is an orthonormal basis for $\cH$, and 
$W=(w_\ga)_{\ga\in\Ga}$ is a sequence such that $\forall \ga\in\Ga w_\ga>c>0$. 
Let $\Ga=\Ga_1\cup\dots\cup \Ga_n, W_i=(w_\ga)_{\ga\in\Ga_i}$, 
$\bP=\{p_1,\dots,p_n\}$ such that  $1\leq p_i\leq 2$ for $1\leq i\leq n$. 
Suppose $g$ is an element of $\cH'$, $\al=(\al_1,\al_2,\dots,\al_n)$ such that 
$\al_i\geq 0$ for $1\leq i\leq n$, and that either there exists $j$ such 
that $p_j>1$ or $N(K)=\{0\}$. Define the functional $\Phi_{\al,W,\bP;g}$ on 
$\cH$ by 
$$\Phi_{\al,W,\bP;g}(f)=\|Kf-g\|^2+ \al_1|\!|\!|f|\!|\!|^{p_1}_{W_1,p_1}+ 
\al_2|\!|\!|f|\!|\!|^{p_2}_{W_2,p_2} +\dots+\al_n|\!|\!|f|\!|\!|^{p_n}_{W_n,p_n}
$$
Also suppose $f^*_{\al,W,\bP;g}$ is the minimizer of the functional 
$\Phi_{\al,W,\bP;g}, f_0\in\cH, S=N(K)+f_0=\{f|K(f)=K(f_0)\}$, 
and $f^\dagger$ is the unique minimal element  with regard to 
$|\!|\!|\cdot|\!|\!|^{\bP}_{W,\bP}$ in 
$S$. Let $\{\eps_t\}_{t=1}^\infty$ be a sequence of positive 
numbers convergent to 0
 and $\al(\eps_t)=(\al_1(\eps_t),\dots,\al_n(\eps_t))$ such that 
$\lim_{t\rig+\infty} \al_i(\eps_t)=0$, $\lim_{t\rig+\infty} 
\f{\eps_t^2}{\al_i(\eps_t)} =0$, $\lim_{t\rig+\infty} 
\f{\al_i(\eps_t)}{\al_j(\eps_t)}=1$ for every $i,j$; $1\leq i,j\leq n$.
Suppose $\{g_n\}_{n=1}^{+\infty} \subs\cH'$ is a sequence such that, for every 
$n$, $\|g_n-Kf_0\|<\eps_n$. 
Then we have 
$$\|f^*_{\al(\eps_n);g_n}- f^\dagger\| \lo 0\quad as \quad n\lo\infty.$$
\end{proposition}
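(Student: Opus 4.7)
The plan is to follow the classical variational regularization strategy, adapted to the mixed-penalty setting, and to close with Lemma 2.17 (strong convergence from weak convergence plus convergence of the penalty) together with the uniqueness of $f^\dagger$ (Lemma 2.18). Throughout, abbreviate $f^*_n := f^*_{\al(\eps_n),W,\bP;g_n}$ and write $\beta_i(n):=\al_i(\eps_n)/\al_1(\eps_n)$, noting that $\beta_i(n)\lo 1$ and $\eps_n^2/\al_1(\eps_n)\lo 0$.

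First, I would exploit minimality of $f^*_n$ by comparing $\Phi_{\al(\eps_n),W,\bP;g_n}$-values at $f^*_n$ and at $f^\dagger\in S$. Since $Kf^\dagger=Kf_0$ and $\|g_n-Kf_0\|<\eps_n$, this yields
$$\|Kf^*_n-g_n\|^2+\sum_{i}\al_i(\eps_n)\,|\!|\!|f^*_n|\!|\!|^{p_i}_{W_i,p_i}\;\leq\;\eps_n^2+\sum_{i}\al_i(\eps_n)\,|\!|\!|f^\dagger|\!|\!|^{p_i}_{W_i,p_i}.$$
Dividing the penalty part by $\al_1(\eps_n)$ and using the limit hypotheses on $\eps_n^2/\al_1(\eps_n)$ and on $\beta_i(n)$, I extract a uniform bound on each $|\!|\!|f^*_n|\!|\!|^{p_i}_{W_i,p_i}$, which via the estimate $\sum_{\ga\in\Ga_i}|(f^*_n)_\ga|^2\leq c^{-2/p_i}|\!|\!|f^*_n|\!|\!|^2_{W_i,p_i}$ (exactly as in Lemma 2.8(D)) yields a uniform bound on $\|f^*_n\|$.

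Next, reflexivity of $\cH$ and Lemma 2.12 supply a subsequence $f^*_{n_k}\ovs{weakly}{\lo}\til f$. The discrepancy part of the bound above forces $\|Kf^*_n-g_n\|\lo 0$, which together with $\|g_n-Kf_0\|<\eps_n$ gives $Kf^*_{n_k}\lo Kf_0$ in norm; since also $Kf^*_{n_k}\ovs{weakly}{\lo}K\til f$ (boundedness of $K$), we conclude $K\til f=Kf_0$, i.e.\ $\til f\in S$. Weak lower semicontinuity of each $|\!|\!|\cdot|\!|\!|^{p_i}_{W_i,p_i}$ (Fatou applied coordinate-wise in the orthonormal basis $\{\var_\ga\}$) and the fact that $\beta_i(n)\lo 1$ allow me to take $\limsup$ in the normalized penalty inequality and deduce
$$|\!|\!|\til f|\!|\!|^{\bP}_{W,\bP}\;\leq\;\liminf_{k}|\!|\!|f^*_{n_k}|\!|\!|^{\bP}_{W,\bP}\;\leq\;\limsup_{k}|\!|\!|f^*_{n_k}|\!|\!|^{\bP}_{W,\bP}\;\leq\;|\!|\!|f^\dagger|\!|\!|^{\bP}_{W,\bP}.$$
Since $\til f\in S$, Lemma 2.18 forces $\til f=f^\dagger$, and as a by-product the chain collapses to equalities, so $|\!|\!|f^*_{n_k}|\!|\!|^{\bP}_{W,\bP}\lo|\!|\!|f^\dagger|\!|\!|^{\bP}_{W,\bP}$.

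Finally, Lemma 2.17 applies to $f^*_{n_k}\ovs{weakly}{\lo}f^\dagger$ with convergence of the mixed penalty, giving $\|f^*_{n_k}-f^\dagger\|\lo 0$. A standard subsequence argument then upgrades this to convergence of the full sequence: every subsequence of $\{f^*_n\}$ admits, by the same reasoning, a further subsequence converging strongly to the unique limit $f^\dagger$. I expect the main technical obstacle to lie in the third paragraph: because the weights $\al_i(\eps_n)$ genuinely depend on $i$, one cannot read off a bound on $\sum_i|\!|\!|f^*_n|\!|\!|^{p_i}_{W_i,p_i}$ without normalizing by a single $\al_j(\eps_n)$, and one must then carefully commute $\liminf_k$ with the finite sum over $i$ when applying weak lower semicontinuity termwise; the hypothesis $\al_i(\eps_n)/\al_j(\eps_n)\lo 1$ is precisely what validates this step.
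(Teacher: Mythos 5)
Your proposal is correct, and its skeleton (uniform bound obtained by comparing $\Phi$-values at $f^*_n$ and at $f^\dagger$, weak subsequential limits via reflexivity and Lemma 2.12, identification of every such limit with $f^\dagger$ through the chain of inequalities and uniqueness from Lemma 2.18, then Lemma 2.17 and a subsequence argument to pass to the full sequence) matches the paper's steps (*), (**) and (***). The one place where you genuinely diverge is in proving that a weak subsequential limit $\til{f}$ lies in $S$. The paper routes this through the iterative machinery: it invokes Lemma 2.7 and Lemma 2.13 to write each $\til{f}_k$ as a fixed point of the thresholded Landweber map, $\til{f}_k=\bS_{\til{\al}_k,W,\bP}(\til{f}_k+K^*(\til{g}_k-K\til{f}_k))$, and then uses the shrinkage estimate $|S_{w,p}(x)-x|\leq\f{wp}{2}|x|^{p-1}$ of Lemma 2.16 to show coordinatewise that $[K^*K(f^\dagger-\til{f})]_\ga=0$ for every $\ga\in\Ga$, whence $f^\dagger-\til{f}\in N(K^*K)=N(K)$. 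You instead observe that the discrepancy term in the minimality inequality is $o(1)$, so $Kf^*_{n_k}\lo Kf_0$ in norm, while $Kf^*_{n_k}\ovs{weakly}{\lo}K\til{f}$ by weak continuity of the bounded operator $K$; uniqueness of weak limits then gives $K\til{f}=Kf_0$ directly. Your route is shorter and more elementary --- it needs neither the fixed-point characterization of the minimizer nor Lemma 2.16 --- whereas the paper's argument is the one inherited from [3] and ties the regularization statement to the iterative algorithm that is the paper's main object. Your final bookkeeping is also organized a little differently (the sub-subsequence principle applied to norm convergence, versus the paper's step (***), which first upgrades weak convergence and convergence of the penalties to the whole sequence and only then invokes Lemma 2.17 once); both versions are valid, and your handling of the $i$-dependent weights via the ratios $\al_i(\eps_n)/\al_1(\eps_n)\lo 1$ is exactly the normalization the paper performs in (7) and (8).
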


\begin{proof}
At first, we prove (*),(**),(***). \\
(*) \quad $\exists M>0;\quad \forall t\in\BN\quad \|f^*_{\al(\eps_t);g_t}\|<M$.

In the proof of part D of Lemma 2.8, we had 
\begin{align*}
\forall i; & 1\leq i\leq n \quad \sum_{\ga\in\Ga_i} |f_\ga|^2 \leq 
c^{-\f{2}{p_i}} |\!|\!|f|\!|\!|^2_{W_i,p_i} \\
\Lo\forall i; & 1\leq i\leq n \quad \left(\sum_{\ga\in\Ga_i}
 |f_\ga|^2\right)^{\f{p_i}{2}} \leq 
\f{1}{c} \left(\sum_{\ga\in\Ga_i} w_\ga |f_\ga|^{p_i}\right)
\end{align*}
Since, $\{\f{\eps_t^2}{\al_i(\eps_t)} \}_{t=1}^{+\infty}$, 
$\{\f{\al_i(\eps_t)}{\al_j(\eps_t)}\}_{t=1}^{+\infty}$ are convergent 
sequences, we have
$$\exists M''>0; \;\forall t \; \left| \f{\eps_t^2}{\al_i(\eps_t)}\right|<M'', 
\quad \left| \f{\al_i(\eps_t)}{\al_j(\eps_t)}\right| <M''$$
Consequently,
\begin{align*}
\left( \sum_{\ga\in\Ga_i} |(f^*_{\al(\eps_t);g_t})_\ga|^2\right)^{\f{p_i}{2}} 
&\leq \f{1}{c} |\!|\!|f^*_{\al(\eps_t);g_t}|\!|\!|^{p_i}_{W_i,p_i} \\
&\leq \f{1}{c\al_i(\eps_t)} \Phi_{\al(\eps_t);g_t} (f^*_{\al(\eps_t);g_t}) 
\leq \f{1}{c\al_i(\eps_t)} \Phi_{\al(\eps_t);g_t} (f^\dagger) \\
&=\f{1}{c\al_i(\eps_t)} \left(\|Kf_0-g_t\|^2+
 \sum_{j=1}^n \al_j(\eps_t)|\!|\!| 
f^\dagger |\!|\!|^{p_j}_{W_j,p_j}\right) \\
&\leq \f{1}{c} \left( \f{\eps_t^2}{\al_i(\eps_t)} + \sum_{j=1}^n 
\f{\al_j(\eps_t)}{\al_i(\eps_t)} |\!|\!|f^\dagger 
|\!|\!|^{p_j}_{W_j,p_j}\right) \\
&\leq \f{1}{c} \left( M''+M'' \sum_{j=1}^n |\!|\!|f^\dagger 
|\!|\!|^{p_j}_{W_j,p_j} 
\right):=M'\tag{$7$}
\end{align*}
Then
\begin{align*}
& \sum_{\ga\in\Ga_i} |(f^*_{\al(\eps_t);g_t})_\ga|^2 \leq (M')^{\f{2}{p_i}} \\
&\Lo \|f^*_{\al(\eps_t);g_t}\|^2 =\sum_{i=1}^n \sum_{\ga\in\Ga_i} 
|(f^*_{\al(\eps_t);g_t})_\ga|^2\leq \sum_{i=1}^n (M')^{\f{2}{p_i}} \\
&\Lo \exists M>0; \forall t\in\BN\quad \|f^*_{\al(\eps_t);g_t}\|<M.
\end{align*}
Furthermore, we have
\begin{align*}
|\!|\!|f^*_{\al(\eps_t);g_t} |\!|\!|^{\bP}_{W,\bP} 
&\leq \f{1}{\min \{\al_i(\eps_t)|1\leq i\leq n\}} \Phi_{\al(\eps_t);g_t} 
(f^*_{\al(\eps_t);g_t}) \\
&\leq \f{1}{\min\{\al_i(\eps_t) | 1\leq i\leq n\}} \Phi_{\al(\eps_t);g_t} 
(f^\dagger) \\
&=\f{1}{\min\{\al_i(\eps_t)| 1\leq i\leq n\}}  (\|Kf_0-g_t\|^2+ \sum_{j=1}^n 
\al_j(\eps_t)|\!|\!| f^\dagger|\!|\!|^{p_j}_{W_j,p_j}) \\
&\leq \f{\eps_t^2}{\min\{\al_i(\eps_t)| 1\leq i\leq n\}} +\sum_{j=1}^n 
\f{\al_j(\eps_t)}{\min\{\al_i(\eps_t)| 1\leq i\leq n\}} 
|\!|\!|f^\dagger|\!|\!|^{p_j}_{W_j,p_j}
\end{align*}
Then 
\begin{equation*}
\lim_{t\rig\infty} |\!|\!|f^*_{\al(\eps_t);g_t} |\!|\!|^{\bP}_{W,\bP} \leq 0+ 
\sum_{j=1}^n |\!|\!|f^\dagger |\!|\!|^{p_j}_{W_j,p_j}= |\!|\!|f^\dagger 
|\!|\!|^{\bP}_{W,\bP} 
\tag{$8$}
\end{equation*}
(**)\quad 
If $\{\til{f}\}_{k=1}^\infty$ is subsequence of 
$\{f^*_{\al(\eps_t);g_t}\}_{t=1}^{+\infty}$ and $\til{f}\in\cH$,
$\til{f}_k\ovs{weakly}{\lo} \til{f}$ and 
$\{|\!|\!|\til{f}_k|\!|\!|^{\bP}_{W,\bP}\}_{k\in\BN}$ are convergent, then we 
have 
$$|\!|\!|\til{f}_k|\!|\!|^{\bP}_{W,\bP} \lo 
|\!|\!|\til{f}|\!|\!|^{\bP}_{W,\bP}, 
f^\dagger=\til{f}$$

At first, we introduce the following new notations
$$\til{f}_k\in \{f^*_{\al(\eps_t);g_t}\}_{t\in\BN} \Lo \exists m; 
\til{f}_k=f^*_{\al(\eps_m);g_m}$$
Let $\til{\al}_k:=\al(\eps_m)=(\al_1(\eps_m),\dots,\al_n(\eps_m))$, 
$\til{g}_k:=g_m,\til{e}_k:=\til{g}_k-K(f_0),(\til{\al}_k)_\ga:=\al_i(\eps_m)$ 
for $\ga\in\Ga_i$. By these notations, we have
$$\|\til{e}_k\| <\til{\eps}_k, \til{f}_k \; \text{is the minimizer of the 
functional}\; \Phi_{\til{\al}_k;\til{g}_k}.$$

Since,  in the assumptions we have:
 either there exists $p_i>1$ or $N(K)=0$, it 
follows from 
Theorem 2.14 that $\til{f}_k$ is the unique minimizer of 
$\Phi_{\til{\al}_k,\til{g}_k}$. On the other hand, by Lemma 2.7, 2.13, $\til{f}_k$ 
is a fixed point for $\til{\bT}_k$, i.e. $\til{\bT}_k(\til{f}_k)=\til{f}_k$.
Consequently 
$$\til{f}_k=\til{\bT}_k(\til{f}_k) =\bS_{\til{\al}_k,W,\bP} 
(\til{f}_k+K^*(\til{g}_k-K(\til{f}_k)))$$
Let $\til{f}_\ga:=<\til{f},\var_\ga>$, 
$\til{h}_k:=\til{f}_k+K^*(\til{g}_k-K\til{f}_k)$.

With these new notations, we have 
\begin{align*}
(\til{f}_k)_\ga &=S_{(\til{\al}_k)_\ga w_\ga,p_\ga}((\til{h}_k)_\ga),\\
\til{h}_k:&=\til{f}_k+K^*(\til{g}_k-K(\til{f}_k)) 
=\til{f}_k+K^*K(f_0-\til{f}_k)+K^*(\til{e}_k) \tag{$9$}
\end{align*}
Moreover,  by (*), $2\|\til{f}_k\|+\|f_0\| \leq 2M+\|f_0\| :=M'''$ and since 
$\|K\|\leq 1$, we have 
\begin{align*}
\|\til{h}_k\| & \leq \|\til{f}_k\| +\|f_0-\til{f}_k\| +\|\til{e}_k\| \\
&\leq 2\|\til{f}_k\| +\|f_0\| +\til{\eps}_k\\
&\leq M'''+\til{\eps}_k
\end{align*}
Therefore, for every $\ga\in\Ga$, 
\begin{align*}
|S_{(\til{\al}_k)_{\ga w_\ga,p_\ga}} ((\til{h}_k)_\ga)-(\til{h}_k)_\ga| 
& \leq  p_\ga w_\ga(\til{\al}_k)_\ga \f{|(\til{h}_k)_\ga|^{p_\ga-1}}{2}\\
& \leq  p_\ga w_\ga(\til{\al}_k)_\ga \f{(M'''+\til{\eps}_k)^{p_\ga-1}}{2}
\end{align*}
and since $\til{\eps}_k\lo 0$, $(\til{\al}_k)_\ga\lo 0$ as $k\lo\infty$, then, 
\begin{equation*}
|S_{(\til{\al}_k)_\ga w_\ga,p_\ga} ((\til{h}_k)_\ga) -(\til{h}_k)_\ga|\lo 0
\; as\; k\lo\infty \tag{$10$}
\end{equation*}
 Since $\til{f}_k\ovs{w}{\lo} \til{f}$, we have, for every $\ga\in\Ga$, 
$(\til{f}_k)_\ga\lo (\til{f})_\ga$ as $k\lo\infty$. This implies that, when 
$k\lo\infty$
$$|[K^*K(f_0-\til{f}_k)]_\ga -[K^*K(f^\dagger -\til{f})]_\ga| \lo 
[K^*K(f_0-f^\dagger)]_\ga=0$$
and hence 
\begin{equation*}
[K^*K(f_0-\til{f}_k)]_\ga\lo [K^*K(f^\dagger-\til{f})]_\ga \tag{$11$}
\end{equation*}
On the other hand, by the following inequality
$$(K^*\til{e}_k)_\ga \leq \|K^*\til{e}_k\| \leq \|\til{e}_k\|<\til{\eps}_k$$
we have
\begin{equation*}
(K^*\til{e}_k)_\ga\lo 0 \quad as \quad k\lo\infty \tag{$12$}
\end{equation*}
By (9), (10), (11) and (12), we have
\begin{align*}
\til{f}_\ga &=\lim_{k\rig\infty} (\til{f}_k)_\ga =\lim_{k\rig\infty} 
S_{(\til{\al}_k)_\ga w_\ga,p_\ga} [(\til{h}_k)_\ga] \\
&=\lim_{k\rig\infty} S_{(\til{\al}_k)_\ga w_\ga,p_\ga}[(\til{h}_k)_\ga]-
(\til{h}_k)_\ga+ \lim_{k\rig\infty}(\til{h}_k)_\ga\\
&=\lim_{k\rig\infty} (\til{h}_k)_\ga\\
&=\lim_{k\rig\infty} (\til{f}_k)_\ga 
+[K^*K(f_0-\til{f}_k)]_\ga+(K^*\til{e}_k)_\ga\\
&=\til{f}_\ga+[K^*K(f^\dagger-\til{f})]_\ga
\end{align*}
Consequently 
\begin{align*}
& \forall \ga\in\Ga; \quad 
\til{f}_\ga=\til{f}_\ga+[K^*K(f^\dagger-\til{f})]_\ga\\
\Lo & \forall \ga\in\Ga;\quad [K^*K(f^\dagger-\til{f})]_\ga=0\\
\Lo & K^*K(f^\dagger-\til{f})=0\\
\Lo & f^\dagger-\til{f}\in N(K)\\
\Lo & K(\til{f})=K(f^\dagger) \\
\Lo & \til{f}\in S\\
\Lo & |\!|\!| f^\dagger |\!|\!|^{\bP}_{W,\bP} \leq 
|\!|\!|\til{f}|\!|\!|^{\bP}_{W,\bP} 
\tag{$13$}
\end{align*}
 By Fatou's lemma and (8),(13), we have
\begin{align*}
|\!|\!|\til{f}|\!|\!|^{\bP}_{W,\bP} &=\sum_{\ga\in\Ga} w_\ga 
|\til{f}_\ga|^{p_\ga} 
\\
&=\sum_{\ga\in\Ga} w_\ga (\lim_{k\rig\infty} |(\til{f}_k)_\ga|^{p_\ga}) \\
&\leq \lim_{k\rig\infty} \sup \sum_{\ga\in\Ga} w_\ga |(\til{f}_k)_\ga|^{p_\ga} 
\\
&=\lim_{k\rig\infty} |\!|\!|\til{f}_k|\!|\!|^{\bP}_{W,\bP} \leq 
|\!|\!|f^\dagger|\!|\!|^{\bP}_{W,\bP} \\
&\leq |\!|\!|\til{f}|\!|\!|^{\bP}_{W,\bP}
\end{align*}
This implies that 
$$\lim_{k\rig\infty} |\!|\!|\til{f}_k|\!|\!|^{\bP}_{W,\bP} 
=|\!|\!|f^\dagger|\!|\!|^{\bP}_{W,\bP} =|\!|\!|\til{f}|\!|\!|^{\bP}_{W,\bP}$$
Since $f^\dagger$ is the unique minimal element with regard to 
$|\!|\!|\cdot|\!|\!|^{\bP}_{W,\bP}$ in $S$, it follows that 
$\til{f}=f^\dagger$. \\[0.2cm]
(***)\quad Let $\cH$ be a Hilbert space and $\{x_n\}_{n\in\BN}$ be a bounded 
sequence in $\cH$ such that $\{|\!|\!|x_n|\!|\!|^{\bP}_{W,\bP}\}_{n\in\BN}$ is 
bounded, too. Pick $x_0\in\cH$. Also suppose we know $x_{n_k}\ovs{w}{\lo} x_0$ 
is true  for every 
$\{x_{n_k}\}_{k\in\BN}$, a weakly  convergent subsequence of 
$\{x_n\}_{n\in\BN}$,  where 
 $\{|\!|\!|x_{n_k}|\!|\!|^{\bP}_{W,\bP}\}_{k\in\BN}$ is 
convergent. Then
$$x_n\ovs{w}{\lo} x_0.$$

First, note that for $\{a_n\}_{n=1}^{+\infty}\subs\BR$, we have $a_n\lo a$ 
if and only if 
$$\forall \{a_{n_k}\}_{k=1}^{+\infty} \subs\{a_n\}_{n=1}^{+\infty} \exists 
\{a_{n_{k_m}}\}_{m=1}^{+\infty} \subs \{a_{n_k}\}^{+\infty}_{k=1}; 
a_{n_{k_m}}\lo a_0$$
Also, we have
$$\forall f\in X^*; f(x_n)\lo  f(x_0)\LO x_n\ovs{weakly}{\lo} x_0$$
Pick arbitrary $f\in X^*$ and suppose $\{f(x_{n_k})\}_{k=1}^{+\infty}$ is a 
subsequence of $\{f(x_n)\}_{n=1}^{+\infty}$. Since 
$\{|\!|\!| x_{n_k}|\!|\!|^{\bP}_{W,\bP}\}_{k=1}^{+\infty}$,  and 
$\{x_{n_k}\}_{k=1}^{+\infty}$ are bounded, and also  by  Lemma 2.12,
 we conclude that 
there is $\{x_{n_{k_m}}\}_{m=1}^{+\infty}$, a subsequence of 
$\{x_{n_k}\}_{k=1}^{+\infty}$, such that $\{x_{n_{k_m}}\}_{m=1}^{+\infty}$ is 
weakly convergent and $\{|\!|\!|x_{n_{k_m}}|\!|\!|^{\bP}_{W,\bP}\}^{+\infty}_{m=
1}$ is convergent.

By the assumptions,
 $\{x_{n_{k_m}}\}_{m=1}^{+\infty}$ converges weakly to $x_0$, 
i.e. 
$$x_{n_{k_m}} \ovs{w}{\lo} x_0$$
Hence,
$$f(x_{n_{k_m}}) \lo f(x_0) \quad as \quad m\lo\infty$$
Then, we conclude
$$f(x_n)\lo f(x_0)$$
Consequently,
$$x_n\ovs{weakly}{\lo} x_0.$$

We have the following  inequality in (7):
$$\forall n\in\BN \quad |\!|\!|f^*_{\mu_n,g_n}|\!|\!|^{p_i}_{W_i,p_i} \leq 
M''+M'' 
\sum_{j=1}^n |\!|\!|f^\dagger|\!|\!|^{p_j}_{W_j,p_j}$$
This means $\{|\!|\!|f^*_{\mu_n,g_n}|\!|\!|^{\bP}_{W,\bP}\}_{n=1}^{+\infty}$
is bounded; this fact together with (*), (**)  provide the 
 assumptions of (***), for 
$\{f^*_{\mu_n,g_n}\}_{n=1}^{+\infty}$ and $f^\dagger\in\cH$, then by (***) 
we conclude:
\begin{equation*}
f^*_{\mu_n,g_n}\ovs{w}{\lo} f^\dagger \tag{$14$}
\end{equation*}
Suppose $\{|\!|\!|\til{f}_k|\!|\!|^{\bP}_{W,\bP}\}_{n=1}^{+\infty}$
is an arbitrary subsequence of 
$\{|\!|\!|f^*_{\mu_n,g_n}|\!|\!|^{\bP}_{W,\bP}\}_{n=1}^{+\infty}$. Since
 $\{|\!|\!|\til{f}_k|\!|\!|^{\bP}_{W,\bP}\}_{n=1}^{+\infty}$ is bounded, there 
is a convergent subsequence
 $\{|\!|\!|\til{f}_{k_m}|\!|\!|^{\bP}_{W,\bP}\}_{m=1}^{+\infty}$
of it, and then by (14), we have
$$\til{f}_{k_m}\ovs{w}{\lo} f^\dagger$$
So, by (**); we conclude:
$$|\!|\!|\til{f}_{k_m}|\!|\!|^{\bP}_{W,\bP} \lo 
|\!|\!|f^\dagger|\!|\!|^{\bP}_{W,\bP} \quad as \quad m\lo+\infty$$
And since
 $\{|\!|\!|\til{f}_k|\!|\!|^{\bP}_{W,\bP}\}_{k=1}^{+\infty}$ is an arbitrary 
subsequence 
of $|\!|\!|f^*_{\mu_n,g_n}|\!|\!|^{\bP}_{W,\bP}$, we conclude:
\begin{equation*}
|\!|\!|f^*_{\mu_n,g_n}|\!|\!|^{\bP}_{W,\bP} \lo 
|\!|\!|f^\dagger|\!|\!|^{\bP}_{W,\bP} \quad as 
\quad n\lo\infty\tag{$15$}
\end{equation*}
Finally, by (14), (15) and lemma 2.17,
$$\|f^*_{\mu_n,g_n}-f^\dagger\|\lo 0 \quad as\quad n\lo\infty.$$
\end{proof}

The following regularization theorem is our major goal in this subsection.

\begin{theorem}\label{2.20}
 Assume that $K$ is a bounded operator from $\cH$ to $\cH'$ 
with $\|K\|<1$,  $\{\var_\ga\}_{\ga\in\Ga}$  is an orthonormal basis for 
$\cH$, and $W=(w_\ga)_{\ga\in\Ga}$ is a sequence such that $\forall \ga\in\Ga$ 
$w_\ga>c>0$. Let $\Ga=\Ga_1\cup\Ga_2\cup\dots\cup \Ga_n$, 
$W_i=(w_\ga)_{\ga\in\Ga_i}$, $\bP=\{p_1,p_2,\dots,p_n\}$ such that $1\leq 
p_i\leq 2$ for $1\leq i\leq n$. Suppose that $g$ is an element of $\cH'$, 
$\al=(\al_1,\al_2,\dots,\al_n)$ such that $\al_i\geq 0$ for $1\leq i\leq n$, 
and that either there exists $j$ such that $p_j>1$ or $N(K)=\{0\}$. Define 
the functional $\Phi_{\al,W,\bP;g}$ on $\cH$ by 
$\Phi_{\al,W,\bP;g}(f)=\|Kf-g\|^2+\al_1|\!|\!|f|\!|\!|^{p_1}_{W_1,p_1}+\al_2 
|\!|\!|f|\!|\!|^{p_2}_{W_2,p_2}+\dots+\al_n |\!|\!|f|\!|\!|^{p_n}_{W_n,p_n}$.
Also assume
 $f^*_{\al,W,\bP;g}$ is the minimizer of the functional $\Phi_{\al,W,\bP;g}$.

Let $\al(\eps)=(\al_1(\eps),\al_2(\eps),\dots,\al_n(\eps))$ such that 
$\lim_{\eps\lo 0}\al_i(\eps)= 0$, $\lim_{\eps\rig 0}\f{\eps^2}{\al_i(\eps)} 
=0$, $\lim_{\eps\rig 0}\f{\al_i(\eps)}{\al_j(\eps)}=1$ for every $i,j; 1\leq 
i,j\leq n$. Then we have, for any $f_0\in\cH$,
$$\lim_{\eps\rig 0} \left[ \sup_{\|g-Kf_0\|<\eps} 
\|f^*_{\al(\eps),W,\bP;g}-f^\dagger\|\right]=0$$
where $f^\dagger$ is the unique minimal element with regard to 
$|\!|\!|\cdot|\!|\!|^{\bP}_{W,\bP}$ in $S=N(K)+f_0=\{f;K(f)=K(f_0)\}$ as 
stated  in Lemma 2.18.
\end{theorem}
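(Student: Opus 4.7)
The plan is to deduce Theorem 2.20 from Proposition 2.19 by a standard sequential-to-uniform contradiction argument, since all the analytic work has already been carried out for arbitrary admissible sequences $\{\eps_t\}$ and $\{g_t\}$ in that proposition.

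First, I would assume for contradiction that the conclusion fails. Then there exists $\delta_0 > 0$ and a sequence $\eps_n \to 0^+$ such that
$$\sup_{\|g - Kf_0\| < \eps_n} \|f^*_{\al(\eps_n),W,\bP;g} - f^\dagger\| \geq \delta_0$$
for every $n \in \BN$. Using the definition of supremum, for each $n$ I would choose an element $g_n \in \cH'$ with $\|g_n - Kf_0\| < \eps_n$ such that
$$\|f^*_{\al(\eps_n),W,\bP;g_n} - f^\dagger\| \geq \tfrac{\delta_0}{2}.$$

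Next, I would verify that the sequences $\{\eps_n\}$ and $\{g_n\}$ satisfy every hypothesis of Proposition 2.19. Indeed, $\eps_n \to 0$ by construction, and the functions $\al_i(\cdot)$ satisfy $\al_i(\eps) \to 0$, $\eps^2/\al_i(\eps) \to 0$, and $\al_i(\eps)/\al_j(\eps) \to 1$ as $\eps \to 0^+$ by assumption in Theorem 2.20, so the induced sequences $\al_i(\eps_n)$ satisfy the corresponding limiting conditions of Proposition 2.19. The sequence $\{g_n\}$ satisfies $\|g_n - Kf_0\| < \eps_n$, and the remaining hypotheses on $K$, $W$, $\bP$, and either $p_j > 1$ or $N(K) = \{0\}$ are shared verbatim between the two statements. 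Proposition 2.19 then yields $\|f^*_{\al(\eps_n),W,\bP;g_n} - f^\dagger\| \to 0$ as $n \to \infty$, which directly contradicts the lower bound $\delta_0/2$ maintained along the subsequence.

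There is essentially no serious obstacle in this step: the whole substance of the regularization result, in particular the bookkeeping of the three limiting conditions on $\al_i(\eps)/\al_j(\eps)$ and $\eps^2/\al_i(\eps)$, the weak compactness argument via Lemma 2.12, the identification of the weak limit with $f^\dagger$ in part $(**)$ of Proposition 2.19, and the norm convergence upgrade via Lemma 2.17, has already been absorbed into Proposition 2.19. The only thing to be careful about is that the conditions on $\al(\eps)$ in Theorem 2.20 are stated as limits along the continuous parameter $\eps \to 0$, so one must observe that these translate to the sequential hypotheses of Proposition 2.19 along any sequence $\eps_n \to 0^+$, which is immediate from the definition of a limit. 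With this contradiction in hand, the theorem is proved.
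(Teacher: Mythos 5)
Your proposal is correct and follows essentially the same route as the paper: negate the limit, extract a sequence $\eps_n \to 0$ along which the supremum stays bounded away from zero, select near-supremal witnesses $g_n$, and invoke Proposition 2.19 to reach a contradiction. Your handling of the supremum (choosing $g_n$ attaining at least $\delta_0/2$) is in fact slightly more careful than the paper's, which asserts witnesses attaining $\delta_0$ itself.
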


\begin{proof}
Let $H(\eps):=\sup\{\|f^*_{\al(\eps),W,\bP;g}-f^\dagger\| |g\in\cH', 
\|g-Kf_0\|<\eps\}.$

We should establish $\lim_{\eps\rig 0} H(\eps)=0$. If $\lim_{\eps\rig 0} 
H(\eps)\neq 0$,
 then there is a sequence $\{\eps_n\}_{n=1}^{+\infty}$ such that 
$\eps_n\lo 0$ and $\{H(\eps_n)\}_{n=1}^{+\infty}$ is not convergent to 0. 
consequently,
\begin{align*}
&\exists
 \del_0>0; \exists \{H(\eps_{n_k})\}_{k=1}^{+\infty}; \forall k\in\BN\; 
H(\eps_{n_k})\geq \del_0\\
\Lo&\exists
 \del_0>0; \exists \{H(\eps_{n_k})\}_{n=1}^{+\infty}; \forall k\in\BN\; 
\exists g_{n_k}\in\cH'; \|g_{n_k}-Kf_0\|\leq 
\eps_{n_{k}}, \|f^*_{\al(\eps_{n_k});g_{n_k}}-f^\dagger\|\geq \del_0.
\end{align*}
This is a contradiction, because
by  Lemma 2.19 for $\{\eps_{n_k}\}_{k=1}^{+\infty}$, 
$\{g_{n_k}\}_{k=1}^{+\infty}$, we will have
$$\|f^*_{\al(\eps_{n_k});g_{n_k}} -f^\dagger\|\lo 0.$$
\end{proof}

\section{Outcomes in solving many minimization problems}
In this section we present, in three subsections, some applications of the 
generalization provided in section 2. 
\subsection{Multi-frame representations in linear inverse problems with mixed 
multi-constraints}
In this subsection, we prove theorems, from G. Teschke [9], as results 
from theorems in the previous
 section. Furthermore, we present a regularization 
theorem and prove it. We begin with the following simple lemma.

\begin{lemma}\label{3.1}
A) Suppose $\cH_1,\cH_2,\dots,\cH_n$ are Hilbert space with inner products 
$<.,.>_{\cH_i}$ for $\cH_i$. Then $\cH_1\ti\cH_2\ti\dots\ti \cH_n$ is a 
Hilbert space  with 
the inner product 
$\langle f,g\rangle_{\cH}=\langle f_1,h_1\rangle_{\cH_1}+ \dots+\langle 
f_n,h_n\rangle_{\cH_n}$ for $f=(f_1,\dots,f_n)$, $g=(g_1,\dots,g_n)$ such that 
$f_i,g_i\in\cH_i$, and with componential addition and componential scalar 
multiplication.

B) If $\{a_\la^i\}_{\la\in\La_i}$ is an orthonormal basis for $\cH_i$, then 
$\sA=\{A_\la^j| A_\la^j=(0,\dots,a_\la^j,0,\dots,0);\la\in\La_j, 1\leq j\leq 
n\}$ is an orthonormal basis for $\cH_1\ti\cH_2\ti\dots\ti \cH_n$.
\end{lemma}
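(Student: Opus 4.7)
The plan is to verify Part (A) by checking the Hilbert space axioms componentwise, and then to deduce Part (B) by reducing everything to the orthonormal expansions already available in each factor $\cH_i$.

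For (A), I would first note that the proposed pairing is sesquilinear and conjugate symmetric because each summand $\langle f_i, g_i\rangle_{\cH_i}$ is, and that it is positive definite since $\langle f, f\rangle_{\cH} = \sum_{i=1}^{n} \|f_i\|_{\cH_i}^{2}$ vanishes iff every $f_i = 0$. The only non-routine point is completeness. Given a Cauchy sequence $(f^{(k)})_{k\in\BN}$ in $\cH_1\ti\dots\ti\cH_n$, the estimate $\|f^{(k)}_i - f^{(\ell)}_i\|_{\cH_i} \leq \|f^{(k)} - f^{(\ell)}\|_{\cH}$ forces each coordinate sequence to be Cauchy in its own Hilbert space, hence to converge to some $f_i \in \cH_i$. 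Setting $f = (f_1,\dots,f_n)$, the identity $\|f^{(k)} - f\|_{\cH}^{2} = \sum_{i=1}^{n}\|f^{(k)}_i - f_i\|_{\cH_i}^{2}$ (a finite sum, so the limit passes inside) yields $f^{(k)}\lo f$ in $\cH$.

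For (B), orthonormality is immediate: by the definition of the product inner product, $\langle A^j_\la, A^k_\mu\rangle_{\cH}$ vanishes when $j\neq k$ because all the nonzero slots sit in disjoint positions, while for $j=k$ it reduces to $\langle a^j_\la, a^j_\mu\rangle_{\cH_j}$, which is $\del_{\la\mu}$ by the orthonormality of $\{a^j_\la\}_{\la\in\La_j}$. For completeness of the family, given $f=(f_1,\dots,f_n)\in\cH$, I would apply the Parseval expansion in each factor to write $f_j=\sum_{\la\in\La_j}\langle f_j, a^j_\la\rangle_{\cH_j}\, a^j_\la$, and then observe that $\langle f, A^j_\la\rangle_{\cH}=\langle f_j, a^j_\la\rangle_{\cH_j}$, so that $f=\sum_{j=1}^{n}\sum_{\la\in\La_j}\langle f, A^j_\la\rangle_{\cH}\, A^j_\la$ in the $\cH$-norm; combined with the Parseval identity $\|f\|_{\cH}^{2}=\sum_{j=1}^{n}\|f_j\|_{\cH_j}^{2} = \sum_{j=1}^{n}\sum_{\la\in\La_j}|\langle f, A^j_\la\rangle_{\cH}|^{2}$, this gives both the expansion and the Parseval relation characterising an orthonormal basis.

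There is no genuine obstacle: the whole statement is the standard $\ell^2$-direct sum construction specialised to finitely many summands. The only care needed is in handling the product-norm convergence in (A) and in swapping the outer finite sum over $j$ with the inner (generally infinite) sum over $\la\in\La_j$ in (B); the first is harmless because the sum is finite, and the second is harmless because each inner series converges in $\cH_j$ and is then embedded into the $j$-th slot of $\cH$.
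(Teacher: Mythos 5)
Your proof is correct and complete. The paper states Lemma 3.1 without any proof (it is introduced only as a ``simple lemma''), and your argument is precisely the standard finite $\ell^2$-direct-sum construction the author is implicitly invoking: the coordinatewise Cauchy/completeness argument in (A) and the disjoint-slot orthonormality plus factorwise Parseval expansion in (B) are both sound and fill in exactly the details the paper omits.
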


\begin{remark}\label{3.2}
Suppose $\cH=\cH_1\ti\cH_2\ti\dots\ti\cH_n$ is a Hilbert space as 
identified in Lemma \ref{3.1} and $K:\cH\lo \cH'$ is a bounded linear operator.

We can have all the
 results in section 2 for $K$, $\cH$. By assuming $\Ga_i=\La_i$ 
and $\sA=\sA_1\cup\dots\cup \sA_n=\{A^1_\la\}_{\la\in\La_1} \cup\dots\cup 
\{A^n_\la\}_{\la\in\La_n}$ for $f=(f^1,\dots,f^n)\in\cH$, we have
\begin{align*}
|\!|\!|f|\!|\!|^{\bP}_{W,\bP} &=|\!|\!|f|\!|\!|^{p_1}_{W_1,p_1}
+\dots+|\!|\!|f|\!|\!|^{p_n}_{W_n,p_n}\\
&=\sum_{\la\in\La_1} w_\la| <(f,A_\la^1)>_{\cH} 
|^{p_1}+\dots+\sum_{\la\in\La_n} w_\la|<f,A^n_\la)>_{\cH}|^{p_n} \\
&=\sum_{\la\in\La_1} w_\la |<(f^1,\dots,f^n), 
(a^1_\la,0,\dots,0)>_{\cH}|^{p_1}+\dots\\
&\;\; + \sum_{\la\in\La_n} w_\la| 
<(f^1,\dots,f^n),(0,\dots,0,a^n_\la)>_{\cH}|^{p_n}\\
&=\sum_{\la\in\La_1}w_\la|<f^1,a^1_\la>_{\cH_1}|^{p_1}+ 
\dots+\sum_{\la\in\La_n} w_\la|<f^n,a^n_\la>_{\cH_n}|^{p_n}
\end{align*}
Also for $h=(h^1,\dots,h^n)$
\begin{align*}
\bS_{W,\bP}(h) &=\sum_{i=1}^n (\sum_{\la\in\La_i} 
S_{w_\la,p_\la}(<h,A^i_\la>_{\cH})A^i_\la) \\
&=\sum_{i=1}^n (\sum_{\la\in\La_i} 
S_{w_\la,p_i}(<h^i,a_\la^i>_{\cH_i})A_\la^i)\\
&=\sum_{i=1}^n (\sum_{\la\in\La_i} (0,\dots,S_{w_\la,p_i} 
(<h^i,a^i_\la>_{\cH_i})a^i_{\la},\dots,0))\\
&=\sum_{i=1}^n (0,\dots,\sum_{\la\in\La_i} 
S_{w_\la,p_i}(<h^i,a_\la^i>_{\cH_i})a_\la^i,\dots,0) \\ 
 &=(\sum_{\la\in\La_1} S_{w_\la,p_1} 
(<h^1,a^1_\la>_{\cH_1})a^1_\la,\dots,\sum_{\la\in\La_n} 
S_{w_\la,p_n}(<h^n,a_\la^n>_{\cH_n})a^n_\la).
\end{align*}
\end{remark}

\begin{lemma}\label{3.3}
A) If $\La$ is a countable set, then the following set is a Hilbert space
$$l_2=\{\{a_\la\}_{\la\in\La} | a_\la\in \BC, 
\sum_{\la\in\La}|a_\la|^2<\infty\}$$
considering $<a,b>=\sum_{\la\in\La}a_\la \bar{b}_\la$ for $a,b\in l_2$ 
and $a=\{a_\la\}_{\la\in\La}$, $b=\{b_\la\}_{\la\in\La}$. 

B) $E=\{e_\ga|e_\ga=\{a_\la\}_{\la\in\La}; a_\ga=1\; \text{for}\; \la=\ga, 
\forall\la\neq \ga\; a_\la=0\}$ is an orthonormal basis for $l_2$.
\end{lemma}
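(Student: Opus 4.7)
The plan is to recognize Lemma 3.3 as the standard construction of the sequence space $\ell_2(\La)$ and its canonical basis, so I would execute the classical argument in two blocks corresponding to parts A and B.

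For part A, I would first verify that $\ell_2$ is a complex vector space. Closure under scalar multiplication is immediate, and closure under addition follows from the pointwise inequality $|a_\la+b_\la|^2\leq 2(|a_\la|^2+|b_\la|^2)$, which gives summability of $|a_\la+b_\la|^2$. Next, I would show the formula $\langle a,b\rangle=\sum_{\la\in\La}a_\la\bar b_\la$ is well-defined by applying the Cauchy--Schwarz inequality on finite truncations: for any finite $F\subs\La$,
$$\sum_{\la\in F}|a_\la\bar b_\la|\leq \Bigl(\sum_{\la\in F}|a_\la|^2\Bigr)^{1/2}\Bigl(\sum_{\la\in F}|b_\la|^2\Bigr)^{1/2}\leq \|a\|_2\|b\|_2,$$
so the series converges absolutely. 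Conjugate symmetry, sesquilinearity and positive definiteness are then pointwise-to-sum checks. The nontrivial step is completeness: given a Cauchy sequence $\{a^{(n)}\}\subs\ell_2$, for every fixed $\la$ the numerical sequence $\{a_\la^{(n)}\}$ is Cauchy in $\BC$, so it converges to some $a_\la$; I then show that the candidate limit $a=\{a_\la\}_{\la\in\La}$ lies in $\ell_2$ and that $\|a^{(n)}-a\|_2\to 0$ by a standard argument using Fatou-type control on finite partial sums ($\sum_{\la\in F}|a_\la^{(n)}-a_\la^{(m)}|^2<\eps$, let $m\to\infty$, then let $F\uparrow\La$).

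For part B, orthonormality is immediate: $\langle e_\ga,e_{\ga'}\rangle=\delta_{\ga,\ga'}$ from the definition. For totality (spanning), I would pick arbitrary $a\in\ell_2$ and an enumeration $\la_1,\la_2,\dots$ of $\La$, and compute
$$\Bigl\|a-\sum_{k=1}^N a_{\la_k}e_{\la_k}\Bigr\|^2=\sum_{k>N}|a_{\la_k}|^2\lo 0\quad\text{as }N\lo\infty,$$
since the full series $\sum_\la|a_\la|^2$ converges. This shows $\overline{\mathrm{span}}\,E=\ell_2$, so $E$ is an orthonormal basis.

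The only genuine obstacle is the completeness argument in part A, because it is the one step where one must pass from pointwise limits to a norm limit while simultaneously verifying membership in $\ell_2$; the usual truncate–then–pass-to-limit trick handles it, and the rest of the lemma is bookkeeping.
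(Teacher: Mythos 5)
Your argument is the standard textbook construction of $\ell_2(\La)$ and its canonical orthonormal basis, and it is correct in every step, including the completeness argument via pointwise limits and the truncate--then--pass-to-the-limit estimate. The paper itself supplies no proof of Lemma 3.3 (it is stated as a classical fact), so your write-up simply fills in the standard details consistently with what the paper takes for granted.
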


The following lemma is a simple result of Lemma \ref{3.1}, \ref{3.3}.

\begin{lemma}\label{3.4}
Suppose $\La_1,\La_2,\dots,\La_n$ are countable sets and $l_2^i$ for 
$i=1,\dots,n$ are Hilbert spaces as follows 
$$l_2^i=\{\{a_\la\}_{\la\in\La_i} | a_\la\in\BC, \sum_{\la\in\La_i} 
|a_\la|^2<\infty\}$$
Then, we have

A) $(l_2)^n:=l_2^1\ti\dots\ti l_2^n$ with the following inner product 
$$\langle f,h\rangle=\langle f_1,h_1\rangle_{l_2^1}+\dots+\langle 
f_n,h_n\rangle_{l_2^n}$$
is a Hilbert space.

B) If $\sA=\{E_\ga^j| E_\ga^j=(0,\dots,e^j_\ga,\dots,0), 1\leq j\leq n, 
\ga\in\La_j\}$ such that $e_\ga^j=\{a_\la\}_{\la\in\La_j}$, $a_\la=0$ for 
$\la\neq\ga$, $a_\la=1$ for $\la=\ga$, then $\sA$ is an orthonormal basis for 
$(l_2)^n$.
\end{lemma}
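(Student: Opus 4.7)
The plan is to treat this as a direct two-step specialization of the earlier lemmas, since the statement itself advertises that it is a simple corollary of Lemma~\ref{3.1} and Lemma~\ref{3.3}. The only real task is to match the abstract Hilbert spaces $\cH_i$ of Lemma~\ref{3.1} with the concrete sequence spaces $l_2^i$ of Lemma~\ref{3.3}, and to verify that the two notations for the canonical basis vectors line up.

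For part (A), I would first invoke Lemma~\ref{3.3}(A) on each countable index set $\La_i$ separately, which gives that every $l_2^i$ is a Hilbert space with inner product $\langle a,b\rangle_{l_2^i}=\sum_{\la\in\La_i}a_\la\overline{b_\la}$. Setting $\cH_i:=l_2^i$, Lemma~\ref{3.1}(A) applied to $\cH_1,\dots,\cH_n$ then shows that the Cartesian product $l_2^1\ti\dots\ti l_2^n$, equipped with componentwise addition, componentwise scalar multiplication, and the inner product $\langle f,h\rangle=\sum_{i=1}^n\langle f_i,h_i\rangle_{l_2^i}$, is itself a Hilbert space. This is exactly the assertion of (A).

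For part (B), I would apply Lemma~\ref{3.3}(B) to obtain, for each $i$, the orthonormal basis $\{e^i_\ga\}_{\ga\in\La_i}$ of $l_2^i$, where $e^i_\ga$ is the sequence in $l_2^i$ whose $\ga$-th coordinate is $1$ and all other coordinates vanish. Feeding these bases into Lemma~\ref{3.1}(B) with $a^i_\la:=e^i_\la$, the conclusion is that the set
\[
\sA=\bigl\{A^j_\ga=(0,\dots,0,e^j_\ga,0,\dots,0):1\leq j\leq n,\ \ga\in\La_j\bigr\},
\]
with $e^j_\ga$ in the $j$-th slot, is an orthonormal basis of $l_2^1\ti\dots\ti l_2^n$. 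Relabeling $A^j_\ga$ as $E^j_\ga$ recovers precisely the statement in (B).

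There is essentially no obstacle: since Lemma~\ref{3.1} is stated for arbitrary Hilbert spaces $\cH_i$ and arbitrary orthonormal bases, the only thing being done here is the substitution $\cH_i\leftarrow l_2^i$ and $a^i_\la\leftarrow e^i_\la$. The proof therefore consists of little more than citing Lemma~\ref{3.3} to set up the hypotheses of Lemma~\ref{3.1}, and then quoting both parts of Lemma~\ref{3.1} verbatim. No convergence or completeness argument needs to be redone, because completeness of each factor has already been granted by Lemma~\ref{3.3}(A) and completeness of the finite product then follows from Lemma~\ref{3.1}(A).
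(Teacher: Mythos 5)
Your proposal is correct and matches the paper's intent exactly: the paper offers no written proof, merely noting that the lemma ``is a simple result of Lemma 3.1, 3.3,'' and your two-step specialization (Lemma 3.3 to make each $l_2^i$ a Hilbert space with canonical orthonormal basis, then Lemma 3.1 for the finite product and its basis) is precisely that argument spelled out.
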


\begin{lemma}\label{3.5}
Suppose $\cH,\cH'$ are Hilbert spaces, 
$\{\phi_\la^1\}_{\la\in\La_1},\dots,\{\phi_\la^n\}_{\la\in\La_n}$ are frame
in $\cH$, $F_i:\cH\lo l_2^i$ are frame operators,
$F^*_i:l_2^i\lo\cH$ are adjoint operators
 of $F_i$, and also $A:\cH\lo \cH'$ is 
a bounded linear operator. Defined the function
 $K_A:(l_2)^n\lo \cH'$ by
$$K(v^1,\dots,v^n)=\sum_{i=1}^n A(F^*_i(v^i)).$$
Then $K_A$ is a bounded linear operator.
\end{lemma}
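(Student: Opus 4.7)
The plan is to split the proof into the two required properties: linearity and boundedness, both of which should follow almost immediately from the corresponding properties of the constituent operators $A$ and $F_i^*$.

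First I would verify linearity. Given $(v^1,\dots,v^n),(w^1,\dots,w^n)\in(l_2)^n$ and scalars $\alpha,\beta\in\BC$, I would use the componentwise vector-space structure on $(l_2)^n$ from Lemma 3.4 to write
$$K_A\bigl(\alpha(v^1,\dots,v^n)+\beta(w^1,\dots,w^n)\bigr)=\sum_{i=1}^{n}A\bigl(F_i^*(\alpha v^i+\beta w^i)\bigr),$$
and then invoke linearity of each $F_i^*$ (the adjoint of a bounded linear operator is linear) and of $A$ to pull $\alpha$ and $\beta$ outside. The two resulting sums are exactly $\alpha K_A(v^1,\dots,v^n)+\beta K_A(w^1,\dots,w^n)$.

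Next I would establish boundedness. Since each $\{\phi_\lambda^i\}_{\lambda\in\La_i}$ is a frame in $\cH$, the frame operator $F_i\colon\cH\to l_2^i$ is bounded, hence so is its adjoint $F_i^*\colon l_2^i\to\cH$; call $C_i:=\|F_i^*\|$. The operator $A$ is bounded by hypothesis, so each composition $A\circ F_i^*\colon l_2^i\to\cH'$ is bounded with norm at most $\|A\|\,C_i$. Applying the triangle inequality to the finite sum and then the elementary bound $\sum_{i=1}^n\|v^i\|\leq\sqrt{n}\,\bigl(\sum_{i=1}^n\|v^i\|^2\bigr)^{1/2}$ (Cauchy--Schwarz in $\BR^n$), I get
$$\|K_A(v^1,\dots,v^n)\|_{\cH'}\leq\sum_{i=1}^n\|A\|\,C_i\,\|v^i\|_{l_2^i}\leq\sqrt{n}\,\|A\|\,\Bigl(\max_{1\leq i\leq n}C_i\Bigr)\,\|(v^1,\dots,v^n)\|_{(l_2)^n},$$
which proves $K_A$ is bounded with an explicit estimate for its operator norm.

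There is no real obstacle here: the statement is essentially bookkeeping, and the only subtlety is being careful with the norm on the product Hilbert space $(l_2)^n$ supplied by Lemma 3.4, which is why the factor $\sqrt{n}$ appears after reducing $\ell^1$-type sums to $\ell^2$-type sums. I would therefore keep the write-up short, presenting linearity in one line and boundedness via the two displayed inequalities above.
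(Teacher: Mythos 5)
Your proof is correct, but it takes a different route from the paper on the boundedness half. You estimate $\|K_A\|$ directly via the triangle inequality and a Cauchy--Schwarz step in $\BR^n$, arriving at the bound $\sqrt{n}\,\|A\|\max_i\|F_i^*\|$. The paper instead first writes down the adjoint explicitly, $K_A^*(g)=(F_1A^*(g),\dots,F_nA^*(g))$ (verifying the adjoint identity), and then bounds $\|K_A^*(g)\|^2_{(l_2)^n}=\sum_i\|F_iA^*(g)\|^2\leq\|A\|^2(\sum_i B_i)\|g\|^2$, concluding $\|K_A\|=\|K_A^*\|\leq\|A\|\sqrt{B_1+\dots+B_n}$ where $B_i$ is the upper frame bound for $F_i$. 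The paper's detour through the adjoint buys two things: the constant is slightly sharper (a Euclidean sum $\sqrt{\sum_i B_i}$ rather than $\sqrt{n}\max_i$, since the product norm on $(l_2)^n$ is itself an $\ell^2$-sum so no $\ell^1$-to-$\ell^2$ loss occurs), and, more importantly, the explicit formula for $K_A^*$ is exactly what is needed later in Theorem 3.7 and the iteration $\bS_{\al,W,\bP}(K_A^*(g-K_A(a))+a)$, so the paper's proof does double duty. If you used the full Cauchy--Schwarz bound $\sum_iC_i\|v^i\|\leq(\sum_iC_i^2)^{1/2}(\sum_i\|v^i\|^2)^{1/2}$ instead of pulling out the maximum, your direct argument would recover essentially the same constant. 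The linearity part is identical in both treatments.
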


\begin{proof}
Since $F^*_i$, A are linear operators, $AF^*_i$ are linear operators,
 for $i=1,\dots,n$, and consequently 
$$K_A=\sum_{i=1}^n AF^*_i$$
is a linear operator, too.

After that, we indicate $K_A^*$ is bounded, as defined below,
$$K_A^*:\cH'\lo (l_2)^n, \quad K^*_A(g)=(F_1A^*(g),\dots,F_nA^*(g))$$
and is an adjoint operator for $K_A$, because for $h\in\cH'$  and 
$f=(f^1,\dots,f^n)\in (l_2)^n$, we have 
\begin{align*}
\langle K_A(f),h\rangle_{\cH'} &=
\sum_{i=1}^n \langle AF^*_i(f^i),h\rangle_{\cH'} \\
&=\sum_{i=1}^n \langle f^i,F_iA^*(h)\rangle_{l_2^i}\\
&=\langle f,(F_1A^*(h),\dots,F_nA^*(h))\rangle_{(l_2)^n} \\
&=\langle f,K^*_A(h)\rangle_{(l_2)^n}
\end{align*}
Then, for $f\in\cH'$, we have
\begin{align*}
\|K^*_A(f)\|^2_{(l_2)^n} &=\|F_1A^*(f)\|^2_{l_2^1}+ \dots+ 
\|F_nA^*(f)\|^2_{l_2^n}\\
&\leq \|F_1\|^2 \|A\|^2 \|f\|_{\cH'}^2+\dots+\|F_n\|^2 \|A\|^2 \|f\|^2_{\cH'}.
\end{align*}
By assuming $B_i$ to be an upper bound for $F_i$ and $\til{c}$ 
to be a bound for $A$, we have
$$\|K_A\|=\|K^*_A\|< \til{c} \sqrt{B_1+\dots+B_n}:=c.$$
\end{proof}

In Lemma \ref{3.5}, if was 
 proved that $\|K_A\|<c$; for simplicity, we will restrict ourselves to 
case $c=1$, without loss of generality, since $K_A$ can always be renormalized.

\begin{remark}\label{3.6}
Now, we utilize the results
 in Remark 3.2 for $\cH=(l_2)^n$, $\cH_i=l_2^i$, 
$A_i=\{E_\la^i\}_{\la\in\La_i}$ as identified in Lemma \ref{3.4}.
By assuming 
$W_i=\{\al_i w_i\}_{\la\in\La_i}$ such that $\al_i,w_i\in\BR$ for 
$i=1,\dots,n$, $f^i\in l_2^i$, $f^i=\{f^i_\la\}_{\la\in\La_i}$, 
$f=(f^1,\dots,f^n)\in(l_2)^n$,
$$|f^i|_{p_i,w_i}=\sum_{\la\in\La_i} w_i|f_\la^i|^{p_i}, \quad 
|\!|\!|f|\!|\!|=(|f^1|_{p_1,w_1},\dots,|f^n|_{p_n,w_n}),$$
$\al=(\al_1,\al_2,\dots,\al_n)$, we have
\begin{align*}
|\!|\!|f|\!|\!|^{\bP}_{W,\bP} &=\sum_{\la\in\La_1} \al_1 
w_1|<f^1,e_\la^1>_{l_2^1} 
|^{p_1}+\dots+\sum_{\la\in\La_n} \al_n w_n|<f^n, e^n_\la>_{l_2^n}|^{p_n} \\
&=(\al_1,\dots,\al_n). (\sum_{\la\in\La_1} w_1|<f^1,e^1_\la>_{l_2^1} |^{p_1} 
,\dots, \sum_{\la\in\La_n} w_n| <f^n,e_\la^n>_{l_2^n}|^{p_n})\\
&=(\al_1,\dots,\al_n). (\sum_{\la\in\La_1} w_1|f_\la^1|^{p_1} 
,\dots,\sum_{\la\in\La_n} w_n|f_\la^n|^{p_n}) \\
&=(\al_1,\dots,\al_n). (|f^1|_{p_1,w_1} ,\dots,|f^n|_{p_n,w_n}) \\
&=\al.|\!|\!|f|\!|\!|
\end{align*}
Therefore
$$|\!|\!|f|\!|\!|^{\bP}_{W,\bP} =\al.|\!|\!|f|\!|\!|$$
Also, we have
\begin{align*}
\bS_{W,\bP}(h) &=\left( \sum_{\la\in\La_1} S_{w_\la,p_1} (\langle 
h^1,e^1_\la\rangle_{l_2^1})e_\la^1,\dots, \sum_{\la\in\La_n} S_{w_\la,p_n} 
(\langle h^n,e_\la^n\rangle_{l_2^n}) e_\la^n\right) \\
&=\left( \sum_{\la\in\La_1} S_{w_1\al_1,p_1}(h^1_\la) e_\la^1,\dots, 
\sum_{\la\in\La_n} S_{w_n \al_n,p_n}(h^n_\la)e_\la^n\right) \\
&=\left( \{S_{w_1\al_1,p_1} (h^1_\la)\}_{\la\in\La_1} ,\dots, 
\{S_{w_n\al_n,p_n} (h_\la^n)\}_{\la\in\La_n}\right).
\end{align*}
\end{remark}

Remark 3.6
 and Lemma 3.5 in this section, together
 with Propositions 2.2, 2.19
 and Theorem 2.15 
in the previous section, give proofs of following theorems.

\begin{theorem}\label{3.7}
Suppose $K_A:(l_2)^n\lo \cH'$ is as identified in Lemma 3.5 and 
$\bP=\{p_1,p_2,\dots,p_n\}$ such that $1\leq p_i\leq 2$ for $i=1,\dots,n$.

For $\al=(\al_1,\dots,\al_n)\in R^n$, $a\in (l_2)^n$
and $g\in\cH'$, define  functionals $\Phi,\Phi^{S\cup R}$ on 
$(l_2)^n$ by 
\begin{align*}
\Phi(f) &=\|g-K_A(f)\|^2_{\cH'}+ \al.|\!|\!|f|\!|\!|, \\
\Phi^{S\cup R} (f;a)&=\Phi(f)+\|f-a\|^2_{(l_2)^n} -\|K_A(f)-K_A(a)\|^2_{\cH'}
\end{align*}
such that $|\!|\!|f|\!|\!|=(|f^1|_{p_1,w_1},\dots,|f^n|_{p_n,w_n})$, 
$|f^i|_{p_i,w_i}=\sum_{\la\in\La_i} w_i|f^i_\la|^{p_i}$
for $f=(f^1,\dots,f^n)\in (l_2)^n$ and $w_i>0$; $i=1,\dots,n$.
Also, for $W=(w_1,\dots,w_n)$, $T=\{t_1,\dots,t_n\}$, $f=(f^1,\dots,f^n)\in 
(l_2)^n$, define operators $\bS_{T,W,\bP}$ by 
$$\bS_{T,W,\bP}(f)=(\{S_{t_1w_1,p_1} 
(f^1_\la)\}_{\la\in\La_1},\dots,\{S_{t_nw_n,p_n} (f_\la^n)\}_{\la\in\La_n})$$
with  functions $S_{tw,p}$ from $R$ to itself given by Lemma \ref{2.1}.

By these assumptions, we will have 

A) $f_m:=\text{minimizer of the functional }\; \Phi^{S\cup 
R}=\bS_{\al,W,\bP}(K^*_A(g-K_A(a))+a)$

B) For all  $h\in(l_2)^n$, one has 
$$\Phi^{S\cup R}(f_m+h;a)\geq \Phi^{S\cup R}(f_m;a)+\|h\|^2_{(l_2)^n}.$$
\end{theorem}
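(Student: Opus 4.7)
The plan is to recognize Theorem 3.7 as a direct specialization of Proposition 2.2 to the concrete Hilbert space $\cH = (l_2)^n$ equipped with the orthonormal basis $\sA$ constructed in Lemma 3.4, with operator $K_A$ in place of the abstract $K$. All of the structural hypotheses of Proposition 2.2 have either been verified in Lemma 3.5 (boundedness and $\|K_A\|<1$ after renormalization) or in Lemma 3.4 (the orthonormal basis), so the work is essentially translational: identify each piece of data in Theorem 3.7 with the corresponding piece of data in Proposition 2.2, then invoke that result.

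First I would set $\Ga_i := \La_i$ and $\Ga := \La_1 \cup \cdots \cup \La_n$, and take the orthonormal basis $\{\var_\ga\}_{\ga\in\Ga}$ to be $\sA = \sA_1 \cup \cdots \cup \sA_n$ as in Lemma 3.4(B). For the weight family I would set $W_i := \{\al_i w_i\}_{\la \in \La_i}$, so that $W := W_1 \cup \cdots \cup W_n$ is a sequence of strictly positive weights provided $\al_i, w_i > 0$; this meets the hypothesis $w_\ga > c > 0$ of Proposition 2.2, with $c = \min_i \al_i w_i$. Under this dictionary, Remark 3.6 gives precisely the two identifications needed, namely
\begin{align*}
|\!|\!|f|\!|\!|^{\bP}_{W,\bP} &= \al \cdot |\!|\!|f|\!|\!|, \\
\bS_{W,\bP}(h) &= \bS_{\al,W,\bP}(h),
\end{align*}
so the functional $\Phi_{W,\bP}^{S\cup R}$ of Proposition 2.2 coincides with the $\Phi^{S\cup R}$ of Theorem 3.7, and the thresholding operator of Proposition 2.2 coincides with $\bS_{\al,W,\bP}$.

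With these identifications in place, part (A) follows immediately from Proposition 2.2(A): the minimizer of $\Phi^{S\cup R}(\,\cdot\,;a)$ equals
\[
\bS_{W,\bP}(a + K_A^*(g - K_A a)) = \bS_{\al,W,\bP}(K_A^*(g - K_A(a)) + a),
\]
which is exactly the formula claimed for $f_m$. Likewise, part (B) is the direct transcription of Proposition 2.2(B) to the space $(l_2)^n$, giving
\[
\Phi^{S\cup R}(f_m + h; a) \geq \Phi^{S\cup R}(f_m; a) + \|h\|^2_{(l_2)^n}
\]
for every $h \in (l_2)^n$.

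The main (and only) obstacle is bookkeeping: one has to be careful that the weights $W_i = \{\al_i w_i\}_{\la \in \La_i}$ used here absorb the regularization parameters $\al_i$ of Theorem 3.7 into the weights of Proposition 2.2, and that the renormalization of $K_A$ discussed after Lemma 3.5 justifies the $\|KK^*\| < 1$ hypothesis. Once these identifications are explicitly written down, no further computation is needed; the proof is a one-line citation of Proposition 2.2.
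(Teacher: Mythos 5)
Your proposal is correct and is exactly the route the paper takes: the paper offers no written proof of Theorem 3.7 beyond the remark that it follows from Lemma 3.5, Remark 3.6, and Proposition 2.2, and your dictionary ($\Ga_i=\La_i$, basis $\sA$ from Lemma 3.4, weights $W_i=\{\al_i w_i\}_{\la\in\La_i}$, renormalized $K_A$) is precisely the specialization the author intends, with the identifications $|\!|\!|f|\!|\!|^{\bP}_{W,\bP}=\al\cdot|\!|\!|f|\!|\!|$ and $\bS_{W,\bP}=\bS_{\al,W,\bP}$ already computed in Remark 3.6. Your observation that one implicitly needs $\al_i>0$ (not merely $\al\in R^n$) for the weight hypothesis $w_\ga>c>0$ of Proposition 2.2 is a fair and worthwhile caveat that the paper glosses over.
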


\begin{theorem}\label{3.8}
Pick $f_0$ in $(l_2)^n$. We define the $f_k$ recursively by 
$$f_k:=\arg -\min(\Phi^{S\cup R} (f;f_{k-1})).$$
There is $f^*\in (l_2)^n$ such that $f^*$ is the minimizer of $\Phi$ and 
$\|f_k-f^*\|_{(l_2)^n}\lo 0$. 
\end{theorem}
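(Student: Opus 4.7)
The plan is to reduce Theorem 3.8 to an application of Theorems 2.14 and 2.15 (the weak and strong convergence results of Section 2) via the dictionary set up in Lemma 3.4, Lemma 3.5, and Remark 3.6. Nothing new needs to be proved from scratch; the work is entirely to check that the hypotheses of Lemma 2.8 are satisfied in the concrete setting $\cH = (l_2)^n$ with operator $K_A$ and basis $\sA$.

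First I would make the identifications explicit: take $\cH := (l_2)^n$ with the inner product from Lemma 3.4(A), take the orthonormal basis $\sA = \{E_\ga^j\}$ from Lemma 3.4(B) so that the index set decomposes as $\Ga = \La_1 \cup \dots \cup \La_n$, take the operator $K := K_A$ from Lemma 3.5, and take the weight system $W_i = \{\al_i w_i\}_{\la \in \La_i}$. By the remark following Lemma 3.5 we may assume $\|K_A\| < 1$ after renormalization. The weights $\al_i w_i$ are uniformly bounded below by $c := \min_i \al_i w_i > 0$, and each $p_i \in [1,2]$ by hypothesis, so all assumptions of Lemma 2.8 hold. Under the calculation of $|\!|\!|f|\!|\!|^{\bP}_{W,\bP}$ and $\bS_{W,\bP}(h)$ carried out in Remark 3.6, the functionals $\Phi$ and $\Phi^{S\cup R}$ in Theorem 3.8 coincide exactly with the functionals $\Phi_{W,\bP}$ and $\Phi^{S\cup R}_{W,\bP}$ of Lemma 2.8.

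Next I would invoke Corollary 2.4 to rewrite the recursion $f_k = \arg\min \Phi^{S\cup R}(f; f_{k-1})$ in closed form as
$$f_k = \bS_{\al,W,\bP}(f_{k-1} + K_A^*(g - K_A f_{k-1})),$$
which is precisely the iteration $f_k = \bT^k(f_0)$ used in Section 2. Theorem 2.14(A) then produces an $f^* \in (l_2)^n$ which is a minimizer of $\Phi$ and to which $f_k$ converges weakly. Finally, Theorem 2.15(D) upgrades this to strong convergence $\|f_k - f^*\|_{(l_2)^n} \to 0$, which is the conclusion.

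There is essentially no mathematical obstacle here; the entire content of the theorem is the translation between the abstract Hilbert-space formulation of Section 2 and the concrete multi-frame formulation via Remark 3.6. The only minor point to flag in the write-up is the renormalization that enforces $\|K_A\| < 1$, since Lemma 2.8 requires the operator norm to be strictly bounded by one; this is handled exactly as indicated in the paragraph following Lemma 3.5.
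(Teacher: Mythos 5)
Your proposal is correct and follows essentially the same route as the paper, which likewise obtains Theorem 3.8 by combining the identifications of Lemma 3.5 and Remark 3.6 (with the renormalization $\|K_A\|<1$) with the convergence results of Section 2. The only cosmetic difference is that you cite Theorem 2.14 for the existence of the minimizer before invoking Theorem 2.15 for strong convergence, whereas the paper cites Proposition 2.2 and Theorem 2.15 directly; the substance is identical.
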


\begin{theorem}\label{3.9}
Suppose $K_A:(l_2)^n\lo \cH'$ is a linear bounded operator as defined in Lemma 3.5,
 $1\leq p_i\leq 2$ for $i=1,\dots,n$, and either there exists $j$ such 
that $p_j>1$ or $N(K_A)=0$. Also suppose $f^*_{\al,g}$ is the minimizer of the
functional $\Phi_{\al;g}(f)=\|g-K_A(f)\|^2_{\cH'}+\al.|\!|\!|f|\!|\!|$ for 
$\al\in R^n$, $g\in\cH'$. Pick $f_0$ in $(l_2)^n$ and suppose 
$\{g_t\}_{t=1}^{+\infty}$ is a sequence in $\cH'$ such that 
$\|g_t-Kf_0\|\leq\eps_t$, where $\{\eps_t\}_{t=1}^{+\infty}$ is a sequence of 
strictly positive numbers that converges to zero as $t\lo\infty$.
By assuming $\{\al(\eps_t)\}_{t=1}^{+\infty}$ such that 
$\al(\eps_t)=(\al_1(\eps_t),\dots,\al_n(\eps_t))$ and for every $i,j=1,\dots,n$
$$\lim_{t\rig+\infty} \al_i(\eps_t)=0,\quad \lim_{t\rig+\infty} 
\f{\eps_t^2}{\al_i(\eps_t)}=0,\quad \lim_{t\rig+\infty} 
\f{\al_i(\eps_t)}{\al_j(\eps_t)}=1.$$
Then we have
$$\|f^*_{\al(\eps_t),g_t}-f^\dagger\| \lo 0 \quad as \quad t\lo+\infty$$
where $f^\dagger$ is the unique minimal element with regard to
$\boldsymbol{1}\cdot |\!|\!|\cdot|\!|\!|$
in $S=N(K_A)+f_0=\{f\in (l_2)^n; K(f)=K(f_0)\}$. Note that 
$\boldsymbol{1}=(1,\dots,1)$ and $\boldsymbol{1}.|\!|\!|f|\!|\!|=|f^1|_{p_1,w_1}
+\dots+|f^n|_{p_n,w_n}$.
\end{theorem}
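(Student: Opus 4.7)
The plan is to recognize Theorem 3.9 as a direct corollary of Proposition 2.19, obtained by transporting the multi-frame setting of this subsection into the abstract framework of Section 2 via the dictionary already assembled in Lemmas 3.4 and 3.5 and Remark 3.6. No genuinely new analytic step should be required; the argument will essentially be bookkeeping plus a single appeal to Proposition 2.19.

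First I would set $\cH := (l_2)^n$, which is a Hilbert space by Lemma 3.4(A), equipped with the orthonormal basis $\sA = \{E^j_\la : 1\leq j\leq n,\ \la\in\La_j\}$ of Lemma 3.4(B). I would then take $\Ga := \La_1\cup\dots\cup\La_n$ with $\Ga_i:=\La_i$ and assign the block-constant weights $w_\ga := w_i$ for $\ga\in\La_i$; since the finitely many $w_i$ are strictly positive, $c := \min_i w_i > 0$ is a uniform lower bound for $W = (w_\ga)_{\ga\in\Ga}$, as Proposition 2.19 demands. The operator $K := K_A : \cH \lo \cH'$ is bounded by Lemma 3.5, and the strict inequality $\|K_A\|<1$ may be assumed by the renormalization remark recorded immediately after that lemma.

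Next I would match the two functionals. Unpacking the definition of $|\!|\!|\cdot|\!|\!|^{p_i}_{W_i,p_i}$ under block-constant weights gives $|\!|\!|f|\!|\!|^{p_i}_{W_i,p_i} = w_i\sum_{\la\in\La_i}|f^i_\la|^{p_i} = |f^i|_{p_i,w_i}$, parallel to the identity in Remark 3.6 except with $\al_i$ kept explicit rather than absorbed into the weights. Consequently
$$\Phi_{\al,W,\bP;g}(f)=\|K_A f-g\|^2_{\cH'}+\sum_{i=1}^n\al_i|\!|\!|f|\!|\!|^{p_i}_{W_i,p_i}=\|K_A f-g\|^2_{\cH'}+\al\cdot|\!|\!|f|\!|\!|=\Phi_{\al;g}(f),$$
so the two minimizers coincide. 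The same unpacking yields $|\!|\!|f|\!|\!|^{\bP}_{W,\bP}=\boldsymbol{1}\cdot|\!|\!|f|\!|\!|$, so the element $f^\dagger\in S=N(K_A)+f_0$ of minimal $\boldsymbol{1}\cdot|\!|\!|\cdot|\!|\!|$ sought by Theorem 3.9 is exactly the unique minimal element provided by Lemma 2.18, whose dichotomy hypothesis (either $N(K_A)=\{0\}$ or some $p_j>1$) is in force by assumption.

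With the dictionary completed and every hypothesis of Proposition 2.19 verified ($1\leq p_i\leq 2$, $w_\ga\geq c>0$, $\|K_A\|<1$, the existence/uniqueness dichotomy, and the three limit conditions $\al_i(\eps_t)\lo 0$, $\eps_t^2/\al_i(\eps_t)\lo 0$, $\al_i(\eps_t)/\al_j(\eps_t)\lo 1$), a direct invocation of Proposition 2.19 for the sequence $(\eps_t,g_t)$ yields $\|f^*_{\al(\eps_t),g_t}-f^\dagger\|\lo 0$ as $t\lo +\infty$, which is precisely the conclusion. The only friction I anticipate is notational: two parallel naming schemes must be kept aligned, the abstract $(\var_\ga,w_\ga,\Ga)$ language of Section 2 and the frame language $(E^j_\la,w_i,\La_i)$ of this subsection, but Remark 3.6 has already carried out this translation explicitly, so no obstacle of substance remains.
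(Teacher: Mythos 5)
Your proposal is correct and follows exactly the route the paper intends: the paper offers no separate proof of Theorem 3.9 beyond the remark that Lemma 3.5, Remark 3.6, and Proposition 2.19 "give proofs of the following theorems," and your write-up simply makes that reduction explicit (identifying $(l_2)^n$ with the abstract $\cH$, the blocks $\La_i$ with $\Ga_i$, the block-constant weights with $W$, and $\boldsymbol{1}\cdot|\!|\!|\cdot|\!|\!|$ with $|\!|\!|\cdot|\!|\!|^{\bP}_{W,\bP}$). Your choice to keep the $\al_i$ as explicit regularization parameters rather than absorbing them into the weights as in Remark 3.6 is in fact the correct way to invoke Proposition 2.19 here, since the $\al_i(\eps_t)$ must vary with $t$.
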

\subsection{Inverse imaging with mixed penalties}
In [4], I. Daubechies, G. Teschke have solved the following variational 
problem:
\begin{equation*}
\inf {\cal{F}}_g(v,u),\tag{$16$}
\end{equation*}
where 
$${\cal{F}}_g(v,u)=\| g-K(u+v)\|^2_{L^2(\Om)}+\ga\|v\|^2_{H^{-1}(\Om)} 
+2\al|u|_{B^1_1(L_1(\Om))}$$
such that 
$$\|v\|^2_{H^{-1}(\Om)} \sim \sum_\la 2^{-2|\la|} |v_\la|^2,$$
$$|u|_{B^\beta_p(L_p(\Om))} \sim \left( \sum_\la 2^{|\la|sp} 
|u_\la|^p\right)^{\f{1}{p}} \quad \text{with}\quad s=\beta+1-\f{2}{p}$$
where $u_\la,v_\la$ denote the $\la$-th wavelet coefficients.

Moreover, in [5], C. Demol, M. Defrise have considered the following 
mixed-penalty functional:
\begin{equation*}
\Phi(u,v)=\|A(u+v)-g\|^2 +2\tau |\!|\!| u|\!|\!| +\mu\|v\|^2 \tag{$17$}
\end{equation*}
where $g$ is the image or measurement vector containing $N$ data values and 
$A$ is the $N\ti M$ matrix modeling the imaging process ($A$ is assumed to be 
known). Also, $\|v\|^2=\sum_{m=1}^M |v_m|^2$ denotes the squared $l^2$-norm of 
$v$ and $|\!|\!|u|\!|\!|=\sum_{m=1}^M |u_m|$ denotes the $l^1$-norm of $u$.

Both functionals (16),(17) are special cases of the functional $\Psi$ from 
$\cH\ti \cH$ to $R$ as follows:
\begin{equation*}
\Psi(u,v)=\|k(u+v)-g\|^2+\sum_{\ga\in\Ga} w_{1,\ga} 
|u_\ga|^{p_1}+\sum_{\ga\in\Ga} w_{2,\ga} |v_\ga|^{p_2} \tag{18}
\end{equation*}
To solve this functional, we need to define the linear bounded operator $L$ on 
${\boldsymbol{\cH}}=\cH\ti\cH$ as follows:
\begin{align*}
L:\boldsymbol{\cH}&\lo \cH'\\
L(u,v) &=k(u+v)
\end{align*}
By this definition, we will have
\begin{align*}
&L^*:{\cH}'\lo{\boldsymbol{\cH}}\\
&L^*(y)=(K^*(y),K^*(y))
\end{align*}
because 
\begin{align*}
<(u,v),L^*(y)>_{\boldsymbol{\cH}} 
&=<(u,v),(K^*(y),K^*(y))>_{\boldsymbol{\cH}}\\
&=<u,K^*(y)>_{\cH} +<v,K^*(y)>_{\cH}\\
&=<K(u),y>_{\cH'} +<K(v),y>_{\cH'}\\
&=<K(u+v),y>_{\cH'}=<L(u,v),y>_{\cH'}
\end{align*}
With the assumption of the Hilbert space ${\boldsymbol{\cH}}=\cH\ti\cH$
 with the basis 
$\{(\var_\ga,0)\}_{\ga\in\Ga} \cup \{(0,\var_\ga)\}_{\ga\in\Ga}$ and dividing 
the basis into two parts $\{(\var_\ga,0)\}_{\ga\in\Ga}$ and 
$\{(0,\var_\ga)\}_{\ga\in\Ga}$, and the linear bounded operator $L$ from 
${\boldsymbol{\cH}}$ to $\cH'$, and also by considering $W=W^1\cup 
W^2=\{w_{1,\ga}\}_{\ga\in\Ga}\cup \{w_{2,\ga}\}_{\ga\in\Ga}$, we will have the 
functional $\Phi$ in section 2 as follows.
\begin{align*}
\Phi(f) &=\|L(f)-g\|^2+\sum_{\ga\in\Ga} w_{1,\ga} 
|<f,(\var_{\ga,0})>|^{p_1}+\sum_{\ga\in\Ga} w_{2,\ga} |<f,(0,\var_\ga)>|^{p_2} 
\\
\Phi(u,v)&=\|L(u,v)-g\|^2+\sum_{\ga\in\Ga} 
w_{1,\ga}|<(u,v),(\var_{\ga,0})>|^{p_1}+ \sum_{\ga\in\Ga} w_{2,\ga} 
|<(u,v),(0,\var_\ga)>|^{p_2} \\
\Phi(u,v) &=\|K(u+v)-g\|^2+\sum_{\ga\in\Ga} 
w_{1,\ga}|<u,\var_\ga>|^{p_1}+\sum_{\ga\in\Ga}w_{2,\ga}|<v,
\var_\ga>|^{p_2}\\
\Phi(u,v)&=\|K(u+v)-g\|^2+ |\!|\!|u|\!|\!|^{p_1}_{W^1,p_1} 
+|\!|\!|v|\!|\!|^{p_2}_{W^2,p_2}
\end{align*}
Thus
$$\Phi(u,v) =\Psi(u,v) \quad \text{for} \quad (u,v)\in{\boldsymbol{\cH}}.$$
By the theorems in section 2, we conclude that the sequence 
$\{f^n\}_{n=1}^\infty$ is strongly covergent to the minimizer of the 
functional $\Psi$ in (18), as follows:
$$f^n=\bS_{W,\bP}(f^{n-1}+L^*(g-L(f^{n-1})))$$
such that, for $\bP=\{p_1,p_2\}$, $W=\{w_{1,\ga}\}_{\ga\in\Ga} \cup 
\{w_{2,\ga}\}_{\ga\in\Ga}$. By elimination and substitution, we will have the 
sequence $\{f^n\}_{n=1}^{+\infty}$ as follows.
\begin{align*}
f^n=(u^n,v^n) &=\bS_{W,\bP} ((u^{n-1}, v^{n-1})+L^*(g-L(u^{n-1},v^{n-1}))) \\
&=\bS_{W,\bP} ((u^{n-1},v^{n-1})+(K^*(g-K(u^{n-1}+v^{n-1})), 
K^*(g-K(u^{n-1}+v^{n-1})))) \\
&=\bS_{W,\bP} (u^{n-1}+K^*(g-K(u^{n-1}+v^{n-1})), 
v^{n-1}+K^*(g-K(u^{n-1}+v^{n-1}))) \\
&=\sum_{\ga\in\Ga} S_{w_{1,\ga},p_1} ((u^{n-1}+K^*(g-K(u^{n-1}+v^{n-1})), 
v^{n-1}+K^*(g-K(u^{n-1}+v^{n-1}))), (\var_\ga,0))(\var_\ga,0) \\
&+\sum_{\ga\in\Ga} S_{w_{2,\ga},p_2} ((u^{n-1}+K^*(g-K(u^{n-1}+v^{n-1})), 
v^{n-1}+K^*(g-K(u^{n-1}+v^{n-1}))), (0,\var_\ga))(0,\var_\ga)\\
&=(\sum_{\ga\in\Ga} S_{w_{1,\ga},p_1} (u^{n-1}+K^*(g-K(u^{n-1}+v^{n-1})), 
\var_\ga) \var_\ga\\
&, \sum_{\ga\in\Ga} S_{w_{2,\ga},p_2} 
(v^{n-1}+K^*(g-K(u^{n-1}+v^{n-1})),\var_\ga)\var_\ga) 
\end{align*}
Therefore, we have $u^n,v^n$ as follows:
\begin{align*}
u^n &=\sum_{\ga\in\Ga} S_{w_{1,\ga},p_1} 
(u^{n-1}+K^*(g-K(u^{n-1}+v^{n-1})),\var_\ga) \var_\ga,\\
v^n&=\sum_{\ga\in\Ga} S_{w_{2,\ga},p_2} 
(v^{n-1}+K^*(g-K(u^{n-1}+v^{n-1})),\var_\ga)\var_\ga.
\end{align*}
\subsection{Linear inverse problems with multi-constraints}
In this subsection, we consider a generalization for [3] that differes from 
the one given in section 2.

Minimization of the following functionals has been considered in [1,2,3,6].
\begin{align*}
& \|Kf-g\|^2_Y+\mu\|f\|_X^2\\
&\|I(f)-g\|^2_{L_2(I)} +\la\|f\|^2_{W^\beta(L_2(I))}; 
\|f\|^2_{W^\beta(L_2(I))} = 
\sum_{0\leq k} \sum_{j\in \BZ^2_K} \sum_{\psi\in\psi_k} 2^{2\beta k} 
|c_{j,k,\psi}|^2; \beta <\f{1}{2} \\
&\|I(f)-g\|^2_{L_2(I)}+\la\|f\|^\tau_{B^\beta_\tau(L_\tau(I))}; 
\|f\|^\tau_{B^\beta_\tau((L_\tau(I))}=
\sum_{0\leq k}\sum_{j\in\BZ_k^2} \sum_{\psi\in\psi_k} |c_{j,k,\psi}|^\tau\\
&\|Kf-g\|^2+\sum_{\la\in\La} 2^{\sig p|\la|} |<f,\Psi_\la>|^p
\end{align*}
All of above functionals are special cases of the following functional 
$$\|Kf-g\|^2 +\la\|f\|_Y^s$$
where $Y$ is a space that measures the {\it smoothness} of the approximations 
$f$, $\la$ is a positive parameter, and $s$ is an exponent that is chosen to 
make the computations (and analysis) easier. If the positive parameter $\la$ 
is large, then the smoothness of $f$ is important; if it is small, the 
approximation error between $g$ and $K(f)$ is important. 

The point arising at this time is that if we can simultaneously consider two 
or more constraints in a minimization problem, we can potentially obtain 
better results.

With this account, we tried to show the importance of the minimization problem 
of the following functional:
$$\|Kf-g\|^2_{\cH'} +\la_1\|f\|_1^{s_1}+\dots+\la_n \|f\|_n^{s_n}$$
or 
$$\|Kf-g\|^2_{\cH'} +|\!|\!|f|\!|\!|^{p_1}_{W^1,p_1}+\dots+|\!|\!|f|\!|\!|^{p_n}
_{W^n,p_n}$$
where 
$$|\!|\!|f|\!|\!|_{W^i,p_i} =(\sum_{\ga\in\Ga} 
w_{i,\ga}|<f,\var_\ga>|^{p_i})^{\f{1}{p_i}}$$
for $1\leq i\leq n$; $1\leq p_i\leq 2$, $W^i=\{w_{i,r}\}_{\ga\in\Ga}$, 
$\{\var_\ga\}_{\ga\in\Ga}$ is an orthonormal basis for Hilbert space $\cH$.

We call the new constraint $\sum_{i=1}^n |\!|\!|f|\!|\!|^{p_i}_{W^i,p_i}$ {\it 
multi-constraints}, which are different from the constraint $\sum_{i=1}^n 
|\!|\!|f|\!|\!|^{p_i}_{W_i,p_i}$ introduced in section 2, which we called {\it 
mixed 
multi-constraints}. We cannot obtain the solution of this problem directly (as 
was done in subsection 3.2) from the theorems in section 2, but we can solve 
this problem by modifying some lemmas and theorems of section 2 and through a 
procedure similar to what was carried out there.

Except for Lemma 2.1, Proposition 2.2, part A of Lemma 2.8 and part C of
Theorem 2.15 which are rewritten as follows, all the modifications are trivial.
\begin{lemma}
The minimizer of the function $M(x)=x^2-2bx+\sum_{i=1}^n c_i |x|^{p_i}$ for 
$\forall i, 1\leq i\leq n$; $p_i\geq 1$, $c_i>0$ is 
$S_{(c_1,\dots,c_n),(p_1,\dots,p_n)}(b)$, where the function 
$S_{(c_1,\dots,c_n),(p_1,\dots,p_n)}$ from $R$ to itself is defined by 
$$S_{(c_1,\dots,c_n),(p_1,\dots,p_n)} (t) =
\begin{cases}
F^{-1}(t) & B=\phi\\
F_1^{-1}(t) & B\neq\phi, t\in \left(\f{\sum_{i\in B}c_i}{2},+\infty\right) \\
0 & B\neq\phi, t\in \left[-\f{\sum_{i\in B}c_i}{2}, \f{\sum_{i\in B} 
c_i}{2}\right] \\
F_2^{-1}(t) & B\neq\phi, t\in\left(-\infty, \f{-\sum_{i\in B}c_i}{2}\right) 
\end{cases}$$
where $B=\{i| p_i=1\}$ and the functions $F_1,F_2,F$ are defined by 
\begin{align*}
&F(x)=x+\Sign x\f{\sum_{i=1}^n p_ic_i |x|^{p_i-1}}{2} \quad \text{for}\quad 
x\in R,\\
&F_1(x)=x+\f{\sum_{i\in B}c_i+\sum_{i\not\in B}p_ic_i|x|^{p_i-1}}{2}
 \quad \text{for}\quad x>0,\\
&F_2(x)=x-\f{\sum_{i\in B}c_i+\sum_{i\not\in B}p_ic_i|x|^{p_i-1}}{2}
 \quad \text{for}\quad x<0.
\end{align*}
\end{lemma}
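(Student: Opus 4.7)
The plan is to exploit strict convexity and subdifferential calculus, splitting on whether the set $B=\{i\mid p_i=1\}$ is empty and on the sign of the minimizer. First I would observe that $M$ is strictly convex (since $x^2$ is strictly convex and each $c_i|x|^{p_i}$ is convex for $c_i>0,\,p_i\geq 1$) and coercive (the quadratic term dominates as $|x|\to\infty$). Hence $M$ admits a unique minimizer $x^*$, and by Fermat's rule this is the unique $x^*$ with $0\in\partial M(x^*)$.

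Next I would compute the subdifferential explicitly. For $i\notin B$, the map $x\mapsto|x|^{p_i}$ is everywhere differentiable with derivative $p_i\,\Sign(x)|x|^{p_i-1}$; for $i\in B$, $|x|$ is differentiable away from $0$ with derivative $\Sign(x)$, and $\partial|\cdot|(0)=[-1,1]$. Therefore, at any $x\neq 0$, $M$ is differentiable with
$$\tfrac{1}{2}M'(x)=x-b+\tfrac{1}{2}\sum_{i=1}^n p_ic_i\,\Sign(x)\,|x|^{p_i-1},$$
while at $x=0$,
$$\tfrac{1}{2}\partial M(0)=\Bigl[-b-\tfrac{1}{2}\textstyle\sum_{i\in B}c_i,\;-b+\tfrac{1}{2}\sum_{i\in B}c_i\Bigr].$$

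Now I would carry out the case analysis. If $B=\emptyset$, $M$ is differentiable everywhere and $M'(x^*)=0$ rewrites exactly as $F(x^*)=b$. If $B\neq\emptyset$, I check the zero-subdifferential condition at $0$: $0\in\partial M(0)$ iff $|b|\leq\tfrac{1}{2}\sum_{i\in B}c_i$, yielding the middle ("$0$") branch of $S_{(c_1,\dots,c_n),(p_1,\dots,p_n)}$. For $b>\tfrac{1}{2}\sum_{i\in B}c_i$, the minimizer must satisfy $x^*>0$ (otherwise $0\in\partial M(x^*)$ would fail, either by the preceding computation or by the sign of $M'$ for $x<0$), and then $\tfrac{1}{2}M'(x^*)=0$ collapses to $F_1(x^*)=b$; symmetrically, $b<-\tfrac{1}{2}\sum_{i\in B}c_i$ forces $x^*<0$ and $F_2(x^*)=b$.

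The main technical point, and the only step that requires actual verification, is that $F$, $F_1$, and $F_2$ are invertible on the indicated target intervals, so that the formulas $F^{-1}(b)$, $F_1^{-1}(b)$, $F_2^{-1}(b)$ are well defined. I would show that $F$ is continuous, odd, and strictly increasing on all of $\BR$ (each summand $p_ic_i\Sign(x)|x|^{p_i-1}$ with $p_i>1$ is non-decreasing, and the leading $x$ is strictly increasing), hence a bijection $\BR\to\BR$. For $F_1$ on $(0,\infty)$, continuity and strict monotonicity are immediate; the endpoint behavior $\lim_{x\to 0^+}F_1(x)=\tfrac{1}{2}\sum_{i\in B}c_i$ and $\lim_{x\to\infty}F_1(x)=+\infty$ give the bijection onto $\bigl(\tfrac{1}{2}\sum_{i\in B}c_i,+\infty\bigr)$, which matches precisely the $b$-range of that branch; the argument for $F_2$ on $(-\infty,0)$ is symmetric. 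The three branches together with the flat "$0$" piece exhaust $\BR$, and by uniqueness of $x^*$ they give the minimizer as claimed.
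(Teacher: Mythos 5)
Your proof is correct, but it takes a genuinely different route from the paper's. The paper argues in an elementary way: it gets existence of a minimizer from coercivity and continuity, observes that for $B\neq\emptyset$ the minimizer must be either $0$ or a nonzero critical point of $M$, uses the ranges of $F_1,F_2$ to see which of these candidates is available for each range of $b$, and then — in the case $b>\frac{1}{2}\sum_{i\in B}c_i$, where both $0$ and $F_1^{-1}(b)$ survive as candidates — invokes an auxiliary ``Fact'' (a monotonicity argument on $M'$) to show $M(F_1^{-1}(b))\leq M(0)=0$ and thereby discard $0$. You instead note that $M$ is strictly convex and coercive, so the minimizer is unique and characterized by $0\in\partial M(x^*)$, and the explicit subdifferential (in particular $\frac{1}{2}\partial M(0)=[-b-\frac{1}{2}\sum_{i\in B}c_i,\,-b+\frac{1}{2}\sum_{i\in B}c_i]$) reads off all four branches at once, with no comparison of function values at competing candidates. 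This is the standard proximal-operator derivation: it buys uniqueness for free and replaces the paper's ad hoc Fact with Fermat's rule, at the cost of importing convex subdifferential calculus; the paper's version stays entirely within first-year calculus. Both arguments correctly reduce the well-definedness of $F^{-1},F_1^{-1},F_2^{-1}$ to continuity, strict monotonicity, and the endpoint limits matching the stated $t$-intervals, which you verify explicitly and the paper essentially asserts. No gaps.
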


\begin{proof}
Since, $\lim_{x\rig\pm\infty} M(x)=+\infty$ and $M$ is a continuous function, 
$M$ has minimizer on $R$.

For $B=\phi$, $M$ is differentiable, and the minimization reduces to solving 
the variational equation
$$b=x+\Sign x\f{\sum_{i=1}^n p_i c_i|x|^{p_i-1}}{2}$$
since, the real function
$$F(x)=x+\Sign x\f{\sum_{i=1}^n p_ic_i|x|^{p_i-1}}{2}$$
is a one-to-one map from $R$ to itself, the minimizer is $F^{-1}(b)$, i.e.
$$S(b)=F^{-1}(b).$$
For$B\neq\phi$, $M$ is differentiable only if $x\neq 0$; then either minimizer 
is $x=0$ or minimizer is an $x\neq 0$ such that $M'(x)=0$. Note that $F_1,F_2$ 
are one-to-one maps such that 
\begin{align*}
Domain F_1=(0,+\infty),\quad & \quad Rang F_1=(\f{\sum_{i\in 
B}c_i}{2},+\infty) \\
Domain F_2=(-\infty,0),\quad & \quad Rang F_2=(-\infty,\f{-\sum_{i\in 
B}c_i}{2}).
\end{align*}
\paragraph{Case 1:} $b\in\left( \f{\sum_{i\in B} c_i}{2},+\infty\right)$.

For $x>0$, we have
$$M'(x)=2x-2b+\sum_{i\in B}c_i+\sum_{i\not\in B} p_ic_i |x|^{p_i-1}$$
If $M'(x)=0$, then 
$$b=x+\f{\sum_{i\in B} c_i+\sum_{i\not\in B} p_ic_i|x|^{p_i-1}}{2} =F_1(x)$$
Since, $b\in Rang F_1$, we have
$$\exists ! x_0\in (0,+\infty); x_0=F_1^{-1}(b)$$
Therefore, $$M'(F_1^{-1}(b))=0.$$
For $x<0$, we have
$$M'(x)=2x-2b-\sum_{i\in B}c_i-\sum_{i\not\in B} p_ic_i |x|^{p_i-1}$$
If $M'(x)=0$, then
$$b=x-\f{\sum_{i\in B} c_i+\sum_{i\not\in B}p_ic_i |x|^{p_i-1}}{2}=F_2(x)$$
since, $b\not\in Rang F_2$,
$$\not\exists x\in (-\infty,0); b=F_2(x)$$
Therefore, 
$$\not\exists x\in (-\infty,0); M'(x)=0.$$
By above argumnet, we conclude $S(b)\in\{0,F_1^{-1}(b)\}$. By the following 
fact, we will have
$$M(F_1^{-1}(b))\leq 0 = M(0)$$
Then
$$S(b)=F_1^{-1}(b).$$
\paragraph{Fact:} If $M$ is a continuous function such that 

(i) $\lim_{x\rig \pm\infty} M(x)=+\infty$.

(ii) $M(0)=0$.

(iii) $(0,+\infty)\subs Domain M'$ and for every $x,x'\in (0,+\infty)$ such 
that $x'<x$ we have $M'(x')<M'(x)$.

(iv) $\exists ! x_m\in (0,+\infty); M'(x_m)=0$.\\
Then 
$$M(x_m)\leq 0.$$
If $M(x_m)>0$, then $\exists \del>0; \forall x\in (x_m-\del,x_m+\del)$ 
$M(x)>0$, therefore 
\begin{align*}
& \exists x_0\in (0,x_m); M(x_0)>0\\  
\Lo & M'(x_0) <M'(x_m)=0\\
\Lo & \forall x\in (0,x_0) M'(x)<M'(x_0)<0\\
\Lo & \forall x\in (0,x_0) M(x)>M(x_0)>0
\end{align*}
This is a contradiction, because $M(0)=0$ and $f$ is a continuous function.
\paragraph{Case 2:} $b\in (-\infty, \f{-\sum_{i\in B} c_i}{2})$.

By an argument as case 1, we conclude $S(b)=F_2^{-1}(b)$.
\paragraph{Case 3:} $b\in \left[ \f{-\sum_{i\in B}c_i}{2}, \f{\sum_{i\in 
B}c_i}{2}\right]$.

We have $b\not\in Rang F_1$, $b\not\in Rang F_2$, then
$$\not\exists x\neq 0;\quad M'(x)=0$$
Therefore,
$$S(b)=0.$$
\end{proof}
\begin{proposition}
Suppose $K:\cH\lo \cH'$ is an operator, with $\|KK^*\|<1$, 
$(\var_\ga)_{\ga\in\Ga}$ is an orthonormal basis for $\cH$, and 
$W^i=(w_{i,\ga})_{\ga\in\Ga}$ are a sequences such that $\forall \ga\in\Ga$, 
$w_{i,\ga}>c>0$ for $1\leq i\leq n$. Further suppose $g$ is an element of 
$\cH'$. Let $W=W^1\cup W^2\cup\dots\cup W^n$, $\bP=\{p_1,\dots,p_n\}$ such 
that $p_i\geq 1$ for $1\leq i\leq n$. Choose $a\in\cH$ and define the 
functional $\Phi_{W,\bP}^{S\cup R}(f;a)$ on $\cH$ by 
$$\Phi_{W,\bP}^{S\cup R} (f;a) =\|Kf-g\|^2+\sum_{i=1}^n \sum_{\ga\in\Ga} 
w_{i,\ga}|f_\ga|^{p_i}+\|f-a\|^2- \|K(f-a)\|^2.$$
Also, define operators $\bS_{W,\bP}$ by 
\begin{equation*}
\bS_{W,\bP}(h)=\sum_{\ga\in\Ga} 
S_{(w_{1,\ga},\dots,w_{n,\ga}),(p_1,\dots,p_n)}(h_\ga) \var_\ga \tag{$19$}
\end{equation*}
with functions $S_{(w_{1,\ga},\dots,w_{n,\ga}),(p_1,\dots,p_n)}$ from $R$ to 
itself given by Lemma 3.10.

By these assumptions, we will have

A) $f_{\min}=minimizer$ of the functional $\Phi_{W,\bP}^{S\cup R} 
=\bS_{W,\bP}(a+K^*(g-Ka))$

B) for all $h\in\cH$, one has
$$\Phi_{W,\bP}^{S\cup R} (f_{\min}+h;a) \geq \Phi_{W,\bP}^{S \cup R}
(f_{\min};a)+\|h\|^2.$$
\end{proposition}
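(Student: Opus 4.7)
My plan is to adapt the proof of Proposition~2.2 to the case where every coordinate carries a genuine sum of $|x|^{p_i}$ penalties rather than being assigned to a single exponent $p_i$. The surrogate mechanism is formally identical, and the new shrinkage function $S_{(w_{1,\ga},\ldots,w_{n,\ga}),(p_1,\ldots,p_n)}$ from Lemma~3.10 is designed precisely so that coordinatewise minimization still works.

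For part (A), I would first rewrite
\[
\Phi^{S\cup R}_{W,\bP}(f;a)=\|Kf-g\|^2+\sum_{i=1}^n\sum_{\ga\in\Ga}w_{i,\ga}|f_\ga|^{p_i}+\|f-a\|^2-\|K(f-a)\|^2
\]
exactly as in Proposition~2.2, expanding the Hilbert-space terms to obtain
\[
\Phi^{S\cup R}_{W,\bP}(f;a)=\|f\|^2-2\langle f,a+K^*g-K^*Ka\rangle+\sum_{i=1}^n\sum_{\ga\in\Ga}w_{i,\ga}|f_\ga|^{p_i}+C(a,g),
\]
where $C(a,g)=\|g\|^2+\|a\|^2-\|Ka\|^2$ does not depend on $f$. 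Setting $b_\ga:=(a+K^*(g-Ka))_\ga$ and grouping per coordinate, the functional splits as
\[
\Phi^{S\cup R}_{W,\bP}(f;a)=\sum_{\ga\in\Ga}\Bigl(|f_\ga|^2-2b_\ga f_\ga+\sum_{i=1}^n w_{i,\ga}|f_\ga|^{p_i}\Bigr)+C(a,g).
\]
Each summand has exactly the form of the scalar function $M$ in Lemma~3.10 with $c_i=w_{i,\ga}$, so its minimum is attained at $f_\ga=S_{(w_{1,\ga},\ldots,w_{n,\ga}),(p_1,\ldots,p_n)}(b_\ga)$. Assembling these coordinates against $(\var_\ga)_{\ga\in\Ga}$ and invoking the definition (19) of $\bS_{W,\bP}$ yields $f_{\min}=\bS_{W,\bP}(a+K^*(g-Ka))$.

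For part (B), I would compute $\Phi^{S\cup R}_{W,\bP}(f_{\min}+h;a)-\Phi^{S\cup R}_{W,\bP}(f_{\min};a)$ exactly as in Proposition~2.2. The Hilbert-space portion contributes $2\langle h,f_{\min}-a-K^*(g-Ka)\rangle+\|h\|^2$, while the penalty portion contributes $\sum_\ga\sum_i w_{i,\ga}\bigl(|f_{\min,\ga}+h_\ga|^{p_i}-|f_{\min,\ga}|^{p_i}\bigr)$. It therefore suffices to show, coordinatewise, that
\[
\sum_{i=1}^n w_{i,\ga}\bigl(|f_{\min,\ga}+h_\ga|^{p_i}-|f_{\min,\ga}|^{p_i}\bigr)+2h_\ga\bigl(f_{\min,\ga}-b_\ga\bigr)\geq 0.
\]
This is exactly the first-order convexity inequality for the convex scalar function $x\mapsto x^2-2b_\ga x+\sum_i w_{i,\ga}|x|^{p_i}$ at its minimizer $f_{\min,\ga}$, combined with the $\|h\|^2$ term surviving from the Hilbert part.

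The main obstacle is handling the subdifferential at coordinates where $f_{\min,\ga}=0$ while some exponents satisfy $p_i=1$. At such points the stationarity condition $M'(f_{\min,\ga})=0$ becomes a subgradient inclusion $0\in\partial M(f_{\min,\ga})$, and I would need the case analysis that already appears in Lemma~3.10 (the set $B=\{i:p_i=1\}$ and the threshold $\tfrac12\sum_{i\in B}c_i$). Concretely, when $f_{\min,\ga}\neq 0$ I use $2(f_{\min,\ga}-b_\ga)=-\sum_i p_i w_{i,\ga}\Sign(f_{\min,\ga})|f_{\min,\ga}|^{p_i-1}$ and compare via the scalar convexity of each $|x|^{p_i}$; when $f_{\min,\ga}=0$, the characterization $|b_\ga|\le\tfrac12\sum_{i\in B}w_{i,\ga}$ from Lemma~3.10 lets me bound $|2h_\ga b_\ga|\le\sum_{i\in B}w_{i,\ga}|h_\ga|$, which is absorbed by the $p_i=1$ terms $w_{i,\ga}|h_\ga|$ on the left. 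Once this coordinatewise inequality is established, summing over $\ga$ and adding $\|h\|^2$ yields part (B).
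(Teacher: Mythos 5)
Your proposal is correct and follows essentially the same route as the paper: part (A) by coordinatewise splitting of the surrogate functional and applying the scalar minimization Lemma 3.10 to each term $|f_\ga|^2-2b_\ga f_\ga+\sum_i w_{i,\ga}|f_\ga|^{p_i}$, and part (B) by the coordinatewise first-order (sub)gradient inequality at the minimizer. The only difference is organizational — you split on $f_{\min,\ga}=0$ versus $f_{\min,\ga}\neq 0$ while the paper splits first on $B=\emptyset$ versus $B\neq\emptyset$ — but the underlying estimates (the identity $2(f_{\min,\ga}-b_\ga)=-\sum_i p_iw_{i,\ga}\Sign(f_{\min,\ga})|f_{\min,\ga}|^{p_i-1}$ off zero, and the threshold bound $|b_\ga|\leq\tfrac12\sum_{i\in B}w_{i,\ga}$ at zero) are exactly those used in the paper.
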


\begin{proof}
A)
$$\Phi_{W,\bP}^{S\cup R}(f;a)=\sum_{\ga\in\Ga} [f_\ga^2-2f_\ga 
(a+K^*g-K^*Ka)_\ga+\sum_{i=1}^n w_{i,\ga}|f_\ga|^{p_i} 
]+\|g\|^2+\|a\|^2-\|Ka\|^2$$
By Lemma 3.10, we have
\begin{align*}
f_{\min} &=\sum_{\ga\in\Ga} S_{(w_{1,\ga},\dots,w_{n,\ga}),(p_1,\dots,p_n)} 
((a+K^*g-K^*Ka)_\ga)\var_\ga\\
&=\bS_{W,\bP}(a+K^*g-K^*Ka)
\end{align*}

B)
\begin{multline*}
\Phi_{W,\bP}^{S\cup R} (f+h;a)-\Phi_{W,\bP}^{S\cup R} (f;a) \\
=\sum_{\ga\in\Ga} ((\sum_{i=1}^n w_{i,\ga}|f_\ga+h_\ga|^{p_i} 
-w_{i,\ga}|f_\ga|^{p_i})+2h_r (f-a-K^*g-K^*Ka)_\ga)+\|h\|^2
\end{multline*}
The cases $B=\phi$ and $B\neq\phi$ should be treated slightly differently.
\paragraph{Case 1:} For $B=\phi$, we have 
$$S(t)=t+\Sign t\f{\sum_{i=1}^n w_{i,\ga} p_i|t|^{p_i-1}}{2}$$
Then, 
$$(a+K^*g-K^*Ka)_\ga=f_\ga+\Sign f_\ga \f{\sum_{i=1}^n 
w_{i,\ga}p_i|f_\ga|^{p_i-1}}{2}$$
Therefore, 
$$2(f-a-K^*(g-K(a)))_\ga= -\Sign f_\ga (\sum_{i=1}^n 
w_{i,\ga}p_i|f_\ga|^{p_i-1})$$
for $f_\ga\neq 0$, there is $\al\in R$ such that $h_\ga=\al f_\ga$, then 
\begin{align*}
& \sum_{i=1}^n (w_{i,\ga}|f_\ga+h_\ga|^{p_i}- w_{i,\ga}|f_\ga|^{p_i})+2 h_\ga 
(f-a-K^*(g-Ka))_\ga \\
=&\sum_{i=1}^n (w_{i,\ga}|f_\ga+h_\ga|^{p_i} -w_{i,\ga}|f_\ga|^{p_i})- h_\ga 
\Sign f_\ga (\sum_{i=1}^n w_{i,\ga}p_i|f_\ga|^{p_i-1})\\
=& \sum_{i=1}^n (w_{i,\ga}|f_\ga+\al f_\ga|^{p_i} - w_{i,\ga}|f_\ga|^{p_i} - 
\al w_{i,\ga} p_i f_\ga \Sign f_\ga |f_\ga|^{p_i-1}) \\
=&\sum_{i=1}^n [w_{i,\ga}|f_\ga|^{p_i} (|1+\al|^{p_i}-1-p_i\al)]\geq 0
\end{align*}

If $f_\ga=0$, then
\begin{align*}
&\sum_{i=1}^n w_{i,\ga} |f_\ga+h_\ga|^{p_i} -w_{i,\ga}|f_\ga|^{p_i} +2h_\ga 
(f-a-K^*(g-Ka))_\ga\\
=&\sum_{i=1}^n w_{i,\ga} |f_\ga+h_\ga|^{p_i} -w_{i,\ga} |f_\ga|^{p_i} -p_i 
w_{i,\ga} h_\ga \Sign f_\ga|f_\ga|^{p_i-1} \\
=&\sum_{i=1}^n w_{i,\ga}|h_\ga|^{p_i} \geq 0
\end{align*}
\paragraph{Case 2:}  $B\neq \phi$.

Define now $\Ga_0=\{\ga\in\Ga: f_\ga=0\}$, and $\Ga_1=\Ga\backslash \Ga_0$. We 
will have
\begin{align*}
& \sum_{\ga\in\Ga_0} \left[\left( \sum_{i\in B} w_{i,\ga}|f_\ga+h_\ga| 
-w_{i,\ga}|f_\ga|\right) +\left( \sum_{i\not\in B} w_{i,\ga} 
|f_\ga+h_\ga|^{p_i} -w_{i,\ga}|f_\ga|^{p_i}\right) + 2h_\ga 
(f-a-K^*(g-Ka))_\ga\right]\\
=&\sum_{\ga\in\Ga_0} \left[ \left( \sum_{i\in B} w_{i,\ga}|h_\ga|-2h_\ga 
(a+K^*(g-Ka))_\ga\right) +\left( \sum_{i\not\in B} w_{i,\ga} 
|h_\ga|^{p_i}\right)\right]
\end{align*}
For $\ga\in\Ga_0$, $|a_\ga+[K^*(g-Ka)]_\ga| < \f{\sum_{i\in B}w_{i,\ga}}{2}$, 
so that 
$\sum_{i\in B} w_{i,\ga}|h_\ga| -2h_\ga (a_\ga+[K^*(g-Ka)]_\ga)\geq 0$.

If $\ga\in\Ga_1$, we distinguish two cases, acording to the sign of $f_\ga$. 
We discuss here only the case $f_\ga>0$; the similar case $f_\ga<0$ is left to 
the reader. For $f_\ga>0$, we have 
$$(a+K^*(g-Ka))_\ga= f_\ga+\f{\sum_{i\in B} w_{i,\ga} +\sum_{i\not\in B} 
w_{i,\ga} p_i|f_\ga|^{p_i-1}}{2}$$
Consequently 
\begin{align*}
& \sum_{\ga\in\Ga} \left[\left( \sum_{i\in B} w_{i,\ga} |f_\ga+h_\ga| 
-w_{i,\ga}|f_\ga|\right) +\left( \sum_{i\not\in B} w_{i,\ga} 
|f_\ga+h_\ga|^{p_i}-w_{i,\ga}|f_\ga|^{p_i}
 \right) +2h_\ga(f-a-K^*(g-Ka))_\ga\right]\\
=&\sum_{\ga\in\Ga} \sum_{i\in B}\left(
w_{i,\ga} |f_\ga +h_\ga| -w_{i,\ga} |f_\ga| 
-h_\ga w_{i,\ga}\right) \\
+& \sum_{\ga\in\Ga}\sum_{i\not\in B} \left( w_{i,\ga} |f_\ga+h_\ga|^{p_i} 
-w_{i,\ga} |f_\ga|^{p_i} -w_{i,\ga} p_i  h_\ga|f_\ga|^{p_i-1}\right)
\end{align*}
We have,
\begin{align*}
& \sum_{i\in B} w_{i,\ga} |f_\ga+h_\ga| -w_{i,\ga}|f_\ga|-h_\ga w_{i,\ga} \\
&=\sum_{i\in B} w_{i,\ga} \left( |f_\ga+h_\ga| -(f_\ga+h_\ga)\right) \geq 0
\end{align*}
Also, there is $\al\in R$ such that $h_\ga=\al f_\ga$, then 
\begin{align*}
& \sum_{i\not\in B} w_{i,\ga} |f_\ga+h_\ga|^{p_i} - w_{i,\ga} |f_\ga|^{p_i} 
-w_{i,\ga} p_i h_\ga |f_\ga|^{p_i-1}\\
=&\sum_{i\not\in B} w_{i,\ga} |f_\ga+\al f_\ga|^{p_i} -w_{i,\ga} |f_\ga|^{p_i} 
-\al w_{i,\ga} p_i f_\ga |f_\ga|^{p_i-1}\\
=& \sum_{i\not\in B} w_{i,\ga} |f_\ga|^{p_i} (|1+\al|^{p_i} -1-\al p_i)\geq 0.
\end{align*}
\end{proof}

\begin{lemma}
The operators $\bS_{W,\bP}$ are nonexpansive, i.e., 
$$\forall v,v'\in \cH, \| \bS_{W,\bP} (v)-\bS_{W,\bP}(v')\| \leq \|v-v'\|$$
\end{lemma}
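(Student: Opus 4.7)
The overall strategy mirrors part (A) of Lemma 2.8. Using orthonormality of $(\var_\ga)_{\ga\in\Ga}$ together with the diagonal form of $\bS_{W,\bP}$ in (19), one has
\begin{equation*}
\|\bS_{W,\bP}(v)-\bS_{W,\bP}(v')\|^2 = \sum_{\ga\in\Ga}\bigl|S_{(w_{1,\ga},\dots,w_{n,\ga}),(p_1,\dots,p_n)}(v_\ga) - S_{(w_{1,\ga},\dots,w_{n,\ga}),(p_1,\dots,p_n)}(v'_\ga)\bigr|^2,
\end{equation*}
while $\|v-v'\|^2=\sum_{\ga\in\Ga}|v_\ga-v'_\ga|^2$. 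Hence the lemma follows by term-by-term comparison once one establishes the scalar inequality
\begin{equation*}
|S(b)-S(b')|\leq |b-b'| \qquad \forall b,b'\in R
\end{equation*}
for the thresholding function $S=S_{(c_1,\dots,c_n),(p_1,\dots,p_n)}$ of Lemma 3.10, with arbitrary $c_i>0$ and $p_i\geq 1$.

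For the scalar estimate the cleanest route is the variational characterisation already supplied by Lemma 3.10: $S(b)$ is the unique minimizer of the strictly convex function $M_b(x)=x^2-2bx+G(x)$, where $G(x)=\sum_{i=1}^n c_i|x|^{p_i}$ is convex on $R$. The first-order optimality condition yields $2(b-S(b))\in\partial G(S(b))$ and $2(b'-S(b'))\in\partial G(S(b'))$, so monotonicity of the subdifferential of the convex function $G$ gives
\begin{equation*}
\bigl(2(b-S(b))-2(b'-S(b'))\bigr)\bigl(S(b)-S(b')\bigr)\geq 0,
\end{equation*}
which rearranges to $(b-b')(S(b)-S(b'))\geq (S(b)-S(b'))^2$. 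Cauchy--Schwarz applied to the left-hand side then delivers $|S(b)-S(b')|\leq|b-b'|$, completing the reduction.

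If one prefers an elementary argument in the spirit of [3], avoiding subdifferentials, an alternative is a direct case analysis driven by the piecewise formula for $S$. On each smooth branch $S$ coincides with $F^{-1}$, $F_1^{-1}$ or $F_2^{-1}$, and a direct differentiation shows that $F,F_1,F_2$ have derivative at least $1$ (the extra term contributes $\tfrac12\sum_i p_i(p_i-1)c_i|x|^{p_i-2}\geq 0$ for $p_i\geq 1$), so each branch is $1$-Lipschitz on its domain. The main obstacle in this elementary route, and the principal reason I would favour the convexity argument, is gluing the branches across the plateau $\{S\equiv 0\}$ in the case $B\neq\phi$: when $b$ and $b'$ lie on opposite sides of the plateau one has to bound $|S(b)-S(b')|=F_1^{-1}(b)-F_2^{-1}(b')$ by $|b-b'|$, which is achieved by the $1$-Lipschitz bounds on each branch together with continuity of $S$ at the endpoints $\pm\tfrac12\sum_{i\in B}c_i$ and the fact that the plateau, of total length $\sum_{i\in B}c_i$, absorbs that much of $|b-b'|$ at zero output cost. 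Both approaches rest on the strict convexity of $M_b$; the first line of attack simply makes this transparent in one line.
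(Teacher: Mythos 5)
Your proof is correct, but it takes a genuinely different route from the paper's. The paper proves the scalar estimate $|S(x)-S(x')|\leq|x-x'|$ by working directly with the piecewise formula from Lemma 3.10: for $B=\phi$ it notes $F'(x)\geq 1$ so $F^{-1}$ is $1$-Lipschitz, and for $B\neq\phi$ it runs an explicit six-case analysis according to the positions of $x,x'$ relative to the thresholds $\pm\tfrac12\sum_{i\in B}w_i$, handling the plateau-crossing cases by exactly the absorption argument you sketch as your ``alternative.'' Your primary argument instead invokes only the variational characterisation of $S(b)$ as the minimizer of $M_b(x)=x^2-2bx+G(x)$ with $G$ convex, and derives $(b-b')(S(b)-S(b'))\geq(S(b)-S(b'))^2$ from monotonicity of $\partial G$; this is the standard firm-nonexpansiveness property of proximal maps. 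What your approach buys is brevity, a strictly stronger conclusion (firm nonexpansiveness, not just nonexpansiveness), and independence from the explicit closed form of $S$ --- it would equally dispose of part (A) of Lemma 2.8 and would survive any further generalisation of the penalty so long as it stays convex. What the paper's approach buys is self-containedness at the level of elementary calculus, consistent with the style of [3], at the cost of the case bookkeeping. The only small points to tidy in your write-up: the appeal to ``Cauchy--Schwarz'' is just $st\leq|s||t|$ for reals, and you should note that the case $S(b)=S(b')$ is trivial before dividing by $|S(b)-S(b')|$; neither affects correctness.
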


\begin{proof}
As shown  by (19),
$$\|\bS_{W,\bP}(v)-\bS_{W,\bP}(v')\|^2= \sum_{\ga\in\Ga} 
|S_{(w_{1,\ga},\dots,w_{n,\ga}),(p_1,\dots,p_n)} (v_\ga)- 
S_{(w_{1,\ga},\dots,w_{n,\ga}),(p_1,\dots,p_n)}(v'_\ga)|^2$$
which means that it suffices to show that,  $\forall x,x'\in R$, and all 
$(w_1,\dots,w_n),(p_1,\dots,p_n)$ such that $w_i\geq 0$, $p_i\geq 1$ for 
$i=1,\dots,n$,
\begin{equation*}
\left| S_{(w_1,\dots,w_n),(p_1,\dots,p_n)} (x)- 
S_{(w_1,\dots,w_n),(p_1,\dots,p_n)}(x')\right| \leq |x-x'| \tag{$20$}
\end{equation*}
If $B=\phi$, then $S$ is the inverse of the function $F$. $F$ is 
differentiable and we have 
$$F'(x)=1+\f{\sum_{i=1}^n w_i p_i(p_i-1)|x|^{p_i-2}}{2}$$
thus 
$$|F'(x)|>1$$
and (20) follows immediately in this case.
For $B\neq\phi$, we use another argument. For the sake of definiteness, let us 
assume $x>x'$. We will just check all the possible cases:
\paragraph{Case 1:} $x,x'>\f{\sum_{i\in B} w_i}{2}$. In this case, we have
\begin{align*}
& S(x),S(x')>0, \quad S(x')<S(x) , \\
& x=S(x)+\f{\sum_{i\in B} w_i+ \sum_{i\not\in B} w_i p_i| S(x)|^{p_i-1}}{2},\\
& x'=S(x')+\f{\sum_{i\in B} w_i+\sum_{i\not\in B} w_ip_i |S(x')|^{p_i-1}}{2}.
\end{align*}
Then,
$$0<S(x)-S(x')=x-x'+\f{\sum_{i\not\in B} w_i 
p_i(|S(x')|^{p_i-1}-|S(x)|^{p_i-1})}{2}\leq x-x'.$$
\paragraph{Case 2:} $\f{-\sum_{i\in B} w_i}{2} \leq x,x'\leq \f{\sum_{i\in B} 
w_i}{2}$. In this case, we have
$$S(x)=S(x')=0.$$
Then, 
$$|S(x)-S(x')|=|0-0| =0 <|x-x'|.$$
\paragraph{Case 3:} $x,x'<\f{-\sum_{i\in B} w_i}{2}$. In this case, we have
\begin{align*}
& S(x),S(x')<0, \quad S(x')<S(x) , \\
& x=S(x)-\f{\sum_{i\in B} w_i+ \sum_{i\not\in B} w_i p_i| S(x)|^{p_i-1}}{2},\\
& x'=S(x')-\f{\sum_{i\in B} w_i+\sum_{i\not\in B} w_ip_i |S(x')|^{p_i-1}}{2}.
\end{align*}
Then,
$$0<S(x)-S(x')=x-x'+\f{\sum_{i\not\in B} w_i 
p_i(|S(x)|^{p_i-1}-|S(x')|^{p_i-1})}{2}< x-x'.$$
\paragraph{Case 4:} $x>\f{\sum_{i\in
 B} w_i}{2}, -\f{\sum_{i\in B}w_i}{2} \leq x'\leq \f{\sum_{i\in B} 
w_i}{2}$. In this case, we have
\begin{align*}
& S(x)>0, \\
& x=S(x)+\f{\sum_{i\in B} w_i+\sum_{i\not\in B} p_iw_i|S(x)|^{p_i-1}}{2},\\
& S(x')=0
\end{align*}
Then,
$$0<S(x)-S(x')=S(x)=x-\f{\sum_{i\in B} w_i+\sum_{i\not\in B} p_iw_i 
|S(x)|^{p_i-1}}{2} <x-\f{\sum_{i\in B}w_i}{2} \leq x-x'.$$
\paragraph{Case 5:} $-\f{\sum_{i\in B}w_i}{2}\leq x\leq \f{\sum_{i\in 
B}w_i}{2}, x'<-\f{\sum_{i\in B} w_i}{2}$. In this case, we have
\begin{align*}
& S(x')<0, \\
& x'=S(x')-\f{\sum_{i\in B} w_i+\sum_{i\not\in B} p_iw_i|S(x')|^{p_i-1}}{2},\\
& S(x)=0
\end{align*}
Then,
$$0<S(x)-S(x')=-S(x')=-x'-\f{\sum_{i\in B} w_i+\sum_{i\not\in B} p_iw_i 
|S(x')|^{p_i-1}}{2} <-\f{\sum_{i\in B}w_i}{2}-x' \leq x-x'.$$
\paragraph{Case 6:} $x>\f{\sum_{i\in B} w_i}{2}$, $x'<\f{-\sum_{i\in 
B}w_i}{2}$. In this case, we have 
\begin{align*}
& S(x)>0, S(x')<0 \\
& x=S(x)+\f{\sum_{i\in B} w_i+\sum_{i\not\in B} p_iw_i|S(x)|^{p_i-1}}{2},\\
& x'=S(x')-\f{\sum_{i\in B} w_i+\sum_{i\not\in B}w_i p_i|S(x')|^{p_i-1}}{2}.
\end{align*}
Then,
$$0<S(x)-S(x')=x-x'-\sum_{i\in B} w_i-\f{\sum_{i\not\in B}
w_ip_i (|S(x')|^{p_i-1}+|S(x')|^{p_i-1})}{2}<x-x'.$$
\end{proof}

\begin{lemma}
If for some $a\in\cH$ and some sequence $(v^n)_{n\in\BN}$, $w-\lim_{n\rig 
+\infty} v^n=0$ and $\lim_{n\rig+\infty} \|\bS_{W,\bP} 
(a+v^n)-\bS_{W,\bP}(a)-v^n\|=0$
 such that $p_i\in [1,2]$ for $i=1,\dots,n$ then 
$\|v^n\|\lo 0$ for $n\lo\infty$. 
\end{lemma}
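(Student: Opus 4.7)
The plan is to follow the strategy of part (C) of Theorem 2.15 and the $p=1$/$p>1$ case split in [3]. Since the multi-constraints operator $\bS_{W,\bP}$ now applies the mixed thresholding $\sig_\ga:=S_{(w_{1,\ga},\dots,w_{n,\ga}),(p_1,\dots,p_n)}$ of Lemma 3.10 at every coordinate $\ga$, the case analysis is governed by the global set $B=\{i:p_i=1\}$ rather than by a partition of $\Ga$. Set $\del^n_\ga:=\sig_\ga(a_\ga+v^n_\ga)-\sig_\ga(a_\ga)-v^n_\ga$; the hypothesis reads $\sum_\ga|\del^n_\ga|^2\to 0$, and weak convergence $v^n\to 0$ supplies a uniform bound $\|v^n\|\leq M$ together with $v^n_\ga\to 0$ for each $\ga$.

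If $B=\phi$, every $p_i>1$ and $\sig_\ga=F_\ga^{-1}$ is $C^1$ with $F'_\ga(x)=1+\f{1}{2}\sum_i w_{i,\ga}p_i(p_i-1)|x|^{p_i-2}$. On $\{|y|\leq\|a\|+M\}$ the weight bound $w_{i,\ga}\geq c$ yields a slack $\sig'_\ga(y)\leq 1-\eta$ for some $\eta=\eta(M,c,\bP)>0$ independent of $\ga$. The mean value theorem then gives $|\del^n_\ga|=|1-\sig'_\ga(\xi)|\,|v^n_\ga|\geq \eta|v^n_\ga|$, hence $\|v^n\|^2\leq\eta^{-2}\sum_\ga|\del^n_\ga|^2\to 0$, and we are done.

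If $B\neq\phi$, then $\sig_\ga\equiv 0$ on the thresholding interval $[-\tau_\ga,\tau_\ga]$ with $\tau_\ga=\f{1}{2}\sum_{i\in B}w_{i,\ga}\geq c|B|/2>0$. Fix $\del\in(0,c|B|/4)$. Since $a\in\cH$, the set $\Lambda_\del:=\{\ga:|a_\ga|\geq c|B|/2-\del\}$ is finite, and its complement satisfies $|a_\ga|<\tau_\ga-\del/2$. For $\ga\notin\Lambda_\del$ with $|v^n_\ga|<\del/2$, the point $a_\ga+v^n_\ga$ stays in $(-\tau_\ga,\tau_\ga)$, so $\sig_\ga(a_\ga+v^n_\ga)=\sig_\ga(a_\ga)=0$ and $|v^n_\ga|=|\del^n_\ga|$; this part contributes at most $\sum_\ga|\del^n_\ga|^2\to 0$ to $\|v^n\|^2$. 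The finite set $\Lambda_\del$ contributes $\sum_{\ga\in\Lambda_\del}|v^n_\ga|^2\to 0$ by coordinatewise $v^n_\ga\to 0$. The only indices not yet absorbed are those $\ga\notin\Lambda_\del$ with $|v^n_\ga|\geq\del/2$, whose cardinality is at most $4M^2/\del^2$ uniformly in $n$: when some $p_i>1$ lies outside $B$, the smooth exterior branch of $\sig_\ga$ is uniformly strictly contractive (by $w_{i,\ga}\geq c$) and gives $|v^n_\ga|\leq C|\del^n_\ga|$; when all $p_i=1$ and the exterior branch is a translation, one pigeonholes on the (index-varying) finite set of ``large-$v^n$'' indices, passes to subsequences, and applies finite-dimensional weak-to-strong convergence. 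Letting $\del\downarrow 0$ finishes the proof.

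The main obstacle is the kink of $\sig_\ga$ at $\pm\tau_\ga$ in the case $B\neq\phi$, where no pointwise uniform contractivity is available. The remedy, as sketched, is the three-way split isolating the non-smooth behavior to (i) the finite set $\Lambda_\del$, handled by finite-dimensional weak-to-strong upgrading, and (ii) a bounded-cardinality set of large-$v^n_\ga$ indices outside $\Lambda_\del$, handled either by uniform contractivity of the smooth exterior branch or, in the all-$p_i=1$ degenerate sub-case, by a pigeonhole/subsequence argument patterned on [3].
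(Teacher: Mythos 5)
Your case $B=\phi$ is correct and is essentially the paper's argument (a uniform lower bound $F'\geq 1+\eta$ on bounded sets, hence $|\del^n_\ga|\geq \f{\eta}{1+\eta}|v^n_\ga|$). The genuine gap is in the case $B\neq\phi$, in your third bucket of indices ($\ga\notin\Lambda_\del$ with $|v^n_\ga|\geq\del/2$) when all $p_i=1$. There you propose to ``pigeonhole on the index-varying finite set of large-$v^n$ indices, pass to subsequences, and apply finite-dimensional weak-to-strong convergence.'' That cannot work: weak-to-strong upgrading applies to a \emph{fixed} finite set of coordinates, while your exceptional sets $E_n$ move with $n$. A uniform cardinality bound $|E_n|\leq 4M^2/\del^2$ together with weak convergence gives no control of $\sum_{\ga\in E_n}|v^n_\ga|^2$ (take $v^n=Me_{\ga_n}$ with distinct $\ga_n$: every coordinate tends to $0$, $|E_n|=1$, yet the sum is constantly $M^2$). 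Tellingly, your argument for this sub-case never uses the hypothesis $\|\bS_{W,\bP}(a+v^n)-\bS_{W,\bP}(a)-v^n\|\lo 0$, which is the only thing that can exclude such escaping mass.

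The missing ingredient --- the heart of the paper's proof, and it works uniformly whether or not some $p_i>1$ lies outside $B$, so your internal case split is unnecessary --- is a quantitative lower bound on $|\del^n_\ga|$ for the coordinates that leave the dead zone. If $\ga$ lies outside the finite exceptional set (chosen so that $|a_\ga|<c/4$ there, $c$ being the uniform lower bound on $\sum_{i\in B}w_{i,\ga}$) and, say, $a_\ga+v^n_\ga>\tau_\ga$, then
$$|\del^n_\ga|\;\geq\; v^n_\ga-\sig_\ga(a_\ga+v^n_\ga)\;=\;-a_\ga+\f{\sum_{i\in B}w_{i,\ga}+\sum_{i\notin B}w_{i,\ga}p_i|\sig_\ga(a_\ga+v^n_\ga)|^{p_i-1}}{2}\;>\;-\f{c}{4}+\f{c}{2}\;=\;\f{c}{4},$$
and symmetrically on the other side. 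Hence $\sum_\ga|\del^n_\ga|^2\geq (c/4)^2\cdot\operatorname{Card}(E_n)$, and $\|\del^n\|\lo 0$ forces these sets to be empty for large $n$; all remaining coordinates either satisfy $|v^n_\ga|=|\del^n_\ga|$ (both points in the dead zone) or lie in the fixed finite set, where coordinatewise convergence suffices. Replacing your pigeonhole step by this estimate closes the proof. (Your other sub-case, some $p_i>1$ outside $B$, is essentially sound, but the contraction must be applied to $\sig_\ga$ globally --- via $|\del^n_\ga|\geq|v^n_\ga|-|\sig_\ga(a_\ga+v^n_\ga)-\sig_\ga(a_\ga)|$ and the fact that a continuous piecewise-$C^1$ function whose branch derivatives are all $\leq(1+\eta)^{-1}$ is globally $(1+\eta)^{-1}$-Lipschitz --- not just to the exterior branch, since $a_\ga$ and $a_\ga+v^n_\ga$ lie on different branches.)
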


\begin{proof}
The argument of the proof is slightly different for the cases $B=\phi$ and 
$B\neq\phi$. If $B=\phi$, then $S(x)=F^{-1}(x)$ and since $p_i\in [1,2]$, we 
have
$$F'(x)=1+\f{\sum_{i=1}^n w_{i,\ga} p_i(p_i-1)|x|^{p_i-2}}{2} 
<1+\f{\sum_{i=1}^n w_{i,\ga} p_i(p_i-1)}{2(2b)^{2-p_i}}$$
for $x\in [-b,b]$ where $b>0$. Therefore, in this case, we will have an 
argument similar to what has been done for case $p>1$ in [3].

For $B\neq\phi$, we define a finite set $\Ga_0\subs\Ga$ so that 
$\sum_{\ga\in\Ga\backslash \Ga_0} |a_\ga|^2\leq (\f{c}{4})^2$, where $c$ is 
the uniform lower bound on the $\sum_{i=1}^n w_{i,\ga}$. Because this is a 
finite set, the weak convergence of the $v^n$ implies that $\sum_{\ga\in\Ga_0} 
|v_\ga^n|^2\underset{n\lo\infty}{\lo}
0$,  so that we can concentrate on 
$\sum_{\ga\in\Ga\backslash \Ga_0} |v_\ga^n|^2$ only.

For each $n$, we split $\Ga_1=\Ga\backslash \Ga_0$ into two subsets: 
$\Ga_{1,n}=\{\ga\in\Ga_1: |v_\ga^n+a_\ga| \leq \f{\sum_{i\in B} 
w_{i,\ga}}{2}\}$ and $\tilde{\Ga}_{1,n}=\Ga_1 \backslash \Ga_{1,n}$. If 
$\ga\in\Ga_{1,n}$, then $S(a_\ga+v_\ga^n)= S(a_\ga)=0$ (since 
$|a_\ga|<\f{c}{4}\leq \f{\sum_{i\in B} w_{i,\ga}}{2})$, so that 
$|v^n_\ga-S(a_\ga+v^n_\ga)+S(a_\ga)|=|v_\ga^n|$. 
It follows that 
$$\sum_{\ga\in\Ga_{1,n}} |v_\ga^n|^2\leq \sum_{\ga\in\Ga} |v_\ga^n 
-S(a_\ga+v_\ga^n)+S(a_\ga)|^2\lo 0 \quad \text{as}\quad n\lo\infty.$$
It remains to prove only that the remaining sum, 
$\sum_{\ga\in\tilde{\Ga}_{1,n}} |v_\ga^n|^2$, also tends to 0 as $n\lo\infty$. 
If $\ga\in\Ga_1$ and $v_\ga^n+a_\ga> \f{\sum_{i\in B}w_{i,\ga}}{2}$, then 
\begin{align*}
|v_\ga^n-S(a_\ga+v_\ga^n)+S(a_\ga)| &=|v_\ga^n-S(a_\ga+v_\ga^n)| \\
&\geq v_\ga^n -S(a_\ga+v_\ga^n) \\
&=-a_\ga+\f{\sum_{i\in B} w_{i,\ga} +\sum_{i\not\in B} w_{i,\ga} 
p_i|S(a_\ga+v_\ga^n)|^{p_i-1}}{2}\\
&>-a_\ga+\f{\sum_{i\in B} w_{i,\ga}}{2} \\
&>-\f{c}{4}+\f{\sum_{i\in B} w_{i,\ga}}{2}>-\f{c}{4} +\f{c}{2}=\f{c}{4}.
\end{align*}
If $\ga\in\Ga_1$ and $v_\ga^n+a_\ga<\f{\sum_{i\in B} w_{i,\ga}}{2}$, then 
\begin{align*}
|v_\ga^n-S(a_\ga+v_\ga^n)+S(a_\ga)| &=|v_\ga^n-S(a_\ga+v_\ga^n)| \\
&\geq S(a_\ga+v_\ga^n) -v^n_\ga\\
&=a_\ga+\f{\sum_{i\in B} w_{i,\ga} +\sum_{i\not\in B} w_{i,\ga} 
p_i|S(a_\ga+v_\ga^n)|^{p_i-1}}{2} \\
&>a_\ga+\f{\sum_{i\in B} w_{i,\ga}}{2} \\
&>-\f{c}{4}+\f{\sum_{i\in B} w_{i,\ga}}{2}>-\f{c}{4}+\f{c}{2}=\f{c}{4}.
\end{align*}
This implies that 
$$\sum_{\ga\in\tilde{\Ga}_{1,n}} |v_\ga^n-S(a_\ga+v_\ga^n)+S(a_\ga)|^2 \geq 
(\f{c}{4})^2 Card (\tilde{\Ga}_{1,n});$$
since $\|v^n-\bS_{W,\bP}(a+v^n)+\bS_{W,\bP}(a)
 \| \underset{n\rig+\infty}{\lo} 0$, we know on 
the other hand that 
$$\sum_{\ga\in\tilde{\Ga}_{1,n}} |v_\ga^n-S(a_\ga+v_\ga^n)+S(a_\ga)|^2< 
(\f{c}{4})^2$$
when $n$ exceeds some threshold $N$, which implies that $\tilde{\Ga}_{1,n}$ is 
empty when $n>N$. Consequently, $\sum_{\ga\in\tilde{\Ga}_{1,n}}|v_\ga^n|^2=0$ 
for $n>N$. This completes the proof for the case $B\neq\phi$.
\end{proof}

\small

\end{document}